\newcommand{\Cijk}{CH^{i+j-k}(\Ymm{2k-i+1})}
\newcommand{\cijk}{C_j^{i,k}}
\newcommand{\Kbar}{\bar{K}}
\newcommand{\cij}{C_j^{i}}
\newcommand{\one}{\boldsymbol{1}}
\newcommand{\alb}{\operatorname{Alb}}
\newcommand{\dlp}{\frac{d}{d \log (p)}}
\newcommand{\XX}{\mathcal{X}}
\renewcommand{\O}{\mathcal{O}}
\newcommand{\Xgrph}{\Gamma(X)}
\newcommand{\red}{\operatorname{red}}
\newcommand{\rf}{\kappa}
\renewcommand{\O}{\mathcal{O}}
\newcommand{\Un}{\operatorname{Un}}
\newcommand{\ffm}{M_{\varphi,N,F^{\bullet}}(K)}
\newcommand{\myi}{\iota}
\newcommand{\mysplit}{\text{\ss}}
\newcommand{\gm}{\mathbb{G}_m}
\newcommand{\Ker}{\operatorname{Ker}}
\newcommand{\pair}[1]{\langle #1 \rangle}
\newcommand{\nill}{\operatorname{Nil}}
\newcommand{\myp}{\rho}
\newcommand{\DR}{\operatorname{DR}}
\newcommand{\res}{\operatorname{res}}
\newcommand{\ord}{\operatorname{ord}}
\newcommand{\Dst}{\operatorname{D}_{\textup{st}}}
\newcommand{\cst}{C_{\textup{st}}^\bullet}
\newcommand{\csp}{C_{\textup{st}}^{\prime\bullet}}
\newcommand{\bQ}{{\mathbb{Q}}}
\newcommand{\Q}{{\mathbb{Q}}}
\newcommand{\Z}{{\mathbb{Z}}}
\newcommand{\Zl}{{\mathbb{Z}_\ell}}
\newcommand{\Ql}{{\mathbb{Q}_\ell}}
\newcommand{\Qp}{{\bQ_p}}
\newcommand{\hst}{H_{\textup{st}}}
\newcommand{\hm}{H_{\mathcal{M}}}
\newcommand{\hdr}{H_{\textup{dR}}}
\newcommand{\hsyn}{H_{\textup{syn}}}
\newcommand{\rsyn}{\textup{reg}_{\textup{syn}}}
\newcommand{\reg}{\textup{reg}}
\newcommand{\regt}{\reg_t}
\newcommand{\isom}{\cong}
\newcommand{\acol}{\mathcal{O}_{\textup{Col}}}
\newcommand{\val}{\operatorname{val}}
\renewcommand{\L}{\mathcal{L}}
\newcommand{\coker}{\operatorname{CoKer}}
\newcommand{\nt}{\tilde{N}}
\newcommand{\hh}{\mathcal{H}}
\newcommand{\et}{\text{\'et}} \newcommand{\het}{H_{\et}}
\newcommand{\ntl}{\nt_\ell}
\newcommand{\gr}{\operatorname{gr}}
\newcommand{\Gal}{\operatorname{Gal}}
\newcommand{\Ymm}[1]{\bar{Y}^{(#1)}}
\newcommand{\Ym}{\Ymm{m}}
\def\hT(#1,#2,#3){H_{\mathcal{T}}^{#2}(#1,\Z(#3))}
\def\hmot(#1,#2,#3){H_{\mathcal{M}}^{#2}(#1,\Z(#3))}
\newtheorem{conjecture}{Conjecture}
\newtheorem{theorem}{Theorem}[section]
\newtheorem{lemma}[theorem]{Lemma}
\newtheorem{corollary}[theorem]{Corollary}
\newtheorem{proposition}[theorem]{Proposition}
\theoremstyle{definition}
\newtheorem{definition}[theorem]{Definition}
\theoremstyle{remark}
\newtheorem{remark}[theorem]{Remark}
\numberwithin{equation}{section}
\begin{document}

\title[Derivatives of Vologodsky functions]{Regulators and derivatives of Vologodsky functions with respect to
$\log(p)$}
\date{\today}
\author{Amnon Besser}
\address{
Ben-Gurion University of the Negev}
\email{bessera@math.bgu.ac.il}
\thanks{}
%    The 2020 edition of the Mathematics Subject Classification is
%    the current definitive version.
\subjclass[2020]{Primary }

\date{}

\begin{abstract}
  We describe several instances of the following phenomenon: In bad
  reduction situations the \( p \)-adic regulator has a continuous and a
  discrete component. The continuous component is computed using Vologodsky
  integrals. These depend on a choice of the branch of the \( p \)-adic
  logarithm, determined by \( \log (p) \). They can be differentiated with
  respect to this parameter and the result is related to the discrete
  component.
\end{abstract}

\maketitle

%    Figure insertion; default placement is top; if the figure occupies
%    more than 75% of a page, the [p] option should be specified.

\section{Introduction}
Let \( p \) be a finite prime. Let $K$ be a finite extension of the field $\Q_p$ of \( p \)-adic numbers and let 
$X/ K$ be a smooth variety.

The \( p \)-adic analogue of the Beilinson regulator into Deligne cohomology is the \emph{syntomic regulator} 
\begin{equation*}
  \rsyn: \hm^i(X,\Q(j)) \to \hsyn^i(X,j)
\end{equation*} 
\cite{Nek-Niz14}.

The syntomic regulator $\rsyn$ can sometimes be computed using Coleman or Vologodsky integration.
Let us consider a concrete example of the second algebraic K-theory group
$K_2(X)$ of a proper curve $X$. In this case the regulator takes the form
\begin{equation}\label{k2reg}
  \rsyn: \hm^2(X,\Q(2))\to
\hsyn^2(X,2)\;.
\end{equation}
Elements of the left-hand side of~\eqref{k2reg} are represented by formal sums of symbols
\( \{f,g\} \) for rational functions \( f,g \) on \( X \), such that all
their tame symbols are \( 0 \) at all points of \( X \).
When \( X \) has good reduction the group \( \hsyn^2(X,2) \) is isomorphic to the first algebraic de Rham cohomology group \( \hdr^1(X / K) \). In this case we proved
in~\cite{Bes98b} the following result.

\begin{theorem}\label{ktwogood}
To $\omega\in \hdr^1(X / K)$ associate $r_\omega = \omega \cup \rsyn$. Then,
for $\omega\in \Omega^1(X)$,
  \begin{equation}\label{regk2}
r_\omega(\{f,g\})= \int_{(f)} \log(g) \omega\;.
\end{equation}
\end{theorem}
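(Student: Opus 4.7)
The plan is to establish an explicit model for the syntomic cohomology of $X$ under good reduction in which the regulator on symbols becomes computable via Coleman primitives, and then to reduce the cup product with $\omega$ to a residue calculation.

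First I would fix a model for $\hsyn^\ast(X,\ast)$ in which classes are represented by pairs of differential-form data together with Coleman-type primitives carrying the Frobenius structure (for instance a modified de Rham complex, or the equivalent finite-polynomial cohomology setup). In the good reduction case the identification $\hsyn^2(X,2)\isom\hdr^1(X/K)$ becomes explicit at the level of these representatives, so that evaluating $r_\omega$ amounts to computing a concrete cup-trace pairing in the model.

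Next I would lift the symbol $\{f,g\}$ to such an explicit representative. Any invertible rational function $f$ determines a class in $\hsyn^1(X,1)$ represented by the pair $(df/f,\log f)$, where $\log$ is the branch of the $p$-adic logarithm fixed by the chosen value of $\log(p)$. Taking the syntomic cup product then produces a representative of $\rsyn(\{f,g\})$ of the shape $\bigl(df/f\wedge dg/g,\,\log(f)\,dg/g\bigr)$; the hypothesis that all tame symbols vanish is exactly what makes such a global representative available on $X$ despite the zeros and poles of $f$ and $g$.

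Finally I would compute $\omega\cup\rsyn(\{f,g\})$ by an explicit manipulation in the model. Writing $F_\omega$ for the Coleman primitive of $\omega$ and using the product rule to rewrite $\log(f)\,\omega$ as a total differential plus $\log(f)\, dF_\omega$, the trace map $\hsyn^3(X,3)\to K$ collapses to a residue sum on the rigid analytic generic fibre, giving $\sum_{P}\ord_P(f)\cdot\log(g)(P)\cdot F_\omega(P)$. This is exactly the meaning of $\int_{(f)}\log(g)\,\omega$, which is well-defined because $(f)$ has degree zero on the proper curve $X$.

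The main obstacle is to justify the residue computation rigorously: the functions $\log f$, $\log g$ and $F_\omega$ are Coleman, not rigid analytic, so one must work in an overconvergent framework where a Stokes-type theorem is available, and one must verify that the local contributions at points of $(f)$ really absorb the auxiliary terms at points of $(g)$ thanks to the vanishing of tame symbols. Once this local-to-global step is justified, matching the answer with $\int_{(f)}\log(g)\,\omega$ is essentially bookkeeping.
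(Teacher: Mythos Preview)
The paper does not prove this theorem here; it is quoted from~\cite{Bes98b}, so there is no in-paper proof to compare against. Your overall strategy---build an explicit complex computing $\hsyn^\ast(X,\ast)$, represent $\{f,g\}$ by Coleman data of the shape $(df/f\wedge dg/g,\ \log f\,dg/g)$, cup with $\omega$, and evaluate via a trace/residue formula on the rigid generic fibre---is indeed the shape of the argument in~\cite{Bes98b}, whose main tools are the local and global index pairings for Coleman functions and a Coleman-style reciprocity law.

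There is, however, a genuine error in your final step. The expression $\int_{(f)}\log(g)\,\omega$ means: choose a Coleman primitive $H$ of the one-form $\log(g)\cdot\omega$, i.e.\ $dH=\log(g)\,\omega$, and evaluate $H$ on the degree-zero divisor $(f)$. This is an \emph{iterated} Coleman integral. It is not equal to $\sum_P \ord_P(f)\,\log(g)(P)\,F_\omega(P)$: the latter is the divisor-evaluation of the product $\log(g)\cdot F_\omega$, whose differential is $\log(g)\,\omega + F_\omega\,dg/g$, not $\log(g)\,\omega$. (Your ``product rule'' sentence is also confused, since $\log(f)\,\omega$ already equals $\log(f)\,dF_\omega$.) In~\cite{Bes98b} the trace does not collapse to your simple sum; what actually drops out is a sum of local \emph{triple indices} $\pair{\log f, F_\omega; \log g}$ over residue discs---exactly the objects that reappear in this paper in Section~\ref{sec:toric}---and identifying that sum with the iterated integral $\int_{(f)}\log(g)\,\omega$ requires a reciprocity argument for Coleman functions, not bookkeeping. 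That reciprocity step is the substantive content of the proof and is absent from your outline.
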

The expression on the right-hand side of~\eqref{regk2} is the Coleman
integral (see Section~\ref{sec:colvol}) of the form \( \log (g)\cdot \Omega \) evaluated on the divisor \(
(f) \) of the rational function \( f \).
We note that this theorem is a special case of a more general formula for a general \( \omega\in \hdr^1(X /K) \).

When \( X \) only has semi-stable reduction over \( K \), we have an isomorphism
\begin{equation}\label{totosyn}
  \hsyn^2(X,2)\isom \hdr^1(X /K)\oplus B\;,
\end{equation} 
where \( B \) is a subset of the weight \( 2 \) part of \( \hdr^1(X / K) \)
(weight with respect to Frobenius, see Section~\ref{sec:syntomic}). Focusing for the moment on the first summand
in~\eqref{totosyn}
(though the second component will be our main focus) we proved
in~\cite{Bes18} an almost identical result by replacing Coleman integration with Vologodsky integration.
\begin{theorem}[{\cite[Corollary~1.3]{Bes18}}]\label{k2ssthm}
    Under some technical assumptions on the form \( \omega \), the first
    component of the syntomic regulator is computed by~\eqref{regk2},
    replacing the Coleman integral 
    with a Vologodsky integrals
\end{theorem}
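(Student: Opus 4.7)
The plan is to follow the proof strategy of Theorem~\ref{ktwogood} in the good reduction case, replacing Coleman integration with Vologodsky integration and carefully tracking the projection onto the first summand in~\eqref{totosyn}. The foundation is an explicit cochain-level description of the semi-stable syntomic cohomology of Nekov\'ar--Nizio\l: I would fix complexes computing $\hsyn^*(X,\ast)$ in terms of log-de Rham data on a semi-stable model together with a filtered $(\varphi,N)$-structure, and make the projection $\hsyn^2(X,2)\to \hdr^1(X/K)$ in~\eqref{totosyn} explicit at the cocycle level.

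With these complexes in hand, I would represent the class $\{f,g\}\in \hm^2(X,\Q(2))$ by the same type of cocycle used in \cite{Bes98b}: one built from $f$, $g$ and their logarithms, where the logarithms are now interpreted as Vologodsky functions depending on the branch parameter $\log(p)$. The cup product $\omega\cup \rsyn(\{f,g\})$ is then computed by a direct manipulation of these cocycles. Projecting to the $\hdr^1(X/K)$-summand and evaluating the resulting residue pairing should collapse the expression to the Vologodsky integral $\int_{(f)} \log(g)\omega$, in close formal analogy with the Coleman computation in the good reduction case. Most intermediate identities transfer verbatim once one checks that the replacement of Coleman by Vologodsky integration is compatible with the boundary maps of the semi-stable cochain complexes.

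The principal obstacle is the interaction between the cup product and the monodromy operator $N$, which is responsible for the extra summand $B$ in~\eqref{totosyn}. The technical assumptions on $\omega$ in the statement are presumably what allow one to conclude that the cocycle-level computation lands cleanly in the $\hdr^1(X/K)$-component: without them, the $B$-part could contribute to $\omega\cup \rsyn(\{f,g\})$ and obstruct the clean formula. Verifying both that the Vologodsky integral is well-defined on the divisor $(f)$, with a branch of $\log(p)$ consistent with the branch implicit in the syntomic cochains, and that the $B$-component indeed vanishes under the hypotheses on $\omega$, is where the bulk of the technical work resides. A secondary check is that the resulting value is independent of the auxiliary choices made in the cocycle representation of $\rsyn(\{f,g\})$, which should follow formally once the $N$-terms are controlled.
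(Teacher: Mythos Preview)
This theorem is not proved in the present paper at all: it is simply quoted as \cite[Corollary~1.3]{Bes18}, and the more precise restatement later (Theorem~\ref{k2sstwo}) is likewise only cited. So there is no proof here to compare against directly.

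That said, the paper does give substantial hints about how the argument in \cite{Bes18} actually goes, and it is quite different from your proposal. From the discussion in Section~\ref{sec:toric}, the regulator formula in \cite{Bes18} is obtained via the explicit expression
\[
\sum_v \pair{\log(f),F_\omega;\log(g)}_{(U_v-Z)^\dagger} - \sum_e c_\omega(e)\cdot \pair{\log(f),\log(g)}_e\;,
\]
which is a sum of \emph{local} contributions over the wide open pieces $U_v$ of a semi-stable covering and over the connecting annuli $e$. The ingredients are the triple index of \cite{Bes-deJ02}, the local index on annuli, and the harmonic cochain $c_\omega$ coming from the Besser--Zerbes comparison (Theorem~\ref{beszer}) between Vologodsky and Coleman integration. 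In other words, the proof in \cite{Bes18} works by reducing to Coleman integration on each $U_v$ (where the good-reduction methods of \cite{Bes98b} apply) and then assembling the pieces using the combinatorics of the dual graph; the ``technical assumption'' is that $\omega$ lies in the kernel of $N$, which is exactly what makes the annulus terms behave well.

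Your proposal, by contrast, is a top-down approach through the Nekov\'a\v{r}--Nizio\l complexes. This is not unreasonable in principle, but as written it has a genuine gap: you assert that one can ``represent the class $\{f,g\}$ by the same type of cocycle used in \cite{Bes98b}'' with logarithms now Vologodsky functions, and that ``direct manipulation'' will collapse to the integral. But the complexes in \cite{Nek-Niz14} are built from log-crystalline and log-de Rham data on the special fiber and the model, not from Vologodsky functions on the generic fiber; there is no evident slot into which a Vologodsky $\log(g)$ can be inserted as a cochain. Bridging that gap \emph{is} essentially the content of \cite{Bes18}, and it is done not by working in the abstract complexes but by the local decomposition above. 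So your sketch identifies the right obstacle (the interaction with $N$ and the $B$-summand) but does not supply the mechanism that actually resolves it.
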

See Theorem~\ref{k2sstwo} for a more precise statement.

Vologodsky~\cite{Vol01} integration extend Coleman integration to varieties
with bad reduction. Vologodsky integrals have, built into them, a branch of
the \( p \)-adic logarithm, determined by the branch parameter \( \log (p)
\). In fact, in Theorem~\ref{k2ssthm} the regulator also depends on
(essentially) the branch parameter.

One can wonder about the dependency of the Vologodsky integral on
the branch of the logarithm. We may even take the derivative of the
integral
with respect to the branch parameter \( \log (p) \). We can ask
\emph{what can we say about this derivative}. The goal of this note it to
argue that this is an interesting question since the resulting derivative
holds valuable arithmetic information.

The fundamental example of this phenomenon is the $p$-adic logarithm \(
\log (z) \), which is the Vologodsky integral of the differential form \(
dz / z \) on the punctured affine line.
Recall that the \( p \)-adic logarithm (sometimes called the Iwasawa
logarithm) of \( z\in K \) with \( |z|=1 \) is uniquely determined by
requiring that it has the usual power series expansion near \( 1 \) and
that it satisfies \( \log (ab)=\log (a)+\log (b) \). However, to extend it
to a homomorphism \( \log : K^\times \to K \) one needs to make a choice of
\( \log (p) \). Once such a choice has been made the extension of the
logarithm to \( K^\times  \) is given as follows:
If $\nu$ is the $p$-adic valuation normalized in such a way that $\nu(p)=1$, then
$$\log(z) = \log\left(\frac{z}{p^{\nu(z)}}\right) + \nu(z) \log(p)\;.$$
By differentiating with respect to \( \log (p) \) we get the fundamental
observation of this entire note:
\begin{equation}\label{key-thing}
\frac{d}{d
\log(p)} \log(z) = \nu(z)\;.
\end{equation} 
To see how this observation relates to the theory of \( p \)-adic
regulators, let us consider the simplest such (log syntomic) regulator, namely the one for
\( K_1 \) of a field \( K \), \( \hm^1(K,1) \). 
This takes the form of the map
\[
   K^{\times }\to \hst^1(K,\Q_p(1))=K\oplus \Q_p
\]
where \( \hst \) is semi-stable Galois cohomology~\cite{Nek93}.
By~\cite[1.35]{Nek93}
this regulator is given by
\[
x\mapsto (\log(x),\nu(x))\;.
\] 

We immediately observe the main theme of this note:
The regulator has 2 components, one ``continuous'' and one ``discrete''. The
former is computed using Vologodsky integration, The latter is the
derivative of the former with respect to $\log(p)$.
We will try to convince the reader that the phenomena that we point out is not
only interesting on its own, but is potentially useful as a tool, mostly
for computing discrete invariants using continuous tools.

To our knowledge, the first place this idea appears in any form is the work
of Colmez~\cite[Th\'eor\`eme~II.2.18]{Colm96}, where the local height at a
prime is found as the derivative with respect to \( \log (p) \) of the
appropriate Green function. We will prove something similar in
Proposition~\ref{ascolmez}. See 
Remark~\ref{colmremark} for the difference between the two approaches.

We will survey a number of examples where a similar picture emerges. In
Section~\ref{sec:colvol} we recall in brief the theories of Coleman and
Vologodsky, including its dependency on \( \log (p) \). Then, in
Section~\ref{sec:syntomic} We show, under fairly general conditions, that
the syntomic regulator for varieties with semi-stable reduction splits into
a continuous and discrete components, and that the discrete one is the
derivative of the continuous one with respect to \( \log (p) \). In
Section~\ref{sec:deriv} we explain how to compute the derivative with
respect to \( \log (p) \) of a Vologodsky integral in some cases, and give
an application to local height pairings on curves. Section~\ref{ss-loch} is
mostly based on~\cite{BMS21} and we explain the theory of \( p \)-adic
heights associated to adelic line bundles and how the local heights can be
computed using derivatives of Vologodsky functions. In
Section~\ref{sec:unipalb} we do a non-abelian analogue of
Section~\ref{sec:syntomic}, compute the continuous component of the
unioptent Albanese map using  Vologodsky integrals and the discrete part
using their derivatives with respect to \( \log (p) \). Finally, in
Section~\ref{sec:toric} we explain the relation with the theory of the
toric regulator developed in~\cite{Bes-Ras17}.

The presentation is not meant to be complete, and we will sometimes refer to
future work for details.

Notation: \( K \) is a finite extension of \( \mathbb{Q}_p \), unless
stated otherwise. In this case, its residue field \( \rf \) is finite and has \(
q=p^f \) elements. We denote by $K_0$ the maximal unramified extension of
$\Qp$ inside $K$.

I would like to thank the organizers of the Regulators V conference in
Pisa, who gave me the opportunity to present this material, both at the
conference and in writing. I would also like to thank Steffen M\"uller and
Padma Srinivasan, my coauthors on~\cite{BMS21,BMS23}, where the inspiration
for the ideas presented here grew, and for allowing me to report on some
results that will appear in~\cite{BMS23}. The author's research is
supported by grant number 2958/24 from the Israel Science foundation.

\section{Review of Coleman and Vologodsky integration}\label{sec:colvol}

Coleman integration is a theory of \( p \)-adic integration invented by
Robert Coleman and developed by him in~\cite{Col82,Col85,Col-de88}. It is
an integration theory (what of? A bit later) on certain ``overconvergent'' domains with good
reduction. Coleman called these: \emph{basic wide open spaces}, but a consistent
theory of Dagger spaces, associated to the Dagger algebras of Monsky and
Washnitzer~\cite{Mon-Was68}, was later developed by Grosse-Kl{\"o}nne~\cite{Grosse00}
and is the right ``home'' for this theory. If
\( X \) is a smooth variety over \( K \), then the associated rigid
analytic space can often be covered by such domains. These domains can be
chosen with good reduction even if \( X \) has bad reduction, and therefore
one can sometimes glue integrals over different domains. In general, there
will be some monodromy for the covering (see~\cite{Ber07} for a very
general theory along those lines). The Tannakian interpretation of Coleman
integration~\cite{Bes99} allows, among other things, an extension of the
theory to more general domains without the need for covering, in particular
to complete varieties, but still
assuming good reduction.

In the bad reduction case covering by overconvergent domains, if
they exist, will in general produce some monodromy. Nevertheless,
Vologodsky~\cite{Vol01} defines an integration theory for general varieties
with arbitrary reduction.

The integrals appearing in the theories of Coleman and Vologodsky are of
the following form: Let \( X \) be either an ``overconvergent domain'' for
Coleman integration or a smooth \( K \) variety for Vologodsky integration.
Let \( \omega \) be a closed one-form on \( X \). These would be
overconvergent analytic for Coleman or algebraic for Vologodsky. Then the
theory provides a primitive, A ``function'' \( F = \int \omega  \) on \( X \) in an
appropriate sense, so that \( dF=\omega \). The function \( F \) is a
locally analytic function on the \( \overline{K} \)-points, so that the
equation \( dF = \omega \) makes sense, and it is Galois equivariant. It is
a function on \( X \) in the Vologodsky case but is only a function on the
``underlying affinoid'' in Coleman's terminology (or the associated rigid spaces in Grosse-Kl{\"o}nne's
terminology).  Now, we can further multiply \( F \) by a second form \( \eta
\) and the theory gives an integral \( \int (F \eta) \), again unique up to
a constant, and so on (to be more precise, we can only integrate a
combination of such expressions which is again closed. The underlying
theory was developed in~\cite{Bes99} as Coleman originally only considered
one dimensional spaces, where this issue does not arise).
More concretely,
the integration
theory 
gives a $K$-algebra $\acol(X)\supset \O(X)$ of locally analytic
functions on $X$ such that there is a short exact sequence
\begin{equation*}
    0\to K \to \acol(X) \xrightarrow{d} \acol(X)\cdot \Omega^1(X)^{d=0}\to
    0
\end{equation*} 
and everything is functorial with respect to morphisms of ``spaces''.
All earlier described integrals are easily obtained as a consequence of
this short exact sequence. Definite, or more generally \emph{iterated}
integrals, are obtained from this theory just as in first year calculus.
For example \[
  \int_x^y \omega = F(y)-F(x)\;, \quad F = \int \omega\;.
\] 

Both Coleman integration, in its Tannakian interpretation~\cite{Bes99}, and
Vologodsky integration~\cite{Vol01}, have very similar constructions. Given
the space \( X \) one considers the category \( \Un(X) \) of unipotent
connections on \( X \). Each point \( x\in X(K) \) gives a fiber functor \(
\omega_x\) on \( \Un(X) \) associating with the connection the fiber of the
underlying vector bundle at \( x \). For each two points \( x,y\in X(K) \) one has a
Tannakian path space \( P_{x,y} \) of paths from \( x \) to \( y \), whose
\( A \)-valued points, for any \( K \)-algebra \( A \), are the
tensor isomorphisms between \( \omega_x \otimes A \) and \( \omega_y
\otimes A \). It turns out that this path space has a canonical (\( K
\)-linear) automorphism
\( \varphi \) (Frobenius) and, in the case of algebraic varieties over \( K
\), also a canonical ``monodromy'' \( N \), which is a derivation of the
associated algebra of regular functions. In both cases one proves
(\cite[Corollary~3.2]{Bes99} and~\cite[Theorem~B]{Vol01}) the existence of a canonical path
\( \gamma_{x,y} \in P_{x,y}(K) \). In both cases this path is Frobenius
invariant. In the overconvergent case this property completely
characterizes it. In the algebraic case this property is not sufficient and
one needs an additional property involving monodromy, which we do not
describe. The canonical paths are both invariant under composition, and
functorial in an obvious sense.

Following~\cite{Bes99} we can now define Coleman, respectively Vologodsky
functions. These are represented by fourtuples \( F=(M,\nabla ,v=v_x,s) \) where
\( (M,\nabla )\in \Un(X) \), \( v\in M_x \) and \( s: M\to \O_X \) is a
morphism from the underlying vector bundle. Here \( v_x \) is either chosen
at one fixed point \( x\in X(K) \) and transported via the canonical path
to all other points, or we can avoid a choice of a point by choosing at
each point in a way compatible with the \( \gamma \)'s. Obviously both ways
give the same thing. The function \( F \) can be evaluated at a point \( y
\), \[
  F(y) = s(v_y) = s(\gamma_{x,y}v_x)\;.
\] 
A morphism \( F\to F' \) is a morphism \( (M,\nabla )\to (M',\nabla') \)
compatible with \( s,v \) in the obvious way. If such a morphism exists,
the values of \( F \) and \( F' \) are identical at each point, and we
identify them. 

One key difference between Coleman and Vologodsky integration, is the
dependency of the latter on a branch of the \( p \)-adic logarithm. This is
manifested by having \( \gamma_{x,y} \) defined over the algebra \( K[\log
(p)] \) of polynomials in a formal variable \( \log (p) \) over \( K \). We
can fix the branch by evaluating at one particular value for \( \log (p)
\), but our interest is rather to study the dependency on \( \log (p) \).
This makes Vologodsky functions have values in \( K[\log (p)] \)
We will use the notation \( d / d \log (p) F \) to mean, more precisely,
the value of the derivative of the Vologodsky function \( F \) with respect
to \( \log (p) \) evaluated at \( \log (p)=0 \). Often, with \( \pi \) a
fixed uniformizer for \( K \), it will make more sense to consider the
derivative \( d / d \log (\pi) F\).

In the non-iterated case, Vologodsky integration can be described purely
using its functoriality properties. It was in fact defined
before~\cite{Zar96,Colm96}. The following is well known.
\begin{proposition}\label{abvar}
  If \( X \) is an abelian variety and let \( 0 \) be the identity element.
  Then, \( \int_{0}^x
  \omega\) is the logarithm of \( X \) to its Lie algebra,
  paired with the value of the invariant differential \( \omega \) at \( 0
  \). This suffices to determine \( \int \omega \).
  If \( X \) is a proper variety, and \( \alpha: X\to A=\alb(X) \) the map
  to its Albanese variety, then any \( \omega \in \Omega^1(X) \) is \(
  \alpha^\ast \omega' \) for some invariant differential \( \omega' \) on
  \( A \). and \( \int \omega = \alpha^\ast \int \omega' \).
\end{proposition}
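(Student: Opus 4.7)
The plan is to derive both statements from the functoriality of Vologodsky integration, encoded in the invariance of the canonical path $\gamma_{x,y}$ under morphisms of $K$-varieties, together with the fact that primitives are unique up to constants.

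For the abelian variety statement I would first apply functoriality to the multiplication map $m \colon A \times A \to A$ and the two projections $p_1,p_2 \colon A \times A \to A$. Invariance of $\omega \in \Omega^1(A)$ gives $m^\ast \omega = p_1^\ast \omega + p_2^\ast \omega$, so by functoriality the Vologodsky primitive $F = \int \omega$, normalized by $F(0)=0$, satisfies $F(x+y) = F(x) + F(y) + c$; evaluating at $(0,0)$ forces $c=0$. Hence $F$ is a locally analytic group homomorphism $A(\Kbar) \to \Kbar[\log(p)]$ with $dF = \omega$. On the formal group at the identity, where the exponential converges, $F$ agrees with the pairing of the formal $p$-adic logarithm against $\omega(0) \in \operatorname{Lie}(A)^\ast$ by uniqueness of analytic primitives vanishing at $0$. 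The homomorphism property extends this uniquely up to the ambiguity of a locally constant homomorphism $A(\Kbar)\to \Kbar$; that ambiguity is precisely the data of a branch of $\log$ on the Raynaud/Tate uniformization of $A$, which is exactly the $\log(p)$-dependence built into $\gamma_{x,y}$. Matching the two ambiguities identifies $F$ with $\langle \log_A(-),\omega(0)\rangle$.

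The sentence ``this suffices to determine $\int \omega$'' records the fact that every global regular $1$-form on an abelian variety is invariant, so the preceding description gives all indefinite integrals of regular $1$-forms; higher iterated integrals on $A$ then follow from the formalism of the algebra $\acol$ via the exact sequence recalled in the previous section. For the Albanese statement I would invoke the classical isomorphism $\alpha^\ast \colon \Omega^1(A) \xrightarrow{\sim} \Omega^1(X)$ for proper $X$ to write $\omega = \alpha^\ast \omega'$ for a unique invariant $\omega'$, and apply functoriality of the canonical path under $\alpha$ to conclude $\int \alpha^\ast \omega' = \alpha^\ast \int \omega'$, reducing the computation to the abelian case.

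The main obstacle is the precise identification in the abelian case of the branch-dependent part of $F$ with the branch-dependent part of $\log_A$. Checking this requires verifying that the $\log(p)$-dependence of the canonical path on $A$ is controlled exactly by the uniformization data, and reduces, on each analytic one-parameter subgroup, to the fundamental identity~\eqref{key-thing} applied to the uniformizing coordinate. All other ingredients (homomorphism property, Galois equivariance, matching derivative at the origin) are formal consequences of functoriality.
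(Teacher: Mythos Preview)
The paper does not actually give a proof of this proposition; it is stated as ``well known'' and immediately used to deduce Corollary~\ref{hol-indep} (independence of $\int\omega$ from $\log(p)$ for holomorphic $\omega$ on a proper variety). So the relevant question is whether your argument stands on its own.

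Your functoriality argument is the right one and gets you almost all the way: applying the canonical path to $m,p_1,p_2$ shows $F=\int_0^{(-)}\omega$ is a locally analytic homomorphism, and on the formal group it agrees with $\langle\log_A(-),\omega(0)\rangle$. The genuine error is in the last step. You claim the remaining ambiguity is ``a locally constant homomorphism $A(\Kbar)\to\Kbar$'' which you then try to match with a branch dependence of $\log_A$ coming from Raynaud/Tate uniformization. Both halves of this are wrong. First, there is \emph{no} ambiguity: the quotient of $A(\Kbar)$ by the formal-group points is torsion (every point lies in $A(L)$ for some finite $L/K$, and there the quotient by the kernel of reduction is finite), and any homomorphism from a torsion group to the torsion-free group $\Kbar$ is zero. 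Second, $\log_A$ for a \emph{proper} abelian variety is defined intrinsically by $\log_A(x)=\tfrac{1}{n}\log_A^{\mathrm{formal}}(nx)$ and carries no branch parameter; the uniformization picture you invoke produces a map to $\operatorname{Lie}(A)/\Lambda$, not to $\operatorname{Lie}(A)$, and is a different object.

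So the correct finish is simply: $F$ and $\langle\log_A(-),\omega(0)\rangle$ are two homomorphisms to a torsion-free target agreeing on a subgroup with torsion quotient, hence equal. In particular $F$ lands in $\Kbar$, not merely $\Kbar[\log(p)]$, and this is precisely how the paper deduces Corollary~\ref{hol-indep}. Your attempt to ``match the two ambiguities'' would, if it worked, make $F$ genuinely depend on $\log(p)$, contradicting that corollary. The Albanese part of your argument is fine.
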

This way of obtaining the integral is independent of \( \log (p) \).
Therefore, we obtain our second result on the derivative of Vologodsky
integrals with respect to \( \log (p) \).
\begin{corollary}[{\cite{BMS21}[Proposition~9.14]}]\label{hol-indep}
 Let \( \omega \) be a holomorphic form on a proper variety \( X \). Let \(
  x_0\in X(K)\). Then, \[
    \dlp \int_{x_0}^x \omega = 0\;.
  \]  
\end{corollary}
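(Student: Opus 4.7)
The plan is to invoke Proposition~\ref{abvar} and observe that its characterization of $\int \omega$ for holomorphic $\omega$ is intrinsic, making no appeal to the branch parameter $\log(p)$. Concretely, the Albanese map $\alpha: X \to A = \alb(X)$ (based at $x_0$) pulls back an invariant differential $\omega'$ on $A$ to our given $\omega$, and
\[
\int_{x_0}^{x} \omega \;=\; \int_{0}^{\alpha(x)} \omega' \;=\; \pair{\omega'(0),\, \log_A(\alpha(x))},
\]
where $\log_A : A(\Kbar) \to \operatorname{Lie}(A)\otimes \Kbar$ is the $p$-adic logarithm of the abelian variety.

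The core point I need to make is that $\log_A$ is defined without any branch choice. It is built from the formal group logarithm near the identity, which is a convergent power series with coefficients in $K$, and extended to all of $A(K)$ using divisibility of $A(K)$ modulo torsion (or equivalently, using that every point has a multiple in the formal group). Since $A$ is proper, there is no $\gm$-factor and no ``hidden'' branch ambiguity. Hence the formula above expresses $\int_{x_0}^x \omega$ as an element of $K$, not of $K[\log(p)]$. Differentiating with respect to $\log(p)$ then gives zero.

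The order of steps I would carry out: first recall the Albanese reduction from Proposition~\ref{abvar} to reduce to the case $X=A$ abelian and $\omega$ invariant; second, describe $\log_A$ via the formal group and extension by $\Q$-divisibility modulo torsion; third, observe Galois equivariance and continuity force the formal-group formula to agree with the Vologodsky primitive on a neighborhood of $0$, hence globally (both being Vologodsky functions with the same derivative $\omega$ and same value at $0$); fourth, conclude that the resulting element lies in $K$ and take $\dlp$.

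The only mild obstacle is step three: justifying that the group-theoretic $\log_A$ really coincides with the Vologodsky primitive, rather than differing by a $\log(p)$-dependent constant. This is resolved by uniqueness of the Vologodsky primitive vanishing at $0$ together with the fact that both functions are Frobenius compatible and agree on the residue disk at $0$, where the formal group gives a literal power series identity independent of $\log(p)$. Everything else is formal from Proposition~\ref{abvar}.
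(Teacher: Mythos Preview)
Your proposal is correct and follows essentially the same approach as the paper: the paper's argument is the single sentence preceding the corollary, namely that the description of $\int \omega$ via Proposition~\ref{abvar} (the abelian-variety logarithm pulled back along the Albanese map) is manifestly independent of $\log(p)$. Your write-up simply fleshes out why the formal-group logarithm on $A$ involves no branch choice and why it agrees with the Vologodsky primitive, which is exactly the content the paper leaves implicit.
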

Both Zarhin and Colmez extend their (non-iterated) theory to varieties
which are non-proper. In this case, just as for the logarithm, there is
indeed a dependency on \( \log (p) \). These theories coincide with
Vologodsky's more general integration theory.

\section{Syntomic regulators}\label{sec:syntomic}

We approach syntomic regulators via \'etale regulators. This has the
advantage of being able to treat, for the type of situations we are
considering, varieties with arbitrary reduction, without the need for the
sophisticated machinery of~\cite{Nek-Niz14}. We stress that for proving
some important results about syntomic regulatos, as well as for
computations, one is forced to get a much closer understanding of their
theory.

We assume that \( X \) is a proper smooth variety over \( K \). For every
finite prime \( \ell \) we 
have \'etale regulator maps
\begin{equation*}
  r_\ell: \hm^i(X,\Q(j)) \to \het^i(X,\Q_\ell(j))\;,
\end{equation*}
where the cohomology on the right-hand side is continuous \'etale cohomology
as defined by Jannsen~\cite{Jan88}. By a theorem of
Jannsen~\cite[Theorem~1.1]{Ras95}
The Hochschild-Serre spectral sequence,
\begin{equation}  \label{eq:spectral}
  E_2^{r,s}= H^r(K,\het^s(X\otimes \Kbar,\Q_\ell(j))) \Rightarrow
  \het^{r+s}(X,\mathbb{Q}_\ell(j))\;,
\end{equation}
degenerates at \( E_2 \)  Let \( \hm^i(X,\Q(j))_0 \) be the kernel of the composition
\begin{equation} 
  \hm^i(X,\Q(j)) \to  \het^{i}(X,\mathbb{Q}_\ell(j)) \to 
   H^0(K,\het^i(X\otimes \Kbar,\Q_\ell(j)))\;.
\end{equation} 
We get an induced map
\begin{equation}\label{eq:etreg}
  \hm^i(X,\Q(j))_0 \to
   H^1(K,\het^{i-1} (X\otimes \Kbar,\Q_\ell(j)))\;.
\end{equation}
At this point we restrict attention to the case \( \ell=p \).
One important consequence of~\cite{Nek-Niz14} is that the
map~\eqref{eq:etreg} is actually
into the subgroup of so called semi-stable classes,
\begin{equation}\label{eq:syndef}
  \rsyn: \hm^i(X,\Q(j))_0 \to
  \hst^1(K,
   \het^{i-1} (X\otimes \Kbar,\Qp(j)))\;.
\end{equation} 
Here, $V:= \het^{i-1}(X\otimes \Kbar,\Qp(j))$ is regarded as a
potentially semi-stable representation of
$G=\Gal(\Kbar/K)$ and $\hst^1(K,V)$
is semi-stable cohomology which may be interpreted as the group of
extensions in the
category of potentially semi-stable representation and may be computed in
terms of the complex $\cst(V)$ of~\cite[1.19]{Nek93} which we recall
shortly. The map \( \rsyn \) in~\eqref{eq:syndef} is our syntomic regulator. Notice that so far
this map is independent of any choice of branch.
\begin{remark}\label{toric-remark}
  For use in Section~\ref{sec:toric} we point out that when \( \ell \ne p \) the
  \'etale regulator lands in the group \( H_g^1 \) of unramified classes.
  Furthermore, there is an integral version.
\[
 \reg_\ell: \hmot(X,i,j)_0 \to
   H_g^1(K,\het^{i-1} (X\otimes \Kbar,\Z_\ell(j)))\;.
\] 
\end{remark}

We now analyze more carefully the target of the syntomic regulator using
the
complex $\cst(V)$. At this stage, \( V \) can be any de Rham
representation. Recall the Fontaine functors
$\Dst$ and $\DR$. For the Galois representation \( V \), $\Dst(V)$ is a
$K_0$-vector space, equipped with a linear nilpotent operator $N$ (called
monodromy) and a semi-linear (with respect to the unique lift of Frobenius
on $K_0$ operator $\varphi$ (Frobenius), satisfying the relation
\begin{equation}\label{enfi}
  N\varphi= p \varphi N\;.
\end{equation}
\( \DR(V) \) is a \( K \) vector space with a descending filtration \(
F^{\bullet}  \), and these two functors are related by an isomorphism, \[
  I_\pi: \Dst(V)\otimes_{K_0} K \to \DR(V)\;,
\] 
which depends on a choice of a uniformizer \( \pi \) of \( K \). The
dependency of the uniformizer is such that if \( \pi' \) is another
uniformizer, then
\begin{equation}\label{chofuni}
  I_{\pi'}= I_\pi \circ \exp (\log (\pi' / \pi )N) 
\end{equation} 
The collection of data \( (D=\Dst(V),D_K = \DR(V), I_\pi) \) with the above
conditions form a category, called the category of \emph{filtered Frobenius
monodromy modules}, \( \ffm \). The subcategory of \( \ffm \) coming from
semi-stable Galois representations is the abelian category of
\emph{admissible}
filtered Frobenius monodromy modules. The category \( \ffm \) has a canonical object \(
\one \) corresponds to the trivial one dimensional representation and
objects \( D \in \ffm \) have Tate twists \( D(n) \) corresponding to
shifting the filtration by \( n \) and multiplying \( \varphi \) by \( p^{-n}  \).
The Fontaine functors give an obvious functor
\begin{equation}\label{Fonfun}
  \mathbb{D}: \operatorname{Rep}_{\mathbb{Q}_p}(\Gal(\overline{K} / K)) \to
  \ffm\;.
\end{equation} 

 For a de Rham representation $V$
The complex \( \cst(V) \)
of~\cite[1.19]{Nek93} computing $\hst^\bullet(K,V)$
is given as follows:
\begin{equation}\label{sscompt}
  D \xrightarrow{(\varphi-1,N,-I_\pi)}D\oplus D \oplus D_K/F^0
  \xrightarrow{N+1-p\varphi+0} D\;. 
\end{equation}
Similarly, we have a complex \( \csp(V) \) given by
\begin{equation}\label{sscompp}
  D \xrightarrow{(\varphi-1,N)}D\oplus D 
  \xrightarrow{N+1-p\varphi} D\;. 
\end{equation}
A short exact sequence in \( \ffm \) is a sequence of morphisms, which
gives an exact sequence on the \( D \)'s and a strict exact sequence on the
\( D_K \)'s. The collection of isomorphism classes of extensions of two
fixed such objects form a group under the Baer sum.
\begin{lemma}\label{Yoneda}
  The group \( H^1(\cst(V)) \) classifies Yoneda exts of \( \one \) by \(
  \mathbb{D}(V) \), where \( \mathbb{D} \) is the Fontaine functor
  from~\eqref{Fonfun}.
  Similarly, the group
   \( H^1(\csp(V)) \) classifies Yoneda exts of \( \one \) by \(
  \Dst(V) \).
\end{lemma}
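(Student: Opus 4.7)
The plan is to construct an explicit bijection between $H^1(\cst(V))$ and the set of Yoneda equivalence classes of strict short exact sequences $0 \to \mathbb{D}(V) \to E \to \one \to 0$ in $\ffm$, and then check that addition of cohomology classes matches the Baer sum. The proof of the second statement is then identical after simply discarding the filtered datum, so that one is classifying extensions in the category of $(\varphi,N)$-modules over $K_0$.

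Starting from such an extension $E = (E_D, E_{D_K}, I_\pi^E)$, I would first choose a $K_0$-linear lift $e \in E_D$ of $1 \in K_0$ and, using strictness of the filtered piece together with the fact that $F^0 K = K$, a lift $e_K \in F^0 E_{D_K}$ of $1 \in K$. Setting
\[
a := (\varphi-1)(e), \qquad b := N(e), \qquad c := e_K - I_\pi^E(e \otimes 1),
\]
one has $a, b \in D$ because $\varphi-1$ and $N$ vanish on $K_0 = \one$, and $c \in D_K$ because both $e_K$ and $I_\pi^E(e \otimes 1)$ lift $1 \in K$. Applying the relation $N\varphi = p\varphi N$ to $e$ immediately yields the cocycle identity $Na + (1-p\varphi)b = 0$. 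Replacing $e$ by $e + x$ for $x \in D$ modifies $(a,b,c)$ by exactly $((\varphi-1)x, Nx, -I_\pi(x))$, which is a coboundary for $\cst(V)$, while replacing $e_K$ by another lift in $F^0 E_{D_K}$ changes $c$ inside $F^0 D_K$. The resulting class $[a,b,c] \in H^1(\cst(V))$ is therefore well-defined.

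For the inverse, given a 1-cocycle $(a,b,c)$, I would define $E_D := D \oplus K_0 \cdot \tilde 1$ with $\varphi(\tilde 1) := \tilde 1 + a$ and $N(\tilde 1) := b$---the cocycle condition being precisely what makes $N\varphi = p\varphi N$ hold on $\tilde 1$---then set $E_{D_K} := D_K \oplus K \cdot \bar 1$, extend $I_\pi$ to $I_\pi^E$ by $\tilde 1 \otimes 1 \mapsto \bar 1$, and equip $E_{D_K}$ with the filtration $F^i E_{D_K} := F^i D_K$ for $i > 0$ and $F^0 E_{D_K} := F^0 D_K + K \cdot (\bar 1 - \tilde c)$, where $\tilde c \in D_K$ is any lift of the class $c$. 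Different lifts of $c$ produce Yoneda-equivalent extensions, and a direct verification shows these two constructions are mutually inverse, with Baer sum corresponding to addition of cocycles. The argument is essentially bookkeeping, and the main obstacle is keeping clear the interaction between the $K_0$-splitting of $E_D$, the strict filtered splitting of $E_{D_K}$, and the comparison isomorphism $I_\pi^E$, since the three components of the cocycle all feed through this triangle and one must check that the coboundary image of a single $x \in D$ accounts for every possible change of splitting data.
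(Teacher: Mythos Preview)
Your argument is correct and follows essentially the same route as the paper: lift $1$ to $e$ in the $(\varphi,N)$-module and to $e_K$ in $F^0$ of the filtered module, form the triple $((\varphi-1)e,\,Ne,\,e_K - I_\pi(e))$, and check that changes of lift are exactly the coboundaries. You are in fact more thorough than the paper, which leaves the inverse construction and the Baer sum compatibility to the reader.
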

\begin{proof}
  Let \begin{equation}\label{sample-ext}
  0\to \mathbb{D}(V) \to \mathbb{D}' \to \one \to 0
.\end{equation} 
  be an extension in \( \ffm \). We can find lifts, \( A\in D' \) of \( 1\in
  \one \) and \( B \in F^0 D_K^\prime \) of \( 1\in \one_K \),
  the latter since \( 1\in F^0 \one_K \) and the morphisms are
  strict. We associate with the pair \( (A,B) \) the triple
  \[
  (x,y,z)=((\varphi-1)A,NA,B-I_\pi(A))\in D\oplus D\oplus D_K /F^0\;.
  \]
  We quotient
  by \( F^0 \) to account for the fact that \( B \) is only determined up
  to an element of \( F^0 D_K \). This element is clearly
  in the kernel of \(  
  N+1-p\varphi 
  \).  When we add to  \( A \) an element \( w\in D \) we add to \( (x,y,z)
  \) the element  \( 
  (\varphi-1,N,-I_\pi)(w)
  \). Thus, an extension gives a well-defined element of \( H^1(\cst(V)) \).
  Conversely, it is easy to construct an extension from such an element.
  The second statement follows similarly (and more easily).
\end{proof}
\begin{proposition}
  Suppose \( D^{\varphi =1, N=0}=0  \). Then
 we have a short exact sequence,
  \begin{equation}\label{eq:shorty}
    0 \to D_K / F^0 \xrightarrow{\myi} H^1(\cst(V)) \xrightarrow{\myp}
    H^1(\csp(V)) \to 0\;,
 \end{equation}  
  given by \( \myi(z)=(0,0,z) \) and \( \myp (x,y,z)= (x,y) \). 
\end{proposition}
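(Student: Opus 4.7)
The plan is to verify exactness directly from the explicit descriptions of the two complexes. First I would unpack that, using the relation $N\varphi = p\varphi N$ to read the second differential as $(x,y,z)\mapsto Nx+(1-p\varphi)y$ (the third component does not enter), a cocycle of $\cst(V)$ is a triple $(x,y,z)\in D\oplus D \oplus D_K/F^0$ with $Nx+(1-p\varphi)y=0$, modulo coboundaries $((\varphi-1)w, Nw, -I_\pi(w) \bmod F^0)$ for $w\in D$, while a cocycle of $\csp(V)$ is a pair $(x,y)$ satisfying the same relation modulo coboundaries $((\varphi-1)w,Nw)$. From this it is immediate that $\myi$ lands in cocycles (the relation is trivially satisfied by $(0,0,z)$), that $\myp$ sends cocycles to cocycles and coboundaries to coboundaries, and that $\myp\circ\myi=0$.

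Next I would check surjectivity of $\myp$: given a cocycle $(x,y)$ of $\csp(V)$, the triple $(x,y,0)$ is a cocycle of $\cst(V)$ that maps to it. For exactness in the middle, I would take a class $[(x,y,z)]\in\ker\myp$, write $(x,y)=((\varphi-1)w,Nw)$ for some $w\in D$, and subtract the $\cst(V)$-coboundary associated to $w$ from $(x,y,z)$; the result is a cohomologous cocycle of the form $(0,0,z')$, which is $\myi(z')$.

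The hypothesis $D^{\varphi=1,N=0}=0$ enters only in the injectivity of $\myi$: if $\myi(z)$ vanishes in $H^1(\cst(V))$, there exists $w\in D$ with $(\varphi-1)w=0$, $Nw=0$, and $z\equiv -I_\pi(w)\pmod{F^0}$; the first two equations force $w\in D^{\varphi=1,N=0}$, which vanishes by assumption, so $w=0$ and hence $z=0$. The entire argument is routine bookkeeping with explicit cocycle representatives, so I do not anticipate any real obstacle — the hypothesis is tailor-made to give injectivity at exactly the one spot that needs it.
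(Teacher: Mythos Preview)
Your proof is correct and follows essentially the same approach as the paper's own proof. The paper's argument is terse --- it simply says the statement is ``more or less clear'' and singles out the one nontrivial point, namely that the hypothesis $D^{\varphi=1,N=0}=0$ forces any element $(0,0,z)$ in the image of the first differential to have $z=0$, which is exactly your injectivity step; you have just spelled out the routine verifications that the paper leaves implicit.
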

\begin{proof}
This is more or less clear. The only point to note is that since 
  \( D^{\varphi =1, N=0}  \) is the kernel of the first two coordinates of
  the first differential, the
  assumptions imply that an element \( (0,0,z) \) can not be in the image
  of the first differential.
\end{proof}
\begin{remark}
  It is easy to interpret the short exact sequence~\eqref{eq:shorty} as follows:
  The projection forgets the \( D_K \) data leaving an extension of
  Frobenius monodromy modules. The injection gives all possible filtration
  on the trivial extension of Frobenius monodromy modules.
\end{remark}
In order to split the short exact sequence~\eqref{eq:shorty} we now restrict
attention to $V:= \het^{i-1}(X\otimes \Kbar,\Qp(j))$ for \( X \) with
semi-stable reducton. 
By the semi-stable conjecture of Fontaine, proved by Tsuji~\cite{Tsu99}
the associated \( D=\Dst(V) \) is isomorphic to Hyodo-Kato cohomology of
the special fiber (viewed as a log-scheme) while \( D_K = \DR(V) \) is isomorphic to the de Rham
cohomology \( \hdr^{i-1}(X /K)  \), both twisted by \( j \). Consequently, 
by~\cite{Mok93}
there is a weight decomposition, \[
    D= \oplus_k D^k\;,
\]
where the linear
Frobenius $\phi=\varphi^f$ operates on $D^i$  with eigenvalues which are
Weil numbers of weight $i$. The relation~\eqref{enfi} immediately implies
that \( N D^i \subset D^{i-2} \).
\begin{proposition}\label{twocansplit}
  Suppose either \( D^0=0 \) or that \( N:D^0 \to D^{-2}  \) is an
  isomorphism. Then, the sequence~\eqref{eq:shorty} canonically splits and
  we have \( H^1(\csp(V)) \isom D^{-2}  \) if \( D^0=0 \) or \( 0 \) if \( N \) is an
  isomorphism.
\end{proposition}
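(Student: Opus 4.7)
The plan is to exploit the weight decomposition $D = \oplus_k D^k$ (preserved by $\varphi$ since it commutes with $\phi = \varphi^f$, while $N: D^k \to D^{k-2}$) to reduce any cycle in $\csp(V)$ to a very small normal form. Two numerical inputs are crucial: on $D^k$ the operator $\varphi - 1$ is a $\Qp$-linear bijection precisely when $k \ne 0$, because a $\varphi$-fixed vector would give a $\phi$-eigenvalue equal to $1$, forcing weight $0$; and $1 - p\varphi$ is a bijection precisely when $k \ne -2$, since the factor $p$ shifts the Weil weight contribution by $+2$ and a vanishing element would place a $\phi$-eigenvalue at $q^{-1}$. With these observations in place, both cases of the proposition will follow from manipulation of the resulting normal form.

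Given a cycle $(x,y)$ with $Nx + (1-p\varphi)y = 0$, I would first use invertibility of $\varphi - 1$ on each $D^k$ with $k \ne 0$ to choose $v_k \in D^k$ with $(\varphi - 1)v_k = x_k$, and subtract the boundary $((\varphi-1)v, Nv)$ for $v = \sum_{k \ne 0} v_k$, placing $x$ in $D^0$. Splitting the cycle condition weight by weight, invertibility of $1 - p\varphi$ on $D^m$ for $m \ne -2$ forces $y_m = 0$, so the normal form is a pair $(x_0, y_{-2}) \in D^0 \oplus D^{-2}$ satisfying $Nx_0 + (1-p\varphi)y_{-2} = 0$, with residual boundary freedom only from $v_0 \in D^0$. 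If $N: D^0 \to D^{-2}$ is an isomorphism, surjectivity solves $Nv_0 = y_{-2}$, and the identity $N(\varphi-1) = -(1-p\varphi)N$ (immediate from $N\varphi = p\varphi N$) shows the adjusted cycle becomes $(x_0 - (\varphi-1)v_0, 0)$ with $N(x_0 - (\varphi-1)v_0) = 0$; by injectivity of $N$ on $D^0$ this equals $(0,0)$. Hence $H^1(\csp(V)) = 0$ and the sequence splits trivially.

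If instead $D^0 = 0$, the normal form collapses to $y_{-2} \in D^{-2}$ with $(1-p\varphi)y_{-2} = 0$ and no further identifications, and the identification $H^1(\csp(V)) \cong D^{-2}$ follows from the purity input that $1 - p\varphi$ annihilates the weight $-2$ piece (natural in the geometric semi-stable setting, where the Tate-twisted weight $-2$ part has $\phi = q^{-1}$). The canonical section $s: H^1(\csp(V)) \to H^1(\cst(V))$ would then be $y_{-2} \mapsto [(0, y_{-2}, 0)]$, well defined because $\ker(\varphi - 1) \subseteq D^0 = 0$ leaves no $(\varphi-1, N, -I_\pi)$-boundary capable of altering the third coordinate. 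The main obstacle is the identification $H^1(\csp(V)) \cong D^{-2}$ in the case $D^0 = 0$: it demands that $1 - p\varphi$ vanish on the whole of $D^{-2}$, not merely on the subspace where cycles live, and I would address this by invoking Mokrane's purity for the weight $-2$ graded piece of the Hyodo-Kato cohomology of $X$, after which the normal-form reduction and the splitting itself are essentially formal.
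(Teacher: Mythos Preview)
Your normal-form reduction via the weight decomposition is exactly the paper's argument: reduce $x$ to $D^0$ using invertibility of $\varphi-1$ off weight $0$, then in the case $N:D^0\to D^{-2}$ is an isomorphism kill $y$ and deduce $x=0$ by injectivity of $N$, while in the case $D^0=0$ observe that $D^{\varphi=1}\subset D^0=0$ so the first differential vanishes and the splitting is immediate. Your explicit section $y_{-2}\mapsto [(0,y_{-2},0)]$ agrees with the paper's description in Remark~\ref{forproof}.

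The one place you diverge is the final identification. Your own reduction shows only that $H^1(\csp(V))\cong D^{p\varphi=1}$, the subspace of $D^{-2}$ where $\varphi$ acts by $p^{-1}$; this is precisely what the paper's proof obtains as well (it writes $H^1(\cst(V))\cong D^{p\varphi=1}\oplus D_K/F^0$). The proposition's phrasing ``$\cong D^{-2}$'' is loose, consistent with the introduction's description of $B$ as ``a subset of the weight $2$ part''. Your proposed fix, invoking a purity statement to force $1-p\varphi=0$ on all of $D^{-2}$, is not available in general: the weight $-2$ piece has $\phi$-eigenvalues of complex absolute value $q^{-1}$, but these need not equal $q^{-1}$ exactly (already for $\het^2$ of a surface one sees products of weight-$1$ Frobenius eigenvalues), so $p\varphi$ need not act as $1$ on $D^{-2}$. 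Drop the purity appeal and state the conclusion as $H^1(\csp(V))\cong D^{p\varphi=1}\subset D^{-2}$; the splitting, which is what matters for the rest of the paper, is unaffected.
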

\begin{proof}
Consider a triple \( (x,y,z) \) representing a class in \( H^1(\cst(V))
  \). It satisfies the relation \( Nx = (p\varphi -1) y \). Decompose \[
    x = \sum x_k, \quad x_k \in D^k\;.
  \] 
It is easy to see, by weight considerations, that \( \varphi-1 \) is
  invertible on \( D^k \) when \( k\ne 0 \). Let then,
for \( k\ne 0 \), \( w_k = (\varphi-1)^{-1}x_k  \)
  Suppose first that \( D^0=0 \). Then, \( x_{0}=0 \) and we see that by subtracting the
  differential of \( w=\sum w_k \), we can make \( x=0 \). This makes \(
  H^1(\cst(V) \) isomorphic to \( H^1 \) of the complex \[
  D^{\varphi=1}  \xrightarrow{(N,-I_\pi)} D \oplus D_K/F^0
  \xrightarrow{1-p\varphi+0} D
  \] 
  and, since \( D^{\varphi=1}\subset D^0=0  \), to just \( D^{p\varphi = 1}
  \oplus D_K / F^0  \) and the map \( \myi \) is just the embedding of the first
  factor, and is clearly canonically split. Suppose now that \( N: D^0 \to
  D^{-2} \) is an isomorphism. Since we are not assuming that \( D^0=0 \)
  we can only assume that \( x\in D^0 \), hence \( y\in D^{-2}  \). By the
  assumption on \( N \) we can now modify by the differential of \( w' \)
  to make \( y=0 \), but then \( N x =0 \) so that \( x=0 \), again by
  assumption. This proves the result in the second case.
\end{proof}
\begin{remark}\label{forproof}
  In terms of of the proof of Lemma~\ref{Yoneda} the splitting in the case
  \( D^0=0 \) is given as follows: Given an extension \( \varepsilon \) as
  in~\eqref{sample-ext}
    Lift \( 1\in \one
  \) to the unique \( A\in D' \) which is Frobenius invariant (this is
  equivalent to having \( x=0 \) in the proof of Proposition~\ref{twocansplit}). Then send the
  extension to \( B- I_\pi(A) \) as before. Note that since \( A \) is
  Frobenius invariant we have \( \myp(\varepsilon)= N A \)
\end{remark}
\begin{definition}\label{tworegdef}
  Suppose \( D^0=0 \) and let \( \mysplit_\pi :\hst^1(G,V)\to D_K / F^0 \) be the canonical
  splitting of~\eqref{eq:shorty}. For a
  variety \( X \), with
$V= \het^{i-1}(X\otimes \Kbar,\Qp(j))$,
let
\begin{align*}
  \rsyn^{c,\pi} &:
   \hm^i(X,\Q(j))_0 \to D_K / F^0 \\
  \rsyn^{d} &:
   \hm^i(X,\Q(j))_0 \to D^{-2} \;,
\end{align*}
called the continuous and discrete
  component of the syntomic regulator respectively, be the compositions of the syntomic
  regulator \( \rsyn \) from~\eqref{eq:syndef} followed by the splitting \(
  \mysplit_\pi \) and the projection \( \myp \) respectively.
\end{definition}
\begin{theorem}\label{synderi}
  For any extension \( \varepsilon \) we have \[
    \frac{d}{d \log (\pi)} \mysplit_\pi (\varepsilon) = - I_\pi\circ \myp
    (\varepsilon)\;.
  \] 
Consequently, we have
  $ \frac{d}{d \log (\pi)} \rsyn^{c,\pi} =
  - \rsyn^{d}$ 
\end{theorem}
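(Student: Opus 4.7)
The plan is to apply the explicit recipe for $\mysplit_\pi$ laid out in Remark~\ref{forproof}, then differentiate directly, using only the change-of-uniformizer formula~\eqref{chofuni} for $I_\pi$. Fix an extension $\varepsilon$ of the form~\eqref{sample-ext}. Following Remark~\ref{forproof}, choose the unique Frobenius-invariant lift $A \in D'$ of $1 \in \one$ and any lift $B \in F^0 D_K'$ of $1 \in \one_K$; then $\mysplit_\pi(\varepsilon) = B - I_\pi(A)$ in $D_K/F^0$, while $\myp(\varepsilon) = NA$.

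The key observation is that neither $A$ nor $B$ depends on the choice of uniformizer: the Frobenius $\varphi$ on the $K_0$-vector space $D'$ is a $\pi$-free datum, so the uniqueness condition pinning down $A$ is as well, and similarly the filtration on $D_K'$ that lets us pick $B \in F^0$ does not involve $\pi$. Consequently, all of the $\pi$-dependence in the formula for $\mysplit_\pi(\varepsilon)$ is concentrated in $I_\pi$, and $\frac{d}{d\log\pi}\mysplit_\pi(\varepsilon) = -\bigl(\tfrac{d}{d\log\pi} I_\pi\bigr)(A)$ in $D_K/F^0$.

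To finish the main identity I would compute $\frac{d}{d\log\pi} I_\pi$. From~\eqref{chofuni}, for any other uniformizer $\pi'$, $I_{\pi'} = I_\pi \circ \exp(\log(\pi'/\pi) N)$; since $N$ is nilpotent the right-hand side is polynomial in $\log\pi'$, and differentiating at $\pi' = \pi$ gives $\frac{d}{d\log\pi} I_\pi = I_\pi \circ N$. Combined with the previous paragraph and the equality $\myp(\varepsilon) = NA$, this yields $\frac{d}{d\log\pi}\mysplit_\pi(\varepsilon) = -I_\pi(NA) = -I_\pi \circ \myp(\varepsilon)$, proving the first assertion.

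For the ``consequently'' clause, the regulator $\rsyn$ of~\eqref{eq:syndef} is constructed from \'etale cohomology and the Hochschild--Serre spectral sequence, with no choice of branch or uniformizer, so $\rsyn$ itself is $\pi$-independent. The identities $\rsyn^{c,\pi} = \mysplit_\pi \circ \rsyn$ and $\rsyn^d = \myp \circ \rsyn$ from Definition~\ref{tworegdef} then let us apply the first formula pointwise on the image of $\rsyn$, yielding the second identity (with the understanding that the right-hand side $\rsyn^d$ is transported into $D_K/F^0$ via $I_\pi$). The real obstacle is conceptual rather than computational: verifying that $A$ and $B$ are genuinely $\pi$-independent amounts to carefully unpacking the definition of a filtered Frobenius monodromy module, in which the $K_0$-structure, $\varphi$, $N$, and the filtration on $D_K$ are all $\pi$-free and the uniformizer enters only through the comparison isomorphism $I_\pi$.
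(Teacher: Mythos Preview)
Your proof is correct and follows essentially the same route as the paper: invoke Remark~\ref{forproof} to write $\mysplit_\pi(\varepsilon)=B-I_\pi(A)$ with $\myp(\varepsilon)=NA$, observe that only $I_\pi$ carries $\pi$-dependence, and differentiate using~\eqref{chofuni}. You are simply more explicit than the paper about the $\pi$-independence of $A$ and $B$ and about why the ``consequently'' clause follows from Definition~\ref{tworegdef} and the $\pi$-independence of $\rsyn$.
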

\begin{proof}
  Following Remark~\ref{forproof} we see that, in the notation there, \[
    \frac{d}{d \log (\pi)} \mysplit_\pi (\varepsilon) = 
    \frac{d}{d \log (\pi)} B- I_\pi (A)
    =- \frac{d}{d \log (\pi)}  I_\pi (A)\;,
  \] 
  and~\eqref{chofuni} immediately shows that \[
    \frac{d}{d \log (\pi)}  I_\pi (A) = I_\pi N A \;.
  \] 
  Finally, by Remark~\ref{forproof} again, we have \( N A =
  \myp(\varepsilon) \).
\end{proof}

There are two cases where it is known that syntomic regulators in the bad
reduction case is given by Vologodsky integration. The first
is that of zero-cycles on a proper variety \( X \) of
dimension \( n \). In this case we have \( j=n, i=2n \) and the regulator
takes the form of a map \[
  CH^n(X)_0 \to 
  \hst^1(K,
   \het^{2n-1} (X\otimes \Kbar,\Qp(n)))\;.
\] 
Note that in this case \( D^0 \) and \( D^{-2}  \) for \( V \) correspond to
weights \( 2n \) and \( 2n-2 \) of \( \Dst(\het^{2n-1}(X\otimes
\overline{K},\mathbb{Q}_p) ) \). Thus, if the monodromy weight
conjecture holds for \( X \), then \( N: D^0 \to D^{-2}  \) is an
isomorphism. Therefore, by Proposition~\ref{twocansplit}, the discrete component of
the regulator is \( 0 \). The syntomic regulator is therefore equal to its
continuous component \[
  \rsyn: 
  CH^n(X)_0 \to \hdr^{2n-1}(X /K) 
  / F^n\;.
\]
The target of the regulator is Poincare dual to \( F^1 \hdr^1(X / K) \),
the space of holomorphic one forms on \( X \).
The following is well known. 
\begin{proposition}
  The vector space \( CH^n(X)_0 \) consists of zero cycles of degree \( 0
  \) on \( X \), and for such a cycle \( z \)  and a holomorphic form \(
  \omega \) one has \[
    \rsyn(z)(\omega) = \int_z \omega\;.
  \] 
\end{proposition}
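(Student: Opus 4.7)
The plan has two distinct pieces: identifying $CH^n(X)_0$, and computing the regulator on it.

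For the first piece, I would combine the Hochschild-Serre degeneration recalled just before~\eqref{eq:etreg} with Poincar\'e duality. For $X$ a proper smooth connected variety of dimension $n$, one has a canonical isomorphism $\het^{2n}(X\otimes\Kbar,\Qp(n))\cong \Qp$, and under the cycle class map the composition $CH^n(X)\to \het^{2n}(X,\Qp(n))\to H^0(K,\het^{2n}(X\otimes\Kbar,\Qp(n)))=\Qp$ is the degree. Thus $\hm^{2n}(X,\Q(n))_0$ equals the group of zero cycles of degree zero modulo rational equivalence.

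For the formula, my plan is to reduce to an abelian variety via the Albanese. Let $\alpha:X\to A:=\alb(X)$ and decompose a degree zero cycle $z=\sum n_i[P_i]$; on the Chow side one obtains the summation map $\alpha_\ast: CH^n(X)_0 \to A(K)$, and on forms Proposition~\ref{abvar} gives $\omega=\alpha^\ast\omega'$ for a unique invariant differential $\omega'$ on $A$. By the functoriality of the syntomic regulator and of the Vologodsky integral (Proposition~\ref{abvar} again) we reduce to proving the formula for $A$, a point $a\in A(K)$ representing the degree-zero cycle $[a]-[0]$, and an invariant form $\omega'$. Here one needs to check that the image of such a zero cycle in $\hst^1(K,\het^{2n-1}(A\otimes\Kbar,\Qp(n)))$ is the extension of $\Qp$ by $\het^{2n-1}(A\otimes\Kbar,\Qp(n))\cong \mathrm{Lie}(A^\vee)^\ast\otimes\Qp(1)^{\otimes n}$ classifying $a$ via the Kummer / Bloch sequence; a priori this is standard and follows from the construction of the \'etale regulator through Bloch's higher Chow complexes, together with the fact that for zero cycles of degree zero it coincides, after $\alpha_\ast$, with the Kummer class of the corresponding point of $A$.

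The last step is to match Fontaine's machinery with the Vologodsky integral. The continuous component $\rsyn^{c,\pi}$, as described in Definition~\ref{tworegdef}, lands in $\hdr^{2n-1}(X/K)/F^n$, which pairs with $F^1\hdr^1(X/K)=\Omega^1(X)$ under Poincar\'e duality. On the abelian variety side, the image of $a\in A(K)$ in $\Dst(V)_K/F^0$ for $V=\het^{2n-1}(A\otimes\Kbar,\Qp(n))$ is computed, by admissibility and the description of Hyodo--Kato = de Rham, as $\log_A(a)\in \mathrm{Lie}(A)\otimes K$ (since $A$ has potentially good reduction as it is proper), and pairing with $\omega'$ yields $\langle \log_A(a),\omega'(0)\rangle=\int_0^a \omega'$ by Proposition~\ref{abvar}. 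Summing over the $P_i$ and using the linearity of Vologodsky gives $\sum n_i\int_0^{\alpha(P_i)}\omega'=\int_z\omega$.

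The main obstacle is the middle step: identifying, in a functorial way, the image of a degree-zero zero cycle under the \'etale (hence syntomic) regulator with the class of the associated point of the Albanese under the Kummer extension of $\Qp$ by $\het^1(A\otimes\Kbar,\Qp(1))$. Once this identification is in place, all remaining computations are $p$-adic Hodge theory applied to an abelian variety and are routine, but writing down the precise compatibility of the cycle class map on $CH^n(X)_0$ with the Abel--Jacobi / Albanese map at the level of Galois extensions is the technical heart of the argument.
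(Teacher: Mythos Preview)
The paper does not actually prove this proposition: it is introduced with ``The following is well known'' and no argument is given. So there is no paper proof to compare against; your proposal is an attempt to supply what the author omits.

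Your outline is the standard one and is essentially correct in spirit: identify the degree map via the edge map in Hochschild--Serre, reduce to the Albanese, identify the \'etale Abel--Jacobi class of a degree-zero zero-cycle with the Kummer class of the corresponding point on $A$, and then read off the continuous component via the Bloch--Kato exponential / logarithm of $A$. This is indeed how the result is usually established, and references such as Nekov\'a\v{r}'s paper \cite{Nek93} or Raskind's survey make the middle compatibility explicit.

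There is one genuine slip: you write ``since $A$ has potentially good reduction as it is proper''. This is false --- properness has nothing to do with reduction type, and a Tate elliptic curve is a proper abelian variety with no potentially good reduction. In the setting of the paper $X$ is only assumed to have semi-stable reduction, so $A=\alb(X)$ will in general have a nontrivial toric part. Fortunately this does not wreck the argument: the logarithm $\log_A:A(K)\to \operatorname{Lie}(A)$ exists regardless of reduction type, and the identification of the continuous component $\rsyn^{c,\pi}$ with $\log_A$ paired against invariant forms goes through in the semi-stable case as well (this is precisely the content of the discussion surrounding Proposition~\ref{twocansplit}: under the monodromy--weight conjecture the discrete part vanishes and the continuous part is the whole regulator). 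So your conclusion survives, but the justification you gave for the last step needs to be replaced by the semi-stable comparison rather than an appeal to good reduction.
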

As we saw in Corollary~\ref{hol-indep}, the Vologodsky
integral of a holomorphic form \( \omega \) on a proper \( X \) is
independent of the choice of the branch of the logarithm. This is
consistent with the syntomic regulator having no discrete part in this
case.

The second case
is the
regulator for \( K_2 \) of curves. In~\cite{Bes18} we proved the following
result.
\begin{theorem}[{\cite[Corollary~1.3]{Bes18}}]\label{k2sstwo}
 Let \( X \) be a proper curve with arbitrary reduction over \( K \). Then,
  with respect to a fixed uniformizer \( \pi \), 
  the result of Theorem~\ref{ktwogood} continues to hold provided we
  replace the syntomic regulator with its continuous part with this choice of uniformizer, assume that the
  form \( \omega \) is in the kernel of the monodromy operator
  and replace Coleman
  integration with Vologodsky integration with respect to the branch of
  logarithm determined by \( \pi \).
\end{theorem}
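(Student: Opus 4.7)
The strategy is to recast Theorem~\ref{k2sstwo} as an extension-class computation in $\ffm$ and then apply the splitting of Remark~\ref{forproof}. First I would verify that we are in the situation of Proposition~\ref{twocansplit}: for $V = \het^1(X \otimes \Kbar, \Qp(2))$, the Weil weights on $D = \Dst(V)$ are $-4,-3,-2$ (the weights on $\het^1$ are $0,1,2$, shifted by the Tate twist), hence $D^0 = 0$ and the canonical splitting $\mysplit_\pi$ of~\eqref{eq:shorty} exists. Cupping with $\omega \in \hdr^1(X/K)$ sends $\rsyn(\{f,g\})$ into $\hst^1(K, \Qp(1)) = K \oplus \Qp$, whose continuous part is $K = D_K/F^0$. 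The goal becomes: compute the image of $\{f,g\} \cup \omega$ under $\mysplit_\pi$ explicitly.

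The main construction is the extension in $\ffm$ that realizes the regulator class. Using the Milnor K-theory / syntomic description of \cite{Nek-Niz14}, the symbol $\{f,g\}$ has an explicit cocycle representative built from $\log(f)$, $\log(g)$ and $df/f$, $dg/g$ on the semi-stable special fiber (as a log scheme) together with de Rham data on the generic fiber. Pairing with $\omega$ produces an extension of $\one$ by $\mathbb{D}(\Qp(1))$; the Hyodo-Kato part carries the monodromy structure of the symbol, while the filtered de Rham part is controlled by the form $\log(g)\omega$ and the divisor $(f)$. The hypothesis $N\omega = 0$ is essential here: it guarantees that the cup product commutes with the splitting $\mysplit_\pi$, so that no extra monodromy term from $\omega$ leaks into the continuous component. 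Without it, pairing with $\omega$ would mix the $D^{-2}$-piece into $D_K/F^0$ in a way incompatible with the formula.

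By Remark~\ref{forproof}, the continuous component is computed by lifting $1 \in \one$ to the unique Frobenius-invariant $A$ in the extension, choosing any $B \in F^0 D'_K$ lifting $1$, and forming $B - I_\pi(A)$. The heart of the proof is to identify $A$ with the Vologodsky primitive $F = \int \log(g)\omega$ taken with respect to the branch $\log(\pi) = 0$. Vologodsky's Tannakian characterization from Section~\ref{sec:colvol} describes such a primitive precisely as a Frobenius-equivariant section of the unipotent bundle cut out by the differential equation $dF = \log(g)\omega$, which matches the Frobenius-invariance of $A$. Once this identification is in place, evaluating $B - I_\pi(A)$ against the divisor $(f) = \sum n_P P$ via the boundary map in syntomic cohomology produces $\sum n_P F(P) = \int_{(f)} \log(g)\omega$, yielding the desired formula.

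The main obstacle is making the identification of $A$ with the Vologodsky primitive precise, because this reconciles two a priori unrelated constructions: Vologodsky's theory built from unipotent isocrystals and parallel transport, and the Galois-theoretic description of $\hst^1$ via $\cst$ and $p$-adic Hodge theory. One concrete route is to cover $X$ by an admissible family of affinoids of good reduction, apply Theorem~\ref{ktwogood} locally to obtain Coleman primitives on each piece, and then glue them using the explicit monodromy action in $\ffm$; the condition $N\omega = 0$ is exactly what makes the gluing close up to a well-defined global Vologodsky function whose value at $(f)$ equals the continuous regulator.
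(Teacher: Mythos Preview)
This theorem is not proved in the present paper: it is simply quoted from~\cite[Corollary~1.3]{Bes18}. There is therefore no ``paper's own proof'' to compare against directly. What the paper does contain, in Section~\ref{sec:toric}, is a description of the key formula from~\cite{Bes18} on which the result rests, namely
\[
  \sum_v \pair{\log(f),F_\omega;\log(g)}_{(U_v-Z)^\dagger} - \sum_e c_\omega(e)\cdot \pair{\log(f),\log(g)}_e\;,
\]
built from a covering of \(X\) by wide open spaces \(U_v\), Coleman's triple local indices on each \(U_v\), and local indices on the connecting annuli. The harmonic cochain \(c_\omega\) enters via Theorem~\ref{beszer}. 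This is a concrete rigid-analytic computation, not an abstract extension-class argument.

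Your proposal takes a genuinely different, and more conceptual, route: compute \(\mysplit_\pi\) on the extension class directly by identifying the Frobenius-invariant lift \(A\) with a Vologodsky primitive. This would be an attractive alternative if it could be made to work, but as written it has a real gap at exactly the point you flag as ``the main obstacle.'' The assertion that the Vologodsky primitive \emph{is} the Frobenius-invariant lift \(A\) conflates two different Frobenius structures: the Frobenius on the Tannakian path space used to single out Vologodsky's canonical path (which also involves a monodromy condition you do not address), and the semi-linear crystalline Frobenius on \(\Dst\) used in the complex \(\cst\). Bridging these is precisely the content of the comparison theorems underlying~\cite{Bes18}, and it is not a formality. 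Your last paragraph effectively concedes this by falling back on gluing local Coleman primitives over a good-reduction cover, which is the actual method of~\cite{Bes18}; but then the hard work (the triple-index formula, the role of \(c_\omega\), and why \(N\omega=0\) kills the obstruction) is exactly what is missing from the sketch.
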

This is a special case of a more general result where one does not assume
that the form \( \omega \) is holomorphic, but only that it is in the
kernel of monodromy. We will say more about this generalization in
Section~\ref{sec:toric}.
The monodromy operator \( N \) is transported from \( \Dst \) to \( \DR \)
via the comparison isomorphism. We will
  look at this operator for the special case of curves later in the paper.

\section{The derivative of a Vologodsky function with respect to \( \log (p) \)}
\label{sec:deriv}

In this section we study the universal log variant of Vologodsky
integration, as introduced in Section~\ref{sec:colvol}, and its derivative with
respect to \( \log (p) \).

The simplest result on this derivative was already mentioned in the
introduction. The universal logarithm \( \log  \) on \( \gm \). We proved in the introduction that \[
  \frac{d}{d \log (p)} \log (z) = v_p(z)\;.
\] 

Before continuing it is worth pointing out the obvious functoriality
property,
\begin{equation}\label{eq:fun}
 \dlp (\alpha^\ast F) = \alpha^\ast \left(\dlp F \right) \;,
\end{equation} 
valid for any Vologodsky function \( F \) on \( X \) and any morphism \(
\alpha: Y \to X \)~\cite[Lemma~9.13]{BMS21}. In particular, for any rational function on \( X \) we
get \[
  \left(\dlp \log (f)\right)(x) = v_p(f(x)) \;.
\] 

There is one particular version of Coleman integration that has some
dependency on the branch of the logarithm. Let \( X \) be the generic fiber
of a smooth proper \( \O_K \)-curve \( \mathcal{X} \) and let \( T=
\mathcal{X}_\kappa \). There is a reduction map \( \red: X\to T \), where
we abuse notation to write \( X \) for its rigid analytification. the
preimages of \( \kappa \)-rational points in \( T \) are rigid analytic
open discs in \( X \) called \emph{residue discs}. Let \( S\subset
T(\kappa) \) be a finite subset and let us, for each \( y\in S \), pick a
strictly smaller
rigid analytic disc \( D_y\subset \red^{-1}(y)  \). The space \(
U=X-\cup_{y\in S} D_y \) is a wide open space, and the underlying affinoid
is \( A = X- \red ^{-1}(S)  \). Their difference is a disjoint union of annuli,
\( e_y = \red ^{-1}(y)-D_y  \). if \( \omega \) is a rigid one form on \( U
\), its Coleman integral, which by standard theory is only defined on \(
A \), extends to the annuli \( e_y \). On these annuli \(
\omega \) is given by a convergent Laurent series and integration is done
term by term, with the residual term integrating to \( (\res_{e_y}
\omega)\cdot
\log(z) \), with \( z \) some local parameter. Note that multiplying the
parameter by a constant will change the constant of integration, which is
anyhow a degree of freedom. To fix the constants of integration, the
Tannakian interpretation of Coleman integration is extended to fiber
functors associated with the \( y \)'s~\cite[Section~5]{Bes99}. The theory
easily extend to meromorphic forms on \( U \) by successively removing
smaller and smaller discs around the singular points, and we have a similar
local expansion: If \( x \) is a singular point for \( \omega \) and
\( \res_x
\omega \ne 0 \), then the integral has a logarithmic term \( (\res_x
\omega)
\log (z_x)\), for a local parameter \( z_x \) at \( x \), and this will
clearly depend on \( \log (p) \). By the above, we see that if \( \res_x
\omega = 1 \) and \( \pi \) is a uniformizer for \( K \),
\begin{equation}\label{inters}
  \left(\frac{d}{d \log (\pi)} \int \omega\right) (x') = v_\pi(z_x(x'))\;.
\end{equation}  
This almost proves the following.
\begin{lemma}\label{lastlemma}
  Suppose \( D =\sum n_i P_i \) is a divisor on \( X \) with \( P_i \in A(K)
  \), with \( A \) the
  underlying affinoid of a wide open space \( U\subset X \). Let \( x_0\in A(K) \) be a point whose
  reduction is disjoint from the reduction of the points \( P_i \) and
  let \( \omega \) be a meromorphic form on \( U \), regular on the residue
  disc of \( x_0 \), whose residue divisor
  is \( D \) and choose \( \int  \omega \) to be a Coleman integral
  vanishing on \( x_0 \). Let \( P\in A(K) \). Then \[
    \frac{d}{d \log (\pi)} \int \omega (P) = P \cdot D\;,
  \] 
  the intersection multiplicity of \( P \) with \( D \) on \( \mathcal{X}
  \), where \( P \) and \( D \) are considered divisors on \( \mathcal{X}
  \) by extending them to section and considering the associated sections.
\end{lemma}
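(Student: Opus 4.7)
My plan is to apply the single-pole formula \eqref{inters} locally at each residue point of $\omega$ and then identify the resulting sum of valuations with the intersection multiplicity $P\cdot D$ on $\mathcal{X}$.

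After possibly shrinking the discs $D_y$, I may assume the points $P_i$ have pairwise distinct reductions, so that each residue disc of $A$ contains at most one $P_i$. Near each $P_i$, choose $z_{P_i}$ to be a regular function on a neighborhood of the section $P_i$ in $\mathcal{X}$ whose zero locus is precisely that section; then $z_{P_i}$ restricts to an analytic local parameter at $P_i$ on the residue disc $D_i$. The Coleman primitive admits the local description
\begin{equation*}
\int\omega\big|_{D_i\setminus\{P_i\}} = n_i\log(z_{P_i}) + R_i,
\end{equation*}
where $R_i$ is a rigid analytic primitive on $D_i$ of the rigid form $\omega - n_i\,dz_{P_i}/z_{P_i}$ (regular on $D_i$ because the residue at $P_i$ has been cancelled). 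Applying \eqref{key-thing} via the functoriality \eqref{eq:fun}, the $\log(\pi)$-derivative of $n_i\log(z_{P_i}(P))$ is $n_i v_\pi(z_{P_i}(P))$, while the rigid analytic function $R_i$ contributes nothing.

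To globalize, note that the condition $\int\omega(x_0)=0$ pins down the integration constant. Since $\red(x_0)$ is disjoint from each $\red(P_i)$, the basepoint $x_0$ lies in the complement $A'$ of the residue discs $D_i$, where $\omega$ is rigid analytic; Coleman integration on the good-reduction affinoid $A'$ therefore introduces no further $\log(\pi)$-dependence, so $\frac{d}{d\log(\pi)}\int\omega\,(P) = 0$ for $P\in A'(K)$. Patching $A'$ to each $D_i$ across the intermediate annulus via \eqref{inters} then gives
\begin{equation*}
\frac{d}{d\log(\pi)}\int\omega\,(P) = \sum_i n_i\,v_\pi(z_{P_i}(P))
\end{equation*}
for every $P\in A(K)\setminus \operatorname{supp}(D)$, with at most one summand nonzero. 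Finally, the regular function $z_{P_i}$ cuts out the section $P_i$ on $\mathcal{X}$, so pulling back along the section of $P$ yields $\operatorname{Spec}\mathcal{O}_K/(z_{P_i}(P))$, whose length is $v_\pi(z_{P_i}(P)) = (P\cdot P_i)_{\mathcal{X}}$. When $\red(P)\neq \red(P_i)$ the sections are disjoint on $\mathcal{X}$ and both quantities vanish. Summing yields $P\cdot D$, completing the proof.

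The main obstacle is the globalization step: justifying that the Coleman primitive on $A'$ and its extension across the boundary annuli into each $D_i$ contribute no $\log(\pi)$-dependence beyond the local logarithmic term $n_i\log(z_{P_i})$. This rests on the fact, implicit in the Tannakian construction of Section~\ref{sec:colvol}, that Coleman integration on a good-reduction affinoid is branch-free, and on the uniqueness of the rigid analytic primitive on the disc $D_i$, whose constant of integration is pinned down by the normalization at $x_0$.
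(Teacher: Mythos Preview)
Your argument is essentially correct and follows the same underlying idea as the paper's proof, namely that the only branch dependence comes from the logarithmic singularities at the $P_i$ and that the resulting valuations compute intersection numbers on $\mathcal{X}$. One small slip: the sentence ``after possibly shrinking the discs $D_y$, I may assume the points $P_i$ have pairwise distinct reductions'' does not make sense as written, since the $D_y$ are the boundary discs removed from $X$ and have nothing to do with the reductions of the $P_i\in A(K)$, which are fixed by the model $\mathcal{X}$. This step is in any case unnecessary: either invoke linearity to reduce to a single point, or simply allow several $P_i$ in the same residue disc and sum the logarithmic terms there.

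The paper's proof is organized differently and is worth comparing. Rather than carrying out the local expansion and the globalization step you describe, the paper first reduces by linearity to $D=P_1$, then observes that since any two admissible choices of $\omega$ differ by a holomorphic form (whose Coleman integral is branch-independent), one may as well take $\omega=d\log(f)$ for a rational function $f$ with $\operatorname{div}(f)\cap U=P_1$ and $f(x_0)=1$; after removing further residue discs this is always possible. Then $\int\omega=\log(f)$ globally, and the result is immediate from~\eqref{key-thing} together with the elementary identification $P\cdot P_1=v_\pi(f(P))$. This trick bypasses entirely the globalization bookkeeping that you flag as the ``main obstacle'': once $\omega$ is exact with a rational primitive, there is nothing to patch. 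Your direct approach, on the other hand, makes the mechanism of branch dependence more transparent and does not rely on being able to realize $\omega$ as a logarithmic differential, so it would adapt more readily to situations where such a global rational $f$ is not available.
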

\begin{proof}
  By linearity we may assume that \( D=P_1 \) is a single point. Since the
  integral of a holomorphic form does not depend on the branch, it does not
  matter which form \( \omega \) we pick. By removing additional residue
  discs we may assume that \( \omega = d \log (f) \) for a rational
  function \( f \) which is regular on \( U-P_1 \), with \( f(x_0)=1 \).
  This implies that \( T \) is not in the divisor of \( f \) and clearly \(
  P_1 \cdot P = v_{\pi} (f(P))\). The result 
  follows from~\eqref{inters}.
\end{proof}

Results for more general integrals are known only for curves with
semi-stable reduction, as this is the only case where the relation between
Vologodsky integration and Coleman integration is known, first, in the
non-iterated case by~\cite{Bes-Zer13}, and then, in general, by~\cite{Kat-Lit21}.

%%%%
We recall the setup of~\cite[Section~4]{Bes17}.
% \ref{sec:globsemi}
We assume that the curve $X$
is the generic fiber of a proper regular flat $\O_K$ scheme $\XX$ of relative dimension
$1$ with
semi-stable reduction $T$ which decomposes into smooth irreducible components
\begin{equation}
  \label{eq:Yi}
  T = \cup_i T_i\;.
\end{equation}
For simplicity we will assume that components $T_i$ and $T_j$ intersect at
at most one point.

Let $\Xgrph$ be the dual graph of $T$ with vertices $V$ and edges $E$
(this is of course an abuse of notation as it really depends on the
particular model). The vertices correspond to the components $T_v$
while the edges are ordered pairs of intersecting components
$(T_v,T_w)$ oriented from $v$ to $w$, so that an edge $e$ has tail
$e^+=v$ and head $e^-=w$.

The reduction map $\red: X\to T$ allows us to cover $X$ with rigid analytic
domains $U_v = \red^{-1} T_v$ which are basic wide open spaces. These then intersect along annuli corresponding
bijectively to the unoriented edges of $\Xgrph$ (see Figure~\ref{figureone}) .
An orientation of an annulus fixes a sign for the residue along this
annulus, and we match oriented edges with oriented annuli as
in~\cite[Definition~4.7]{Bes17}. We use the same notation for
the edge and for the associated oriented annulus.

\begin{figure}
\includegraphics[width=\textwidth,height=1.875in]{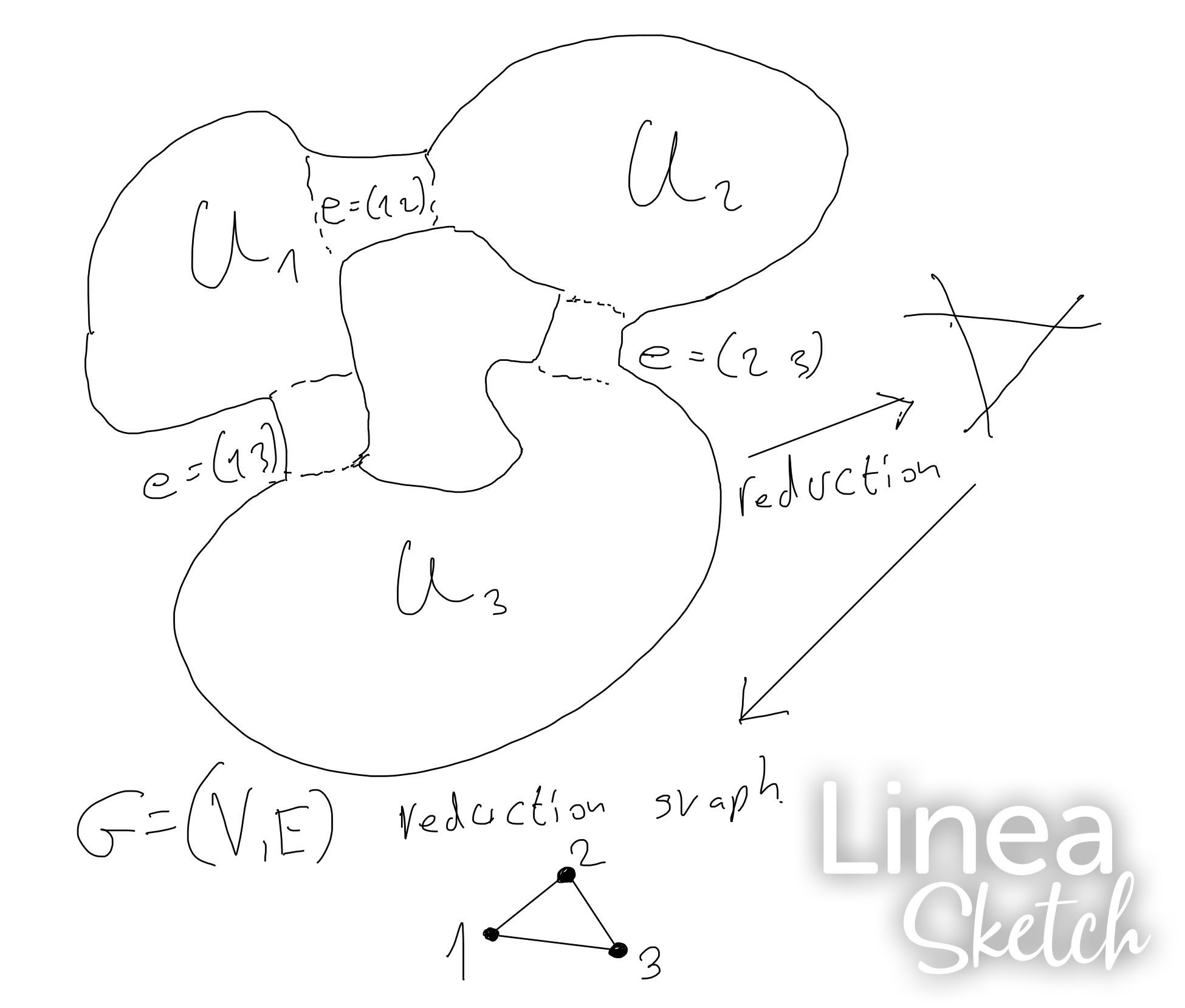}\\
\caption{semi-stable curve and its reduction graph.}
\label{figureone}
\end{figure}

We give a 
Crash course on graph cohomology. All of this material is well known.
For us, a graph \( \Gamma \) has a set \( V \) of vertices and \( E \) of
oriented edges.
Each edge
$e\in E(\Gamma)$ has a tail and head, $e^+,e^- \in V(\Gamma)$
respectively, and for each edge we have the edge with reverse
orientation $-e$ such that $(-e)^+ = e^-$ and $(-e)^- = e^+$
and \( K \) any field, let \( K^V \) denote as usual the
vector space of functions from \( V \) to \( K \), and \( K_-^E \) the set
of functions \( E\to K \) which are antisymmetric with respect to
orientation-reversing: \( f(-e)=-f(e) \). There is a differential
\(d: K^V\to K_-^E\), \(df(e)=f(e^+)-f(e^-)\) and we define graph cohomology
by
\begin{equation*}
H^1(\Gamma,K)= K_-^E / d K^V\;.
\end{equation*}
Both \( K^V \) and \( k_-^E \) have obvious scalar products and 
the dual operator to \( d \) with respect to these, \(d^\ast: K_-^E \to
K^V\), is given by
\(d^\ast g (v)= \sum_{e^+ = v} g(e)\).
The space of 
\emph{harmonic cochains} is defined as \(\mathcal{H}(\Gamma,K)= \Ker d^\ast\).

For a connected \(\Gamma\) the harmonic decomposition
\begin{equation}\label{harm-dec}
K_-^E = d K^V \oplus \mathcal{H}(\Gamma,K)
\end{equation}
follows by observing that
the kernel of the \emph{Laplacian}
\(\Delta=d^\ast \circ d: K^V \to K^V\) is the space of constant
functions and its image is its orthogonal complement. We note the following
easy observation.
\begin{lemma}\label{easyobs}
The projection on \(
d K^V\) coming from the harmonic decomposition~\eqref{harm-dec} is simply
\[
d \circ \Delta^{-1}\circ d^\ast\;.
\]
\end{lemma}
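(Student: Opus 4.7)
The plan is to start from an arbitrary element $g\in K_-^E$, use the harmonic decomposition~\eqref{harm-dec} to write $g = df + h$ with $f\in K^V$ and $h\in \mathcal{H}(\Gamma,K) = \ker d^\ast$, and then solve explicitly for $f$ by inverting the Laplacian. Applying $d^\ast$ to $g = df + h$ kills the harmonic part and gives $d^\ast g = d^\ast d f = \Delta f$, so one expects $f = \Delta^{-1} d^\ast g$ and thus the projection $df = d\circ \Delta^{-1}\circ d^\ast g$, which is exactly the asserted formula.

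The only point that needs care is that $\Delta$ is not invertible on all of $K^V$: its kernel is the constants, and (as noted in the paragraph preceding the lemma) its image is the orthogonal complement of the constants. So I would next verify that $d^\ast g$ actually lies in the image of $\Delta$, i.e.\ is orthogonal to the constant function $\mathbf{1}_V$. This is a direct adjointness computation: $\langle d^\ast g, \mathbf{1}_V\rangle = \langle g, d\mathbf{1}_V\rangle = 0$ since $d$ kills constant vertex functions. Hence $\Delta^{-1} d^\ast g$ is well defined up to an additive constant in $K^V$; applying $d$ erases this ambiguity, so $d\circ \Delta^{-1}\circ d^\ast$ is a well-defined linear map on $K_-^E$.

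Finally I would collect the pieces: the map $d\circ \Delta^{-1}\circ d^\ast$ sends $g$ to $df$, which is the $dK^V$-component of $g$ in the decomposition~\eqref{harm-dec}, so it agrees with the harmonic projection onto $dK^V$. There is no real obstacle here beyond the bookkeeping around the kernel of $\Delta$; the argument is pure finite-dimensional linear algebra on an inner product space and does not use any additional structure of the graph $\Gamma$ beyond its connectedness, which was already used to identify $\ker\Delta$ with the constants.
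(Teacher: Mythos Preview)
Your argument is correct and is exactly the natural one: decompose $g=df+h$, apply $d^\ast$ to kill $h$, invert $\Delta$ modulo constants, and apply $d$ to remove the constant ambiguity. The paper in fact gives no proof of this lemma at all---it is stated as an ``easy observation'' and left to the reader---so your write-up supplies precisely the missing verification, including the point about $d^\ast g$ lying in the image of $\Delta$, which is the only place any care is needed.
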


One source of harmonic cochains is the monodromy operator \( N \). For a
meromorphic form on \( X \) with no singularities on the annuli it is
defined as \[
  N\omega (e) := \res_e(\omega)\;,
\] 
where the residue of an analytic form on an oriented annulus is defined
in~\cite[Lemma~2.1]{Col89}. 
The residue theorem for wide open spaces~\cite[Proposition~4.3]{Col89} immediately shows
  that
  \begin{equation}\label{nharmonic}
  d^\ast N \omega (v)= \res_{U_v} \omega\;,
  \end{equation}
 where the right-hand side means the sum of the residues of \( \omega \) on
  \( U_v \). Recall that a meromorphic form on a curve is said to be of the
  \emph{second kind} if all its residues are \( 0 \). For such a form \(
  \omega \) we immediately get from~\eqref{nharmonic} that
\( N \omega \)
is harmonic. The monodromy operator as defined here is essentially the same
as the one coming from Fontaine's theory. Indeed, by~\cite{ColIov99} the
space \(  \mathcal{H}(\Gamma,K)
 \) embeds into \( \hdr^1(X /K) \) and the Fontaine monodromy
operator is just the composition of \( N \) as defined here with this
embedding. 

The results of~\cite{Bes-Zer13} are for a Vologodsky integral with respect
to a fixed branch, so at this point we fix such a branch. To state the main result
of~\cite{Bes-Zer13} we point out that as the domains \( U_v \) are basic
wide open spaces with good reduction, forms on them can be Coleman
integrated. These integrals extend, once a branch of the logarithm is
fixed, to the annuli \( e \). This is not part of the general theory and
needs to be argued separately (see~\cite[Section~5]{Bes99}).
\begin{theorem}[\cite{Bes-Zer13}]\label{beszer}
  Let $\omega$ be a meromorphic form on $X$ and let $F_\omega$ be a
  Vologodsky integral of $\omega$. Then, for each vertex $v$ of
  $\Xgrph$ there exist Coleman integrals $F_\omega^v$ of
  $\omega|_{U_v}$ such that $F_\omega $ equals $F_\omega^v$ on
  $U_v(K)$. For an oriented edge $e$ let
  $c_\omega(e)=F_\omega^{e^-}|_{e} - F_\omega^{e^+}|_{e} \in K$. Then $c_\omega$ is
  a harmonic cochain on $\Xgrph$ with values in $K$,
  $c_\omega\in \hh(\Xgrph,K)$.
\end{theorem}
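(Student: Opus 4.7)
The plan is to proceed in three stages: (i) construct local Coleman integrals $F_\omega^v$ agreeing with $F_\omega$ on $U_v(K)$, (ii) check that the edge differences $c_\omega(e)$ are honest constants in $K$, and (iii) prove harmonicity.

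For (i), since each $U_v$ is a basic wide open of good reduction, the Tannakian construction of Coleman integration applies there. The Tannakian construction of Vologodsky integration reviewed in Section~\ref{sec:colvol} reduces on such a domain to Coleman's: the Frobenius-invariant canonical path is the same object, since the monodromy correction specific to Vologodsky theory is trivial in the good-reduction case. Hence the restriction of $F_\omega$ to the underlying affinoid $A_v\subset U_v$ is already a Coleman integral of $\omega|_{A_v}$. Extending to the annular ends using the enlarged Tannakian framework of \cite[Section~5]{Bes99}, and pinning down the constants of extension so as to match $F_\omega$ at a $K$-point of each end, produces $F_\omega^v$ on $U_v$ with the stated equality.

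For (ii), on an oriented annulus $e$ pick coordinates $z$ from the $U_{e^+}$ side and $z'$ from the $U_{e^-}$ side with $zz'=\pi$. The two Coleman extensions take the form
\[
F_\omega^{e^+}|_e=g_+(z)+(\res_e\omega)\log z,\qquad F_\omega^{e^-}|_e=g_-(\pi/z)-(\res_e\omega)(\log\pi-\log z),
\]
the sign in the second expression reflecting $\res_{-e}\omega=-\res_e\omega$. The $\log z$ contributions in the difference $F_\omega^{e^-}|_e-F_\omega^{e^+}|_e$ cancel, while $g_+(z)$ and $g_-(\pi/z)$ are both primitives of the regular part of $\omega|_e$ and hence differ by a constant in $K$. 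Thus $c_\omega(e)\in K$, and antisymmetry is immediate from the definition.

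For (iii), which is the main obstacle, the plan is to exploit that $F_\omega$ is a globally defined Vologodsky function whose gluing across the cover is governed by the Fontaine monodromy operator $N$. By \cite{ColIov99} the embedding $\mathcal{H}(\Xgrph,K)\hookrightarrow\hdr^1(X/K)$ identifies $N\omega$ with the residue cochain $e\mapsto\res_e\omega$, which is harmonic by~\eqref{nharmonic}. Splitting $c_\omega(e)$ as in step (ii) into $-(\res_e\omega)\log\pi$ plus a constant $\alpha_e$, the first summand is harmonic because $N\omega$ is; for the second, one uses the normalization $F_\omega=F_\omega^v$ on $U_v(K)$ together with the residue theorem on $U_v$ to force $\sum_{e^+=v}\alpha_e=0$. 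The delicate point is that the constants $\alpha_e$ are genuinely global information about how Vologodsky integration stitches the local Coleman pieces together. A robust route is to differentiate $d^\ast c_\omega$ in $\log\pi$, reducing the $\log\pi$-coefficient to~\eqref{nharmonic}, and then to pin down the constant-in-$\log\pi$ part by a single base-point normalization of $F_\omega$ coherent across vertices.
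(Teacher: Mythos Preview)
The paper does not prove Theorem~\ref{beszer}; it is quoted from \cite{Bes-Zer13} and only discussed afterwards. So there is no in-paper argument to compare against, and your proposal must stand on its own.

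It does not. The essential content of the theorem is precisely that, among all ways of choosing local Coleman primitives $G^v$ (which always exist on the $U_v$ and always produce \emph{some} cochain $c(e)=G^{e^-}|_e-G^{e^+}|_e$), the Vologodsky integral is the unique one for which this cochain is harmonic. Your step~(i) takes for granted that the restriction of $F_\omega$ to $U_v(K)$ is a Coleman integral; this already needs an argument comparing the Vologodsky canonical path on $X$ with the Coleman path on $U_v$, and ``the monodromy correction is trivial in the good-reduction case'' is a statement about $U_v$ alone, not about how the global Vologodsky path restricts. More seriously, your step~(iii) does not establish harmonicity. You write that the summand $-(\res_e\omega)\log\pi$ is harmonic ``because $N\omega$ is'', but for a meromorphic form this is false: by~\eqref{nharmonic} one has $d^\ast N\omega(v)=\res_{U_v}\omega$, which need not vanish. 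Consequently neither summand in your splitting $c_\omega(e)=\alpha_e-(\res_e\omega)\log\pi$ is harmonic in general, and the residue theorem on $U_v$ does not force $\sum_{e^+=v}\alpha_e=0$ either. Your ``robust route'' of differentiating $d^\ast c_\omega$ in $\log\pi$ only computes the $\log\pi$-coefficient of $d^\ast c_\omega$; it says nothing about the constant term, and the suggested base-point normalization of $F_\omega$ does not supply the missing global constraint.

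What is actually needed is an input from the global characterization of the Vologodsky path (Frobenius invariance together with the monodromy condition in \cite{Vol01}); the harmonicity of $c_\omega$ is exactly the shadow of that global condition on the dual graph, and cannot be deduced from the purely local annulus computations you carry out in~(ii). The paper's post-theorem discussion makes this structure explicit: for arbitrary $G^v$ one decomposes $c=c_\omega+d\gamma$ via~\eqref{harm-dec}, and the theorem asserts that the Vologodsky choice is $F_\omega^v=G^v-\gamma(v)$. Proving this identification is the whole point, and your sketch does not engage with it.
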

It is important to observe that \( F_\omega \) is \emph{not} a function on
the annuli. It is only a function on \( K \)-rational points and the
annuli do not have any such points. This is why the single valued function
\( F_\omega \) can be defined on the \( U_v \) by Coleman functions that do
not match on the intersections. Obviously, one can extend \( F_\omega \) to
the annuli by extending the base field, but this will force a change in the
semi-stable model and no contradiction arises (see~\cite{Bes-Zer13}).

Theorem~\ref{beszer} gives a way of recovering Vologodsky integrals on
curves, provided one knows how to compute Coleman integrals. Namely, to
compute \( F_\omega \) first choose Coleman integrals \( G^v \) for \(
\omega \) on the corresponding \( U_v \). There is a choice of a constant
of integration for each \( v \) and one makes such a choice arbitrarily.
Then one computes the cochain \( c(e)= G^{e^-}-G^{e^+}   \). This cochain
  has a unique decomposition, by~\eqref{harm-dec}, \[
  c = c_\omega + d \gamma \;,\quad c_\omega \text{ harmonic, }\gamma\in K^V\;,
\] 
where \( \gamma \) is uniquely determined by \( d\gamma \) up to an
additive constant. The required \( F_\omega^v \) computing a Vologodsky
integral of \( \omega \) are then clearly given by \[
  F_\omega^v = G^v - \gamma(v)\;.
\] 
Note that by the discussion above we have
\[
  \gamma = \Delta^{-1} d^\ast c\;.
\]
To give a flavor of the type of results one might prove using this type of
analysis, let us begin by reproving Corollary~\ref{hol-indep} (The reader might find it
useful to look first at the very simple case of a Tate elliptic curves
considered in Section~3 of~\cite{Bes-Zer13}).
The key to understanding how the description of Vologoksky integration
affects the dependency on \( \log (p) \) is to analyze what happens at an
annulus. Suppose an annulus \( e \) is mapped, via a coordinate \( z \), to
the annulus \( |\pi|< |z| < 1 \) and a form \( \omega \) is given, whose
local expansion on \( e \) is \[
  \omega = \sum_k a_k z^k \frac{dz}{z}
\] 
so that \( \res_e \omega = a_0 \)
The integral of \( \omega \) on the domain which connects to the inner side
of \( e \) is given by integrating term by term \[
  F_1 = \sum_{k\ne 0} \frac{a_k}{k} z^k + a_0 \log (z)+C_1
\] 
for some constant \( C_1 \). To get the integral on the domain connecting
to the outer side we first make the change of variables \( z= \pi / w \),
\[
  \omega = -\sum_k a_k (\frac{\pi}{w})^k \frac{dw}{w}\;,
\] 
giving an integral \[
  F_2 = \sum_{k\ne 0} \frac{a_k \pi^k}{k} w^{-k}  - a_0 \log (w)+C_2\;.
\] 
Substituting back \( z = \pi / w \) we get \[
  F_1-F_2 = C_1-C_2 + a_{0} \log (\pi)
  = C_1-C_2 + \res_e \omega \log (\pi)\;.
\] 
Suppose that \( F \) is a Vologoksky integral of \( \omega \) with respect
to a certain branch \( \log (\pi) \). On each \( U_v \) it is represented
by a Coleman
integral with a choice of constant of integration, and the chain of
differences \( c_\omega(e) \) is harmonic. It is better to write here \( c_{\omega,\log (\pi)} \) to stress the dependency on \( \log (\pi) \)  According to the above computation,
for the same choices of constants of integration, with respect to a
different branch of the logarithm \( \log' (\pi) \), the chain of
differences is
\begin{equation}\label{cdeplog}
  c_{\omega,\log'(\pi)} =
  c_{\omega,\log(\pi)} + (\log' (\pi)-\log (\pi))N \omega\;.
\end{equation}
But since
\( N\omega \) is harmonic, this chain is harmonic as well, hence the same
constants of integration give again the Vologoksky integral. Therefore, the
integral is independent of the choice of the branch.

The advantage of this proof is that it applies verbatim for integrals of forms of the
second kind, which are therefore also independent of the branch of the
logarithm. For general meromorphic forms we have the following easy
generalization.
\begin{proposition}\label{vol-mer}
  Let \( \omega \) be a meromorphic form on \( X \) with no singularites on
  the annuli. Let \( F_\omega \) be
  a Vologodsky integral of \( \omega \) with respect to the universal
  branch. Then, the function \( d / d \log (\pi) F_\omega \) is constant on
  each domain \( U_v \)
  except near the logarithmic singularities of \( \omega \), and
  associating this constant with the vertex \( v \) one gets a function on
  the dual graph which satisfies \[
   \Delta \frac{d}{d \log (\pi)} F_\omega (v) = \res_{U_v} \omega\;.
  \] 
\end{proposition}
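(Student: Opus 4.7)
The plan is to combine two observations: (i) since \( \omega \) does not depend on the branch of the logarithm, differentiating \( dF_\omega=\omega \) yields \( d\bigl(\frac{d}{d\log(\pi)}F_\omega\bigr)=0 \); (ii) the Vologodsky integral can be reconstructed from Coleman integrals via Theorem~\ref{beszer}.

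For the first assertion (local constancy), the Coleman function \( \frac{d}{d\log(\pi)}F_\omega|_{U_v} \) is closed on \( U_v \) minus the residue discs of the logarithmic singularities of \( \omega \). Since this complement is a connected basic wide open, a closed Coleman function on it is a constant, which I denote \( \psi(v) \).

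To identify \( \Delta\psi(v) \) with \( \res_{U_v}\omega \), I write \( F_\omega^v=G^v-\gamma(v) \) with \( G^v \) a Coleman integral of \( \omega|_{U_v} \) normalized at a base point \( P_v \) whose reduction avoids the poles of \( \omega \), and \( \gamma=\Delta^{-1}d^\ast c \) for \( c(e)=G^{e^-}|_e-G^{e^+}|_e \). By Lemma~\ref{lastlemma}, \( \frac{d}{d\log(\pi)}G^v(P) \) vanishes at such \( P \), so \( \psi(v) \) is computed from \( \gamma \). Extending the annulus Laurent computation preceding~\eqref{cdeplog} from second-kind forms to general meromorphic forms, the leading \( \log(\pi) \)-dependence of \( c(e) \) is \( -N\omega(e)\log(\pi) \), coming from the \( a_0\log(z) \) term on the annulus, while the logarithmic singularities \( \log(z_x) \) inside each \( U_v \) produce additional \( \log(\pi) \)-dependence in the Laurent constants of \( G^v \) on the boundary annuli. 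Differentiating \( \Delta\gamma=d^\ast c \) with respect to \( \log(\pi) \) and summing over edges incident to \( v \), these extra contributions assemble, via the local matching \( a_0=\sum_{x\in U_v}n_x r_x+\res_e\omega_{\mathrm{hol}} \) between the annulus residue and the interior residues of \( \omega \) together with the residue theorem for wide open spaces in the form \( d^\ast N\omega(v)=\res_{U_v}\omega \) from~\eqref{nharmonic}, into the desired identity \( \Delta\psi(v)=\res_{U_v}\omega \).

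The hard part is carrying out this bookkeeping of \( \log(\pi) \)-contributions rigorously: the logarithmic singularities inside each \( U_v \) generate additional branch-dependent constants in the Laurent expansions of \( G^v \) at the boundary annuli, and one must verify that after applying \( d^\ast \) these combine with the main \( -N\omega(e)\log(\pi) \) term, through the residue theorem for wide open spaces, to leave exactly \( \res_{U_v}\omega \). The structural reason this works is that the global consistency of the Vologodsky integral with the Laurent expansion of \( \omega \) on each boundary annulus encodes precisely the residue theorem on \( U_v \).
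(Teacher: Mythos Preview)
Your overall strategy --- write $F_\omega^v=G^v-\gamma(v)$ with $\gamma=\Delta^{-1}d^\ast c$, differentiate in $\log(\pi)$, and invoke~\eqref{nharmonic} --- is exactly the paper's. But you have manufactured a difficulty that is not there.

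The ``additional $\log(\pi)$-dependence in the Laurent constants of $G^v$ on the boundary annuli'' coming from the interior logarithmic singularities does not exist. The Coleman integral $G^v$ is taken on the wide open $U_v'$ obtained from $U_v$ by deleting small discs around the poles; this space still has good reduction, so the Coleman path (and hence $G^v$ on the underlying affinoid) is fixed by Frobenius invariance alone and is branch-independent --- you already used this via Lemma~\ref{lastlemma}. The extension of $G^v$ to a boundary annulus $e$ is governed purely by the Laurent expansion of $\omega$ on $e$, and the only branch-dependent term it produces is $(\res_e\omega)\log(z)$. The interior poles influence the \emph{value} of $\res_e\omega$ (that is exactly what~\eqref{nharmonic} records), but they do not feed any separate $\log(\pi)$-terms into the constants $C_1,C_2$ of the annulus computation: those constants come from Frobenius-invariant paths in the good-reduction piece and are branch-independent.

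Consequently the annulus computation preceding~\eqref{cdeplog} applies verbatim to meromorphic $\omega$ (it only uses that $\omega$ is regular on $e$), giving $\frac{d}{d\log(\pi)}c=N\omega$. Then Lemma~\ref{easyobs} gives $\frac{d}{d\log(\pi)}\gamma=\Delta^{-1}d^\ast N\omega$, so away from the poles $\frac{d}{d\log(\pi)}F_\omega(v)=-\Delta^{-1}d^\ast N\omega(v)$, and applying $\Delta$ together with~\eqref{nharmonic} finishes. This is precisely the paper's two-line proof; the ``hard bookkeeping'' you flag evaporates once you recognize that the interior singularities contribute nothing extra to $c(e)$.
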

\begin{proof}
  The same analysis as before together with Lemma~\ref{easyobs} gives that the derivative is
  \[
    - \Delta^{-1}  d^\ast N \omega 
  \] 
  and we are done by~\eqref{nharmonic}.
\end{proof}

Note that the above result determines the derivative only up to an additive
constant. This is unavoidable since one could in theory have a different
constant of integration associated with \( F_\omega \) for each branch of
the logarithm. To avoid this make the integral independent of \( \log (\pi)
\) at some point, e.g., by setting it to \( 0 \) at that point. However,
there are cases where we have the constant built in. For example, if \( f \)
is a rational function on \( X \) and \( \omega = d \log (f) \), then it is
natural to pick \( F_\omega = \log (f) \) and this might have a constant of
integration which is a multiple of \( \log (\pi) \). In fact, we clearly
have \[
  \frac{d}{d \log (\pi)} \log (f) (v) = \ord_{T_v} f\;.
\] 
Note that this is consistent with~\cite[Lemma~2.1]{Bes-Zer13}.

As a particular case, and as a prelude to the discussion in
Section~\ref{ss-loch}, let us use Proposition~\ref{vol-mer} to recover local height
pairings on curves.
In~\cite{Bes17} we extended the Coleman-Gross \( p \)-adic height pairing on
curves~\cite{Col-Gro89} using Vologodsky integration instead of Coleman
integration at primes above \( p \),  and proved its compatibility with the
Nekov\'a\v{r} \( p \)-adic height pairing. When \( X \) is a smooth complete
curve over a number field \( F \), and  \( D,E \) are two divisors of degree \( 0 \)
on \( X \), their \( p \)-adic height pairing \( h(D,E) \) is a sum of local terms
corresponding to the different finite completions of \( F \). if \( K \) is
such a completion, with respect to a prime above \( p \), the local term
for \( K \) is the trace down to \( \mathbb{Q}_p \) (we do not discuss this
trace here, see Section~\ref{ss-loch}), of the expression \[
  (D,E) = \int_E \omega_D\;.
\] 
Here, the integral is a Vologodsky integral evaluated at the divisor \( E
\) while the form \( \omega_D \) is a specially chosen meromorphic form of
the third kind
whose residue divisor \( \sum_x \res_x(\omega)\cdot x \) is \( D \) (recall that a meromorphic one-form on a
curve is of the \emph{third kind} if it only has simple poles and all of
its residues are integers). The determination of a canonical such \(
\omega_D\) is one of the key elements in the construction, but fortunately
for us, we do not need to discuss it here, as we will only be interested in
\[
   \frac{d}{d \log (\pi)} (D,E)=
 \frac{d}{d \log (\pi)} \int_E \omega_D\;,
\] 
and, since \( \omega_D \) is well-defined up to the addition of a
holomorphic form, this quantity is independent of the precise choice by
Corollary~\ref{hol-indep}.
\begin{proposition}\label{ascolmez}
  The expression 
\[
   \frac{d}{d \log (\pi)} (D,E)
\] 
  equals, up to a fixed multiple, the local height pairing as defined
  in the proof of ~\cite[Proposition~1.2]{Col-Gro89}. 
\end{proposition}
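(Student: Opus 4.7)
The plan is to apply Proposition~\ref{vol-mer} directly to the third-kind form $\omega_D$ and identify the resulting combinatorial data with the vertical correction appearing in the classical intersection-theoretic local height. First, by enlarging the regular model $\XX$ if necessary, arrange that the supports of $D$ and $E$ reduce to smooth points of $T$ disjoint from the nodes, so that $\omega_D$ has no singularities on the annuli and its logarithmic singularities lie strictly inside the wide opens $U_v$. Combining Proposition~\ref{vol-mer} with the local analysis of the logarithmic singularities provided by~\eqref{inters} and Lemma~\ref{lastlemma}, I expect to find, for $Q\in U_v(K)$ whose reduction is disjoint from $\supp(D)$,
\[
  \dlp F_{\omega_D}(Q) = \gamma(v) + Q\cdot \bar D^{\textup{hor}},
\]
where $\bar D^{\textup{hor}}$ denotes the horizontal closure of $D$ in $\XX$, and $\gamma$ is a function on $V(\Xgrph)$, unique up to an additive constant, satisfying the Poisson equation $\Delta\gamma(v) = \res_{U_v}\omega_D = \deg_{T_v}(D)$.

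Summing over the support of $E$ and using $\deg(E)=0$ to kill the additive constant in $\gamma$ gives
\[
  \dlp (D,E) = \bar D^{\textup{hor}}\cdot \bar E^{\textup{hor}} + \sum_v \gamma(v)\deg_{T_v}(E).
\]
The key identification is that $\gamma$ coincides with the function $c$ determined by requiring the divisor $\bar D^{\textup{hor}} + \sum_v c(v)\,T_v$ on $\XX$ to be numerically trivial on every component. Under the running hypothesis that components meet in at most one point, the arithmetic intersection matrix has $T_v\cdot T_w = 1$ for distinct adjacent $v,w$ and $T_v^2 = -\deg(v)$ (since the special fiber $\sum_w T_w$ is principal), so the intersection matrix equals $-\Delta$. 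The defining equation for $c$ therefore reduces to $\Delta c(v) = \deg_{T_v}(D)$, the same equation satisfied by $\gamma$, hence $\gamma=c$ modulo constants. Substituting and rewriting the vertical sum as $(\sum_v c(v)T_v)\cdot \bar E^{\textup{hor}}$ yields $\dlp (D,E) = \bar D\cdot \bar E^{\textup{hor}}$, which up to the standard normalizing factor $\log q$ is the non-archimedean local height pairing described in the proof of~\cite[Proposition~1.2]{Col-Gro89}.

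The main obstacle is justifying the local splitting above. Proposition~\ref{vol-mer} as stated controls only the combinatorial constant $\gamma(v)$ on each $U_v$; one must then patch in the contribution from the logarithmic singularities of $\omega_D$ interior to $U_v$ using the residue-disc computation~\eqref{inters}, and check that no cross-terms between different singularities in the same $U_v$ are missed. This is a routine extension of the proof of Proposition~\ref{vol-mer} together with the argument of Lemma~\ref{lastlemma}. The remaining bookkeeping — matching the orientation conventions for edges with signs for annular residues and pinning down the precise scalar relating the Laplacian normalization to the intersection-theoretic one — is elementary and produces the ``fixed multiple'' in the statement.
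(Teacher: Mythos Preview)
Your proposal is correct and follows essentially the same route as the paper's own proof: split the derivative of $\int_E \omega_D$ into a horizontal intersection contribution $D\cdot E$ handled by Lemma~\ref{lastlemma} and a combinatorial vertical part governed by Proposition~\ref{vol-mer}, then identify the Laplacian equation $\Delta\gamma(v)=\deg_{U_v}D$ with the condition determining the vertical-correction coefficients $a_v$ by computing that the intersection matrix of the components equals $-\Delta$. The paper is terser about the local patching step you flag as the main obstacle (it simply invokes Lemma~\ref{lastlemma} for the first term and then treats the global part), but the ingredients, the key identification, and the overall logic are the same.
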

\begin{proof}
  For simplicity, we assume that \( D \) and \( E \) are formal sum of \( K
  \)-rational points and skip the formal argument to show that this is
  sufficient.
  The relevant, ``rational part'', of the local height pairing is defined
  in~\cite{Col-Gro89} as the intersection product on a regular model \(
  \mathcal{X} \) of \(
  X\), of divisors extending the divisors \( D \) and \( E \), to divisors
  which have a zero intersection with each component of the special fiber.
  In other words, we extend the points in \( D \) and \( E \) to sections
  and then add rational multiples of the components of the special fiber to
  satisfy the requirement. Clearly, it suffices to do this for only one of
  the divisors. So let \( D \) and \( E \) denote also the divisor on
  $\mathcal{X}$
  obtained by extending the points in \( D \) and \( E \) to sections over
  \( \O_K \) and let \( \tilde{D} = D+ \sum_v a_v T_v\) be the required
  extension, so that we need to compute
  \begin{equation}\label{locheight}
    \tilde{D}\cdot E = D \cdot E + \sum_v  a_v D\cdot T_v\;.
  \end{equation}
  Using Lemma~\ref{lastlemma} it is easy to see that we only need to account for the second term on the
  right-hand
  side of~\eqref{locheight}
  Clearly, \( \res_{U_v}\omega_D = \deg_{U_v}D \), where the right-hand
  side indicates the degree of the part of \( D \) inside \( U_v \).
    If \( f=\Delta^{-1}  (v\mapsto \deg_{U_v}D)\) then we easily see that
    the ``global'' contribution to the derivative is
  \[
     \sum_{v\in V} \deg_{U_v}E f(v)= E \cdot \sum f(v)T_v
  \] 
  and we are left with showing that \( f(v)=a_v+ \textup{const} \). The condition on \(
  \tilde{D} \) is that for every \( w\in V \) we have
  \[
    0= (D+\sum_v a_v T_v)\cdot T_w = \deg_{U_w} D + \sum_v a_v T_v \cdot T_w\;.
  \]  
  The sum on the right is clearly only over \( v \) for which there exist
  an edge \( (v,w) \), and over \( v=w \). Our assumptions about the
  reduction imply that for \( v\ne w \) in the sum we have \( T_v \cdot T_w
  = 1 \). Furthermore, since the special fiber is just \( \sum_v T_v \) and its
  intersection with each \( T_w \) is \( 0 \), we see that
  \[
  T_w \cdot T_w = - \sum_{(v,w)\in E} 1
  \]
  so that we have
  \[
    0=  \deg_{U_w} D + \sum_{(v,w)\in E} a_v - a_w\;.
  \]  
  This gives on \( v\to a_v \) the same condition as the condition on \( f
  \), making them equal up to a constant as required.
\end{proof}
\begin{remark}\label{colmremark}
  As mentioned in the introduction, a similar observation has been made by
  Colmez~\cite{Colm96}, in the proof of
Th\'eor\`eme~II.2.18.
  The method is quite different.
  Colmez simply notes that the derivative satisfies the basic list of
  properties which uniquely characterize the height pairing. Here on the
  other hand, we directly show that the result is the same as the
  construction of the local height pairing in terms of intersection theory.
\end{remark}

In order to treat iterated integrals we need the work of Katz and
Litt~\cite{Kat-Lit21}. We will present it in an equivalent way (the
equivalence with the original formulation was proved by Katz in a private
communication). The new formulation is a direct generalization
of~\cite{Bes-Zer13}.

To describe the result, choose for simplicity a \( K \) rational point \(
x_v \) in
each domain \( U_v \). Let \( (M,\nabla) \) be a unipotent connection on \(
X\). We have ``monodromy matrices'' corresponding to the following ``loops''
associated with each edge \( e=(v,w) \): Start with \( x_v \), use
Vologodsky parallel transport to go to \( x_w \), use Coleman parallel
transport to go to the side of the annulus \( e \) on the \( U_w \) side,
cross to the \( U_v \) side of \( e \) and finally Coleman parallel
transport back to \( x_v \) (see Figure~\ref{figuretwo}).

\begin{figure}
\includegraphics[width=\textwidth,height=1.77083in]{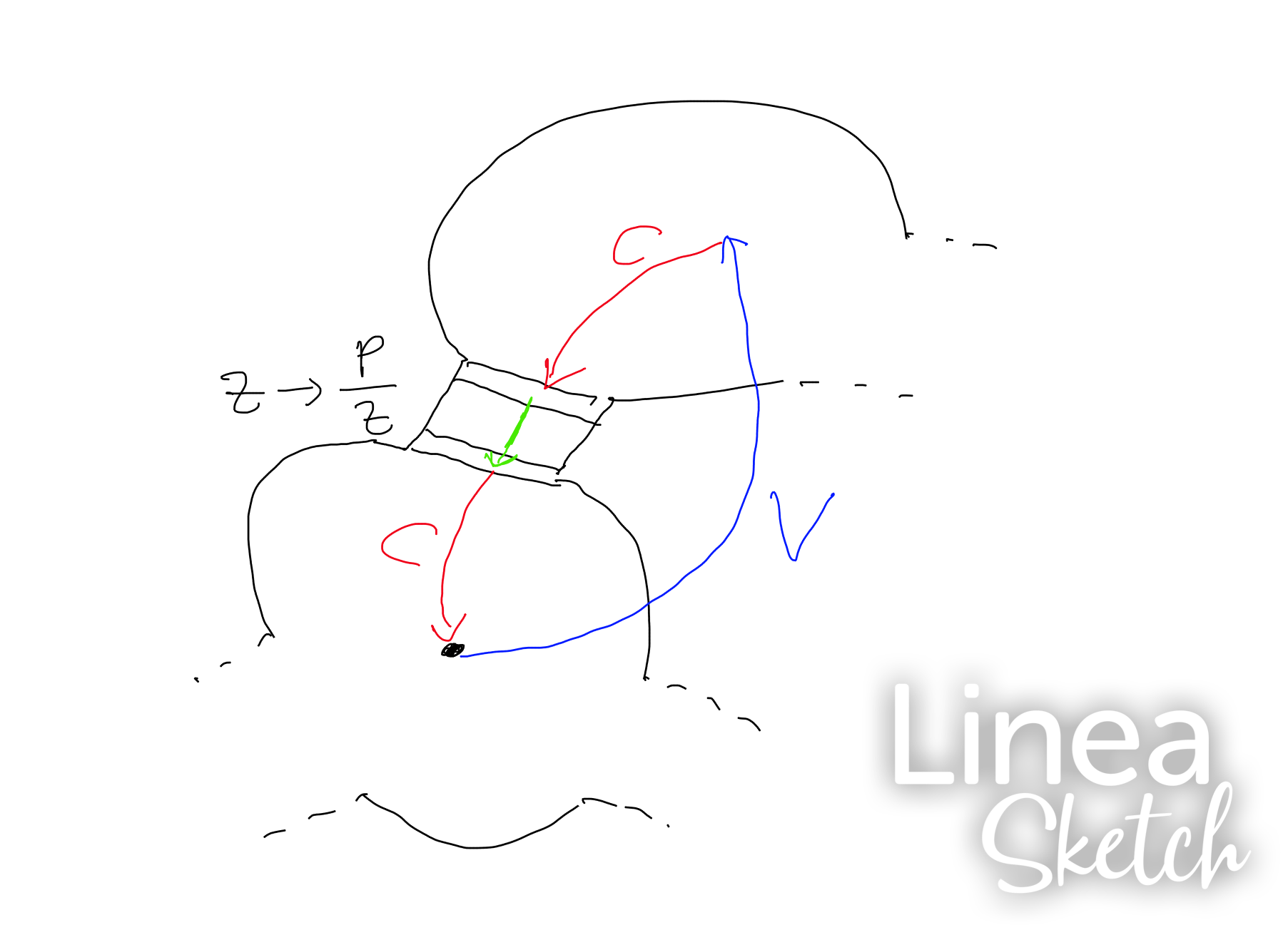}\\
  \caption{The loop determining \( A(e) \).}
\label{figuretwo}
\end{figure}

This gives an operator on the fiber at \( x_v \) of \( M \). Up to
conjugacy it is independent of the choice of \( x_v \). Denote it by \(
A(e) \).

\begin{theorem}[Reinterpreted Katz-Litt]
The association \(e\to A(e)\) is harmonic in the sense that for each
\(v\in V\), \(\sum_{e^+=v} \log(A(e)) = 0\).
\end{theorem}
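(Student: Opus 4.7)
My plan is to prove the theorem by induction on the unipotent length $n$ of $(M,\nabla)$, using Theorem~\ref{beszer} as the base case; when $n=1$ the connection is trivial and $A(e)=1$ for every $e$, so there is nothing to prove.

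For the base case $n=2$, the connection is classified by a closed meromorphic one-form $\omega$ and parallel transport takes the form $\begin{pmatrix}1 & \int\omega \\ 0 & 1\end{pmatrix}$. Unpacking the four legs of the loop in Figure~\ref{figuretwo} defining $A(e)$ using the identity $F_\omega^v = F_\omega$ on $U_v(K)$ from Theorem~\ref{beszer}, the Vologodsky leg $x_v \to x_w$ and the two Coleman legs $x_w \to $ annulus and annulus $\to x_v$ recombine so that the off-diagonal entry of $A(e)$ is exactly the annulus jump $c_\omega(e)$, up to sign and orientation convention. Since the logarithm of a $2\times 2$ unipotent matrix is just its off-diagonal entry, the harmonicity of $e\mapsto \log A(e)$ reduces to that of $c_\omega$, which is exactly Theorem~\ref{beszer}.

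For the inductive step I would take a short exact sequence $0 \to (M',\nabla') \to (M,\nabla) \to (\mathcal{O}_X,d) \to 0$ and write each $A(e)$ in block upper-triangular form in a basis compatible with the filtration. The Baker--Campbell--Hausdorff formula splits $\log A(e)$ into a diagonal part, whose $M'$-block is $\log A'(e)$ and hence sums to zero over $e^+=v$ by induction, and an off-diagonal part valued in the fiber of $M'$ at $x_v$. This off-diagonal part is a Vologodsky iterated integral encoding the coupling between the extension class of $(M,\nabla)$ and parallel transport by $(M',\nabla')$ along the four-legged loop.

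The main obstacle, and the heart of the matter, is proving harmonicity of this off-diagonal iterated integral. The strategy is a matrix-valued, iterated version of Proposition~\ref{vol-mer}: one parses the iterated integral at each annulus $e$, extracts the $\log(\pi)$-dependent contribution (which takes the form of a residue of an auxiliary meromorphic one-form with coefficients in the fiber of $M'$), and then applies the residue identity~\eqref{nharmonic} on the basic wide open $U_v$. This residue-and-harmonicity mechanism is essentially the content of~\cite{Kat-Lit21} in its original formulation; the equivalence with the loop-monodromy statement presented here is the combinatorial repackaging attributed above to Katz's private communication.
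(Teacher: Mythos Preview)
The paper does not give a proof of this theorem at all: it is stated as a reformulation of the results of~\cite{Kat-Lit21}, with the equivalence between the two formulations attributed to a private communication from Katz. So there is no argument in the paper for you to be compared against, and your proposal is likewise not a self-contained proof, since in the end you also defer the substantive step back to~\cite{Kat-Lit21}.

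That said, two remarks on the sketch itself. Your $n=2$ computation is correct and is precisely the sense in which the paper calls the statement ``a direct generalization of~\cite{Bes-Zer13}'': for a rank~$2$ unipotent connection the off-diagonal entry of $A(e)$ is $c_\omega(e)$, and harmonicity is Theorem~\ref{beszer}. The inductive step, however, is looser than you present it. First, for a block upper-triangular unipotent $A(e)=\left(\begin{smallmatrix}A'(e)&B(e)\\0&1\end{smallmatrix}\right)$ one has $\log A(e)=\left(\begin{smallmatrix}\log A'(e)&\;\frac{\log A'(e)}{A'(e)-1}B(e)\\0&0\end{smallmatrix}\right)$, so the off-diagonal piece of $\sum_{e^+=v}\log A(e)$ is not simply $\sum_{e^+=v}B(e)$ but a twisted sum mixing $A'(e)$ and $B(e)$; Baker--Campbell--Hausdorff does not decouple the blocks in the way your paragraph suggests. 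Second, your proposed mechanism for the hard part --- ``extract the $\log(\pi)$-dependent contribution'' in the style of Proposition~\ref{vol-mer} --- runs in the wrong direction. Harmonicity of $e\mapsto\log A(e)$ is meant to hold for every fixed branch, and in this paper it is the \emph{input} that makes the later $\log(\pi)$-derivative formulas (Proposition~\ref{vol-mer}, Theorem~\ref{deriter}) work, not a consequence of them. What one actually needs is a matrix-valued residue theorem on each wide open $U_v$ in the spirit of~\eqref{nharmonic}, applied to the connection matrix rather than to a scalar form; that is indeed the content of~\cite{Kat-Lit21}, but it is not derivable from Proposition~\ref{vol-mer}.
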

In~\cite{BMS23} we use this result to derive a formula for the derivative
with respect to \( \log \pi \) of certain iterated Vologodsky integrals.
\begin{theorem}[{\cite{BMS23}}]\label{deriter}
  Suppose \( \omega \) and \( \eta \) are two forms of the second kind on
  \( X \). Let
\[
  F= \int(\omega \int
\eta)
\] 
  with respect to the universal branch. The funcion
\[
 \frac{d}{d \log (\pi)} F
\]
 is constant on
  each domain \( U_v \)
  except near the singularities of \( \omega \) or \( \eta \), and
  associating this constant with the vertex \( v \) one gets a function on
  the dual graph which satisfies \[
   \Delta \frac{d}{d \log (\pi)} F_\omega (v) 
  = \frac{1}{2}\sum_{e^+=v} c_{\eta}(e) \res_e \omega- c_{\omega}(e)\res_e \eta -
  \sum_{e^+=v} \pair{\int \eta,\int \omega}_e \;.
  \] 
  Here, \( c_{\omega} \) and \( c_{\eta} \) are the harmonic cochains
  associated with \( \omega \) and \( \eta \) in Theorem~\ref{beszer} (with
  respect to the canonical branch) and the last term is the local index on an
  annulus (see~\cite{Bes98b}).
\end{theorem}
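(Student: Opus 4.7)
The plan is to generalize the proof of Proposition~\ref{vol-mer} by replacing the scalar harmonicity $d^\ast N\omega(v)=\res_{U_v}\omega$ with the full Katz--Litt harmonicity of the monodromy matrices $A(e)$ of the rank-two unipotent connection underlying $F=\int(\omega\int\eta)$. Concretely, I would fix Coleman primitives $G^v$ of $\eta|_{U_v}$, and then iterated Coleman primitives $F^v$ of $\omega\cdot G^v$ on each $U_v$, choosing constants of integration arbitrarily. The goal is to show that the resulting transition cochain $c(e)=F^{e^-}|_e - F^{e^+}|_e$ can be decomposed as a Katz--Litt-harmonic part plus $d\gamma$, and that in this decomposition the dependence of $\gamma$ on $\log(\pi)$ is precisely the function whose Laplacian gives the displayed right-hand side.

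First I would compute the $\log(\pi)$-dependence of $c(e)$ on an annulus $e$ by expanding $\omega$ and $G^v$ as Laurent series in a coordinate $z$ with $|\pi|<|z|<1$, then re-expanding from the other side via $w=\pi/z$. Since $\omega$ and $\eta$ are of the second kind, the simple integrals themselves carry no $\log(\pi)$ (as already established in the one-form proof of Corollary~\ref{hol-indep}); the new effect is quadratic, coming from cross terms where the ``$\log z$''-piece of one factor meets the ``$\log w$''-piece of the integration from the opposite side. Pairing these against the residue pieces of the other form yields, up to symmetrization, exactly the combinations $c_\eta(e)\res_e\omega - c_\omega(e)\res_e\eta$, together with a remainder which one recognizes as the local index $\langle\int\eta,\int\omega\rangle_e$ of~\cite{Bes98b}. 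This identifies the $\log(\pi)$-variation of $c(e)$ on each edge with the combination appearing under the edge sum in the statement.

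Next, by the reinterpreted Katz--Litt theorem the corrected cochain
\[
\tilde c(e)=c(e)-\bigl(\log(\pi) - \log'(\pi)\bigr)\cdot\text{(the above edge combination)}
\]
is harmonic: the expression $\sum_{e^+=v}\log A(e)=0$ is exactly the vanishing of $d^\ast$ applied to the edge combination above. Applying the harmonic decomposition~\eqref{harm-dec} and Lemma~\ref{easyobs}, the vertex function $\gamma$ whose coboundary corrects $c$ to its harmonic part is $\gamma=\Delta^{-1}d^\ast c$, and its $\log(\pi)$-derivative therefore satisfies $\Delta\,\tfrac{d}{d\log(\pi)}\gamma(v)$ equal to $d^\ast$ of the explicit edge combination at $v$, which is exactly the right-hand side of the theorem (with the factor $\tfrac12$ coming from symmetrization between the two contributions $c_\eta\res\omega$ and $c_\omega\res\eta$). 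Since $F^v$ is then $\log(\pi)$-independent on $U_v$ away from the singularities of $\omega$ and $\eta$ (the only $\log(\pi)$-dependence sits in the constants $\gamma(v)$), the derivative $\tfrac{d}{d\log(\pi)}F$ is locally constant with the asserted Laplacian.

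The main obstacle will be the second step: unwinding the Katz--Litt monodromy matrix $A(e)$ on an annulus into its constituent scalar pieces and matching them against the local-index formalism of~\cite{Bes98b}. This requires a careful bookkeeping of iterated Coleman integrals on the annulus, parametrizing both from the inside and outside, and tracking how the changes of variable $z\mapsto\pi/w$ contribute $\log(\pi)$-multiples to the coefficients that read off as $\langle\int\eta,\int\omega\rangle_e$. The scalar case (one-form, Proposition~\ref{vol-mer}) is a clean guide, but the genuinely new non-scalar structure forces one to invoke the full Katz--Litt harmonicity rather than the ad hoc harmonicity of $N\omega$.
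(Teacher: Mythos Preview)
The paper does not actually prove this theorem: it is stated with a citation to~\cite{BMS23}, which is listed as ``Work in progress, 2023'', and no argument is given beyond the remark that the reinterpreted Katz--Litt theorem is the tool used to derive it. So there is no proof in the paper to compare your proposal against.

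That said, your proposal is entirely in line with what the paper indicates. The paper explicitly positions Theorem~\ref{deriter} as a consequence of the Katz--Litt harmonicity statement, obtained by generalizing the scalar analysis of Proposition~\ref{vol-mer} and the annulus computation preceding~\eqref{cdeplog}. Your plan follows exactly this template: pass to the rank-two unipotent connection carrying $\int(\omega\int\eta)$, compute the transition cochain across each annulus by expanding from both sides and tracking the $\log(\pi)$-contributions under $z\mapsto \pi/w$, then invoke Katz--Litt harmonicity in place of the scalar identity~\eqref{nharmonic} to feed into Lemma~\ref{easyobs}. The appearance of $c_\eta(e)\res_e\omega$, $c_\omega(e)\res_e\eta$, and the local index $\langle\int\eta,\int\omega\rangle_e$ is exactly what one expects from such an annulus computation, and your identification of the bookkeeping of $A(e)$ against the local-index formalism as the genuine work is accurate.

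One small caution: your sentence ``the simple integrals themselves carry no $\log(\pi)$'' needs to be read carefully. The Coleman primitives $G^v$ of $\eta|_{U_v}$ on the wide open $U_v$ do acquire $\log$-terms on the boundary annuli (since $\res_e\eta$ need not vanish even for $\eta$ of the second kind), and it is precisely these terms, under $z\mapsto\pi/w$, that generate the $\log(\pi)$-dependence you want. What is branch-independent is the \emph{Vologodsky} primitive of $\eta$ on $X$, because $N\eta$ is harmonic; but the local Coleman data on the annuli still see $\log(z)$. Keeping this distinction straight is essential when you unwind $A(e)$.
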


\section{$p$-adic heights}\label{ss-loch}
In this section we report on work with 
M\"uller and Srinivasan~\cite{BMS21}. In this work we describe a \( p \)-adic
analogue to the theory of heights associated to line bundles with adelic
metrics~\cite{Zhan95}. Our main interest is in applications to the problem of
finding rational point on curves through the method known as ``Quadratic
Chabauty''~\cite{BaBeMu12,Bal-Dog16,Bal-Dog17}, but the theory of heights we develop is of independent
interest. For analogy, let us first quickly review the classical theory.

Let \( F \) be a number field with a system of absolute values \( |~|_\nu
\) satisfying the product formula.
Let \( X \) be a smooth complete variety over \( F \).
\begin{definition}
  An adelic line bundle on \( X \) is a line bundle \( \L \) over \( X \)
  together with a collection of norms, \( \|~\|_\nu \) on the line bundle
  \( \L \otimes F_\nu \), for each place,
  finite or infinite, \( \nu \) of \( F \). This collection needs to
  satisfy certain conditions, which guarantee, among other things,
  that the associated height \[
    h_{\hat{L}}: X(F)\to \mathbb{R}, \quad h_{\hat{L}}(x)= \sum_\nu
    \log \|w\|_\nu
  \] 
  is a finite sum, where \( w \) is any vector in the fiber of \( L \) over
  \( x \).
\end{definition}
From the product formula it is immediate to see that the height indeed
depends only on \( x \) and not on the particular choice of \( w \). We
avoid discussing conditions on the norms for ease of presentation, since we
will mostly be concerned with the \( p \)-adic theory, and since canonical
norms that we want to study satisfy these automatically.

Following Tate's definition of canonical heights for Abelian varieties, one can
attempt to obtain canonical heights in various situations. The following
scenario, which we dub ``the dynamical situation'', is considered by
Zhang~\cite{Zhan95}(see also~\cite{Cal-Sil93}).
Suppose that $f\colon X\to X$ is an endomorphism $X$ and the line bundle \(
\L \) is such that there exists an integer \( d>1 \) and an isomorphism
\begin{equation}\label{eq:dyn-isom}
\beta\colon \L^{\otimes d
}\to f^\ast \L\;.
\end{equation}
Zhang constructs, at each place \( \nu \), a canonical norm on $\L_v$ for which
$\beta$ is an isometry (we will say that the metric is \emph{compatible}
with the dynamical situation), by starting with
any norm $\|\cdot\|_\nu$ and repeatedly replacing $\|\cdot\|_\nu$ with $(\beta^{-1}f^\ast
\|\cdot\|_\nu)^{\frac{1}{d}}$ and taking a limit. The resulting local
metrics turn out to give an adelic metric. In the case where \( X \) is an abelian variety, \( f:X \to X \) is the
multiplication by \( 2 \) map and \( \L \) is a symmetric line bundle on \(
X\), one gets the Tate canonical height associated with \( \L \).  

The \( p \)-adic analogue of the above picture is considered
in~\cite{BMS21}. The \( p \)-adic analogue of the system of absolute values
satisfying the product formula is a continuous id\`ele class character
 \begin{equation*}
   \chi = \sum_\nu \chi_\nu \colon  \mathbb{A}_F^\times/F^\times \to
   \Q_p\;.
 \end{equation*}
The local components \( \chi_\nu \) are the analogues of the logs of the absolute
values and the fact that the character vanishes on \( F^\times  \) is the
analogue of the product formula. Note the important difference from the
classical case: The primes above \( \infty \) play no role. 

For $\nu\mid p$, there is a decomposition
  \begin{equation}\label{tdefnd}
    \xymatrix{
      {F_\nu^\times}  \ar[rr]^{\chi_{\nu}} \ar[dr]^{\log_\nu} & &   \Q_p.\\
      & K_\nu\ar[ur]^{t_\nu}
  }
  \end{equation} 
for some $\Q_p$-linear trace map $t_\nu$ and some choice of a branch \(
\log_\nu \) of the \( p \)-adic logarithm. 
In analogy with the classical case we can define local norms, or rather,
logs of local norms, on a line bundle \( L \), to be functions \( \ell_\nu \) on the total
space of \( L \) minus the zero section, which satisfy the relation \[
 \ell_\nu(\alpha w) = \chi_\nu(\alpha)+\ell_\nu(w) \;.
\] 
However, at all places \( \nu \) not above \( p \) one has the property \(
\chi_\nu(\alpha)= \nu(\alpha) \chi(\pi_\nu)\), where \( \pi_\nu \) is a
uniformizer at \( \nu \) and we write \( \nu \) also for the associated
valuation, so
that it makes sense to divide the function by \( \chi_\nu(\pi_\nu) \). With this in mind
we make the following.
\begin{definition}
  Let \( K \) be a finite extension of \( \mathbb{Q}_q \), for some finite
  prime \( q \). Let \( X \) be a variety over \( K \) and let \( \L \) be
  a line bundle over \( X \). Then, a \( p \)-adic metric on \( \L \)
  is:
\begin{itemize}
  \item For \( q=p \) a \emph{log function} 
    \begin{equation*}
      \log: \L^{\times }: \L - 0 \to \overline{K}
    \end{equation*} 
    A function satisfying $\log(\alpha w)=
    \log_\nu(\alpha)+\log(w)$, $\alpha\in K^{\times }$, $w\in \L_x$, where
    \( \log_\nu \) is the branch of the \( p \)-adic logarithm determined
    by~\eqref{tdefnd}. 
    \item For $q\ne p$  a \emph{valuation} 
    \begin{equation*}
      \val: \L^{\times } \to \Q
    \end{equation*} 
    satisfying $\val(\alpha w)=
    \nu(\alpha)+\val(w)$
    where $\nu$ is the valuation on $K^{\times }$, normalized so that \(
    \nu(\pi_\nu)=1 \).
\end{itemize}
Let \( F \) be a number field, \( X \) a variety over \( F \) and \( \L \)
a line bundle over \( X \). An adelic metric on \( \L \) is a collection of
metrics for the completions of \( \L \) over all finite places of \( F \),
  satisfying a certain technical condition (that the valuation is almost
  always a ``model valuation'').
A line bundle \( \L \) over \( X \) together with an adelic metric will be
called an adelically metrized line bundle, usually denoted \( \hat{\L} \). 
 The $p$-adic height $h_{\hat{\L}}: X(F) \to \Q_p$ associated to the
  adelically metrized line bundle \( \hat{\L} \) is given by the formula \[
 h_{\hat{L}}(x)=
    \sum_{\nu | p} t_\nu \log_\nu(w)
    +\sum_{\nu \nmid p} \val_\nu(w) \cdot \chi_\nu(\pi_\nu)
  \] 
  where \( w \) is any vector in the fiber of \( \L \) over \( x \).
\end{definition}
As in the classical case, it is obvious that the definition of the height
is independent of the choice of \( w \).
\begin{remark}\label{gen-val}
  One can define valuations that take values in other groups. This will be
  useful later on (see Proposition~\ref{Volvaldef}). For the global theory and the construction of
  heights, only \( \mathbb{Q} \)-valued valuations are useful.
\end{remark}

The part of the theory which involves finite primes different from \( p \),
is in many respects joint between the \( p \)-adic and the classical case.
Indeed, given a valuation \( \val_\nu \), where \( \nu \) is a place above
a finite prime \( q\ne p \), such that the associated absolute value is \(
|~|_\nu = c^{\nu(~)} \) for some \( c\in (0,1) \), the function \[
  \|~\|_\nu = c^{\val_\nu(~)} 
\] 
is a norm in the classical sense. Clearly, it is not the most general type
of norm. However, it turns out that in many situations the norms one
considered are in fact of this form. Conversely, if we obtain a norm taking
values in \( c^{\Q}  \), then taking the log with respect to \( c \) gives
us a valuation.

Let us turn now to the situation above \( p \). As we mentioned for the
classical theory, defining heights using metrics can be done without any
serious restrictions on the type of metric used. However, in practice one
imposes some restrictions on the norms. In the \( p \)-adic situation, this
is both a restriction and a good way of constructing metrics. Recall that
if \( K \) is a finite extension of \( \mathbb{Q}_p \), and \( \L / X \) is
a line bundle over a smooth variety over \( K \), then a \( p \)-adic
metric on \( \L \) consists of a log function on the total space of \( \L
\) minus the zero section. It is natural to ask that this log function is a
Vologodsky function. We in fact ask for a far stronger condition: that log
is at most a second iterated Vologodsky integral. Let us call such log functions \emph{admissible} (note,
in~\cite{Bes00}
we called these log functions and what we now call log functions we called
there Pseudo log functions). The key result concerning these functions, allows
their characterization as well as construction using the notion of
curvature. This is described in the following Theorem.

\begin{theorem}[{\cite[Proposition~4.4]{Bes00}}]\label{log-curv}
Suppose \( X \) is proper and let \[
  \cup: \hdr^1(X / K)\otimes \Omega^1(X) \to \hdr^1(X /K)
  \] 
  be the cup product map. Suppose
 $ch_1(\L)$ is in the image of \( \cup \). Then one can associate to each
  admissible log function on \( \L \) a \emph{curvature form}, 
$\alpha\in \hdr^1(X / K)\otimes \Omega^1(X)$, such that
  $\cup(\alpha)=ch_1(\L)$ Conversely, any curvature form $\alpha$ cupping to 
$ch_1(\L)$ is the curvature of an admissible log function on $\L$, unique up to an
integral of $\omega\in \Omega^1(X)$.
\end{theorem}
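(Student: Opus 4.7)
The approach is to combine the local structure of admissible Vologodsky functions, filtered by iteration depth, with the \v{C}ech cocycle data defining \( \L \) and its first Chern class. Admissibility forces \( \log \) to have a very rigid local shape, whose ``depth-two symbol'' should project canonically to an element \( \alpha \) of \( \hdr^1(X/K) \otimes \Omega^1(X) \), and cupping that symbol should recover \( ch_1(\L) \).

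In detail, for the forward direction I would cover \( X \) by affine opens \( U \) over which \( \L \) admits a trivializing section \( s_U \), so that \( \log s_U \) becomes an admissible Vologodsky function on \( U \) of iteration depth at most two. Choose a presentation \( \log s_U = \int (F_U\, \omega_U) + (\text{depth}\le 1) \) with \( F_U = \int \eta_U \) of depth one. On overlaps one has \( \log s_V - \log s_U = \log g_{UV} \), which is of depth one, and this forces the classes \( [\eta_U] \otimes \omega_U \) to patch, modulo depth-one modifications, into a single element \( \alpha \in \hdr^1(X/K) \otimes \Omega^1(X) \). To identify \( \cup(\alpha) \) I would pass to the \v{C}ech--de Rham bicomplex: the local representatives \( \eta_U \wedge \omega_U \) together with the transition data \( dg_{UV}/g_{UV} \) (the usual Chern cocycle for \( \L \)) assemble into a total cocycle representing \( \cup(\alpha) \), which by construction equals \( ch_1(\L) \).

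For the converse, given \( \alpha \) with \( \cup(\alpha) = ch_1(\L) \), I would run the construction in reverse: pick depth-one primitives \( F_i = \int \eta_i \) of representatives of \( \alpha \), integrate \( \sum F_i\, \omega_i \) locally, and use the hypothesis to arrange that the mismatches on overlaps are precisely \( \log g_{UV} \) for the transition cocycle of \( \L \), hence globally coherent after adding the standard logs. For uniqueness, two admissible log functions with the same curvature differ by a globally defined depth-one Vologodsky function on the proper variety \( X \); by Proposition~\ref{abvar} such a function is \( \int \omega \) for some \( \omega \in \Omega^1(X) \), giving the claimed ambiguity.

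I expect the principal obstacle to be showing that the depth-two symbol \( \alpha \) is genuinely canonical, independent of the chosen presentation. Second iterated Vologodsky integrals admit many rewritings---integration by parts, changes in the choice of primitive \( F_U \), and absorption of depth-one corrections---and one must verify that every such ambiguity leaves \( [\eta_U] \otimes \omega_U \) unchanged in \( \hdr^1 \otimes \Omega^1 \). Essentially this amounts to identifying the associated graded for the depth filtration on \( \acol(X) \) (or its Vologodsky analogue) with a bar-type object built from \( \hdr^1(X/K) \) and \( \Omega^1(X) \); once that symbol calculus is in place, the rest of the argument reduces to routine \v{C}ech--de Rham bookkeeping.
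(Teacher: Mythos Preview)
The paper does not prove this theorem. Immediately after stating it, the author writes: ``We will not explain the construction of the curvature or the proof of the theorem.'' The result is quoted from \cite[Proposition~4.4]{Bes00}, and the present paper only records the statement and then, in Theorem~\ref{logdima}, gives the explicit description in the curve case. So there is no proof here to compare your proposal against; any real comparison would have to be with the argument in \cite{Bes00}.

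As a standalone sketch, your approach is plausible in outline but has one structural issue worth flagging. You propose to extract the curvature by localizing over an affine cover, writing $\log s_U$ as a depth-$\le 2$ Vologodsky function, and then gluing the local symbols $[\eta_U]\otimes\omega_U$. But the curvature in \cite{Bes00} is not obtained by \v{C}ech gluing of locally chosen symbols; rather, a log function is a global Vologodsky (or Coleman) function on the total space $\L^\times$, and its differential $d\log_{\L}$ is already a globally defined Vologodsky $1$-form whose ``second-order part'' lives in $\Omega^1(X)\otimes\hdr^1(X/K)$ intrinsically (compare the formula $d\log_{\L}(s)=\sum\omega_i\int\eta_i+\gamma$ in Theorem~\ref{logdima}). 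Your worry about well-definedness of the depth-two symbol is exactly the right one, but the resolution is not a \v{C}ech argument: it is the identification of the associated graded of the depth filtration on $\acol$ with tensor powers of $\hdr^1$, which is a global statement about the structure of Coleman/Vologodsky functions. Once you have that, the curvature is read off globally from $d\log_{\L}$ without any patching, and the equality $\cup(\alpha)=ch_1(\L)$ follows because the residual ``depth-one'' piece $\gamma$ has residue divisor $\operatorname{div}(s)$. Your converse and uniqueness arguments are essentially correct in spirit.
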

We will not explain the construction of the curvature or the proof of the
theorem. An explicit description of a log function with a given curvature
is not so easy in general. However, for the case of
curves, a fairly easy description can be given as follows.
\begin{theorem}[{\cite[Section~3]{BMS21}}]\label{logdima}
  Let \( \L \) be a line bundle over a complete curve \( X \) over \( K \)
  and let \( \alpha \) be a curvature form cupping to \( ch_1(\L) \). Fix a
  rational section \( s \) of \( \L \). To determine a log function on \(
  \L \)  it suffices to determine \( \log (s)
  \). 
  Let $\alpha = \sum_i \omega_i \otimes[ \eta_i] \in  \Omega^1(X) \otimes \hdr^1(X)$ 
 for some holomorphic forms $\omega_i \in \Omega^1(C)$ and forms of the
  second kind $\eta_i$ with corresponding cohomology classes $[\eta_i] \in
  \hdr^1(C)$. Then, the formula
\begin{equation}\label{E:logfromitint} d \log_{\L}(s) = \sum \omega_i \int
\eta_i + \gamma,
\end{equation}
determines a log function on \( \L \) with curvature \( \alpha \). Here, 
$\gamma$ is any meromorphic form with the property that the right-hand
  side has only simple poles on \( X \) and its residue divisor is exactly
  \( \operatorname{div}(s) \)
\end{theorem}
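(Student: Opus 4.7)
First I would deal with the reduction in the opening sentence. Any nonzero vector $w \in \L_x$ with $x \notin \operatorname{supp}(\operatorname{div}(s))$ is of the form $\alpha \, s(x)$ for some $\alpha \in K^\times$, so the defining transformation property $\log(\alpha w) = \log_\nu(\alpha) + \log(w)$ forces $\log(w) = \log_\nu(\alpha) + \log(s(x))$; knowing the Vologodsky function $\log(s)$ therefore determines $\log$ on the complement of $\operatorname{div}(s)$. To extend across a point $x_0 \in \operatorname{supp}(\operatorname{div}(s))$, pick another rational section $s'$ nonvanishing at $x_0$ and a local factorization $s' = f s$ for a rational function $f$, and set $\log(s') := \log_\nu(f) + \log(s)$ where it is defined; by the assumed singular behaviour of $\log(s)$ this assembles into a Vologodsky function on a neighborhood of $x_0$, and values on arbitrary vectors in $\L^\times$ over such $x_0$ follow from the transformation rule applied to $s'$.

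Next I would show that the recipe \eqref{E:logfromitint} makes sense and produces a log function. Because $X$ is one-dimensional there are no nonzero 2-forms, so any expression of the shape $\sum_i \omega_i \int \eta_i + \gamma$ with $\omega_i$ holomorphic and $\eta_i$ of the second kind is automatically closed; Vologodsky integration then yields a function $\log_\L(s)$ on $X$ minus the poles of $\gamma$, and it is at most a second iterated Vologodsky integral, i.e.\ admissible in the sense of \cite{Bes00}. Near a point $x_0$ where $s$ has order $n$, the only singularity of the right-hand side is the simple pole of $\gamma$ with residue $n$, by the hypothesis on $\gamma$. Its Vologodsky primitive therefore has the local form $n \log(z) + (\text{Vologodsky-regular})$ for a uniformizer $z$ at $x_0$, which is exactly the behaviour demanded of $\log(s)$: writing $s = z^n s_0$ for a local nonvanishing $s_0$, we have $\log(s) = n \log_\nu(z) + \log(s_0)$ as a consequence of the transformation property applied fibrewise.

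Finally I would verify that the curvature equals $\alpha$. The key structural input from \cite[Proposition~4.4]{Bes00}, used in Theorem~\ref{log-curv}, is that the curvature of an admissible log function is read off from the iterated-integral portion of $d\log_\L(s)$: writing $d \log_\L(s)$ as (iterated-integral part) plus (meromorphic correction), the expression $\sum_i \omega_i \otimes [\eta_i] \in \Omega^1(X) \otimes \hdr^1(X/K)$ coming from the iterated part is a well-defined element (modulo the ambiguity by $d$ of $\Omega^1(X)$-valued things, which corresponds exactly to the ambiguity $\omega \mapsto \omega + d(\text{holomorphic})$ in the classes $[\eta_i]$), and the purely meromorphic $\gamma$ contributes nothing to this class. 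Since our construction produces $d\log_\L(s)$ already in the desired normal form with iterated-integral part $\sum_i \omega_i \int \eta_i$, the curvature is exactly $\alpha$.

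I expect the main obstacle to be the last step: one needs to carefully invoke the definition of curvature from \cite{Bes00} and check that the meromorphic correction $\gamma$, whose addition is forced by the residue requirement, indeed does not alter the curvature class. Once one has translated curvature into ``take the iterated-integral coefficients modulo exact forms'' this is essentially formal, but it requires unpacking the Tannakian definition of admissibility and its relationship to $\hdr^1(X/K) \otimes \Omega^1(X)$, which is the substantive content being imported from the earlier work.
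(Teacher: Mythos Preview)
The paper does not include a proof of this theorem; it is stated with a citation to \cite[Section~3]{BMS21}, and the only argument supplied in the present paper is the remark immediately following the statement, namely that the freedom in choosing $\gamma$ (addition of a holomorphic form) matches exactly the freedom in the log function predicted by Theorem~\ref{log-curv}. There is therefore no ``paper's own proof'' to compare against.

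That said, your sketch is a reasonable outline of how the argument runs. The reduction to knowing $\log(s)$ via the transformation law is correct; the closedness of the right-hand side of \eqref{E:logfromitint} on a curve is automatic since $\Omega^2(X)=0$; the local residue analysis showing that the primitive has a singularity of the form $n\log(z)$ at a point where $s$ has order $n$ is exactly what is needed for the transformation rule to produce a genuine log function on $\L^\times$; and your identification of the curvature computation as the step requiring the most input from \cite{Bes00} is accurate. The one observation the paper does make---that changing $\gamma$ by a holomorphic form changes $\log_\L(s)$ by the integral of a holomorphic form, consistent with Theorem~\ref{log-curv}---is a sanity check you might also record, since it confirms that the meromorphic correction $\gamma$ carries no curvature information, which is precisely the point you flag as the main obstacle.
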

We observe that the degree of freedom in choosing \( \gamma \) is exactly
adding a holomorphic form, so the degree of freedom in the log function is
exactly adding the integral of a holomorphic form, which is exactly the
degree of freedom in Theorem~\ref{log-curv}. 

If we try to copy the construction of canonical norms in Dynamical
situations to the \( p \)-adic setting, we immediately observe a problem:
The limiting procedure defining the norms will not converge \( p
\)-adically. Thus, another idea is required for finding canonical metrics.

For \( q\ne p \), one can hope that a canonical norm obtained through the
limiting procedure over \( \mathbb{R} \) actually takes values in some \(
c^{\mathbb{Q}}  \), hence a valuation may be obtained from it. This will
not work for \( q=p \), where the log function is truly \( p \)-adic.

Instead of using limiting methods, one may use the theory of the curvature,
Theorem~\ref{log-curv}. Consider then \( K \) a finite extension of \(
\mathbb{Q}_p \), a complete smooth variety \( X \) over \( K \), a line
bundle \( \L \) over \( X \), an endomorphism \( f:X\to X \) and an
isomorphism~\eqref{eq:dyn-isom}. If the log function \( \log_{\L} \) with
curvature \( \alpha \) makes \( \beta \) an isometry, then we have
\begin{equation}\label{eq:dyn-curve}
  f^\ast \alpha = d \alpha
\end{equation}
This is consistent with the fact that \( f^\ast ch_1(\L) = d ch_1(\L)\),
which needs to hold since \( \alpha \) cups to \( ch_1(\L) \). Conversely,
suppose that~\eqref{eq:dyn-curve} holds with \( \alpha \) a curvature form
cupping to \( ch_1(\L) \). Then, for any log function on \( \L \) with
curvature \( \alpha \), which exists by Theorem~\ref{log-curv}, The two log
functions, \( d \log_{\L} \) and \( \beta^{-1} f^\ast \log_{\L}  \), on \(
\L^{\otimes d} \), have the same curvature and therefore, again by
Theorem~\ref{log-curv}, defer by the integral of a holomorphic form on \( X
\). At this point one can study how this form is changed when \( \log_{\L}
\) is changed by the integral of a holomorphic form, and, depending on the
action of \( f^\ast \) on \( \Omega^1(X) \), it's often possible to make
\(\beta\) an isometry as required.

One case of the above was key to~\cite{BMS21}. There we construct canonical
metrics on a symmetric line bundle \( \L \) on an abelian variety \( X \).
In this case \( f \) is the multiplication by \( 2 \) map and \( d=4 \).
For further details see~\cite[4.2.1]{BMS21}.

At this point we apply the general theme of this work. Up to now we defined
the \( p \)-adic metric above \( p \) with a fixed choice of a branch,
dictated by the id\`ele class character \( \chi \).
Now, instead, we will consider the metric as a Vologodsky function with
respect to the universal branch of the logarithm, and differentiate with
respect to \( \log (p) \). As we will see, this gives valuations, in the
more general sense of Remark~\ref{gen-val}, i.e., with values in more
general groups than the additive group of the rationals.
\begin{proposition}\label{Volvaldef}
  Let \( \log_{\L} \) be a log function with the universal branch of the
  logarithm. Then \( d / d \log (\pi) \log_{\L} \) is a \( \overline{K}
  \)-valued valuation on \( \L \). We call it the \emph{Vologodsky
  valuation associated with the log function}.
\end{proposition}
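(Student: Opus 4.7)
The plan is to simply differentiate the defining transformation law of the log function with respect to $\log(\pi)$ and apply the fundamental observation~\eqref{key-thing} to read off the valuation property.

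First I would set up the universal-branch viewpoint. Working with the universal branch means that $\log_{\L}$ is a Vologodsky function on the total space $\L^\times$ taking values, upon evaluation at $\overline{K}$-points, in $\overline{K}[\log(\pi)]$. The defining relation on any $\L$-fiber still holds at this universal level: for $\alpha\in K^\times$ and $w\in \L^\times_x$,
\begin{equation*}
\log_{\L}(\alpha w) = \log(\alpha) + \log_{\L}(w),
\end{equation*}
where $\log(\alpha)$ is now the universal $p$-adic logarithm. Writing $\alpha = u\pi^{\nu(\alpha)}$ with $u$ a unit gives $\log(\alpha) = \log(u) + \nu(\alpha)\log(\pi)$, so this identity is a polynomial identity in the formal variable $\log(\pi)$, with coefficients in $\overline{K}$ (after evaluation at a point).

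Second, I would apply the derivation $\frac{d}{d\log(\pi)}$, viewed as a $K$-linear derivation on $\overline{K}[\log(\pi)]$ extended to Vologodsky functions as in Section~\ref{sec:colvol}. Functoriality~\eqref{eq:fun} ensures the derivation commutes with the pullback by scaling $w \mapsto \alpha w$, so
\begin{equation*}
\frac{d}{d\log(\pi)}\log_{\L}(\alpha w) \;=\; \frac{d}{d\log(\pi)}\log(\alpha) \;+\; \frac{d}{d\log(\pi)}\log_{\L}(w).
\end{equation*}
The fundamental observation~\eqref{key-thing}, applied to the uniformizer $\pi$ in place of $p$ (using $\log(\alpha) = \log(u) + \nu(\alpha)\log(\pi)$ with $\log(u)$ independent of $\log(\pi)$), simplifies the middle term to $\nu(\alpha)$. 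Evaluating at $\log(\pi)=0$ as prescribed in Section~\ref{sec:colvol}, we obtain an $\overline{K}$-valued function on $\L^\times$ satisfying
\begin{equation*}
\frac{d}{d\log(\pi)}\log_{\L}(\alpha w) \;=\; \nu(\alpha) + \frac{d}{d\log(\pi)}\log_{\L}(w),
\end{equation*}
which is precisely the defining identity of a valuation on $\L$, with target the additive group $\overline{K}$ as permitted by Remark~\ref{gen-val}.

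The only real point requiring attention is the compatibility of $\log_{\L}$ with the universal branch, i.e.\ that the transformation law really holds as an identity in $\overline{K}[\log(\pi)]$ rather than merely at each specialization. This is forced: by construction the specialization of $\log_{\L}$ at $\log(\pi)=\log_\nu(\pi)$ satisfies the usual log-function identity with $\log_\nu(\alpha)$, and since $\log_{\L}$ is at most a second iterated Vologodsky integral, it is polynomial of bounded degree in $\log(\pi)$, so a polynomial identity that holds on all specializations holds universally. No further delicacy is involved; the proof is essentially the two-line differentiation above.
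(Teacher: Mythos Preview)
Your argument is correct and is exactly the approach the paper takes: the paper's proof is the single line ``This is immediate from~\eqref{key-thing},'' and what you have written is a careful unpacking of that line, differentiating the transformation law $\log_{\L}(\alpha w)=\log(\alpha)+\log_{\L}(w)$ and invoking~\eqref{key-thing} to identify the derivative of $\log(\alpha)$ with the valuation.
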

\begin{proof}
  This is immediate from~\eqref{key-thing}.
\end{proof}
\begin{proposition}
 The Vologodsky valuation associated with a log function on \( \L \)
  depends only on the associated curvature \( \alpha \). Thus, we may
  denote it \( \val_\alpha \). This valuation is defined for every
  curvature form \( \alpha \) cupping to \( ch_1(\L) \).
\end{proposition}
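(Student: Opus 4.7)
The strategy is to combine the uniqueness statement in Theorem~\ref{log-curv} with the vanishing result in Corollary~\ref{hol-indep}. Two admissible log functions on \( \L \) with the same curvature \( \alpha \) differ by the pullback, along the projection \( \pi: \L^\times \to X \), of a Vologodsky integral \( F = \int \omega \) of a holomorphic form \( \omega \in \Omega^1(X) \). Since \( X \) is proper, Corollary~\ref{hol-indep} gives
\[
\dlp F = 0
\]
as a function on \( X \). By the functoriality property~\eqref{eq:fun} applied to \( \pi \), we obtain
\[
\frac{d}{d \log(\pi)}(\pi^\ast F) = \pi^\ast\!\left(\frac{d}{d \log(\pi)} F\right) = 0
\]
on \( \L^\times \). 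Hence the difference of the two Vologodsky valuations vanishes, and \( \val_\alpha \) depends only on \( \alpha \).

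For the second assertion, Theorem~\ref{log-curv} asserts precisely that every curvature form \( \alpha \) in \( \hdr^1(X/K) \otimes \Omega^1(X) \) cupping to \( ch_1(\L) \) is realized as the curvature of some admissible log function on \( \L \); applying Proposition~\ref{Volvaldef} to any such log function produces the valuation, which is independent of the choice by the preceding paragraph. Thus \( \val_\alpha \) is well-defined for every admissible \( \alpha \).

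The only delicate point is the justification that a function of the form \( \pi^\ast F \) on the non-proper variety \( \L^\times \) really has vanishing \( \log(\pi) \)-derivative. This is not a direct application of Corollary~\ref{hol-indep}, which requires properness, but rather of its combination with the functoriality~\eqref{eq:fun}: the derivative is computed on \( X \) first, where properness applies, and then pulled back. This is the main (and essentially only) subtlety in the argument.
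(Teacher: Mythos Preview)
Your proof is correct and follows the same approach as the paper's one-line proof (``This is clear from Corollary~\ref{hol-indep}''); you have simply spelled out the implicit steps, namely the use of Theorem~\ref{log-curv} for the ambiguity and functoriality~\eqref{eq:fun} to descend the vanishing from \( X \) to \( \L^\times \). One minor notational issue: you use \( \pi \) both for the projection \( \L^\times \to X \) and for the uniformizer, which makes the displayed equation \( \frac{d}{d\log(\pi)}(\pi^\ast F) = \pi^\ast(\frac{d}{d\log(\pi)}F) \) ambiguous; rename the projection.
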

\begin{proof}
  This is clear from Corollary~\ref{hol-indep}.
\end{proof}

The following is obvious.
\begin{proposition}
  Let \( \L \) be a line bundle on \( X \) and let \( \alpha \) be a
  curvature form cupping to \( ch_1(\L) \).
  If $g:Y\to X$ is a morphism, then $g^{\ast}\val_\alpha  =
  \val_{g^{\ast}\alpha}$
\end{proposition}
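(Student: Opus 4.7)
The plan is to reduce the equality of valuations to the functoriality~\eqref{eq:fun} of Vologodsky functions under pullback. The key point is that the construction producing admissible log functions from curvatures (Theorem~\ref{log-curv}, or more concretely Theorem~\ref{logdima} in the curve case) is itself natural, so that pulling back a log function along $g$ pulls back the curvature accordingly.

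First I would pick an admissible log function $\log_{\L}$ on $\L$ with curvature $\alpha$, whose existence is guaranteed by Theorem~\ref{log-curv}. Viewing $\log_{\L}$ as a Vologodsky function on the total space $\L^{\times}$ and considering the pullback line bundle $g^{\ast}\L$, the pullback $g^{\ast}\log_{\L}$ is an admissible log function on $g^{\ast}\L$; I would then verify that its curvature is $g^{\ast}\alpha$. In the curve case this is immediate from the explicit formula~\eqref{E:logfromitint}, since the right-hand side pulls back termwise to the corresponding expression for $g^{\ast}\alpha$; in general, one appeals to the functoriality of the curvature construction of~\cite{Bes00}.

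Next, by the preceding proposition $\val_\alpha$ equals $d/d\log(\pi)\log_{\L}$ for \emph{any} admissible log function with curvature $\alpha$. Applying the functoriality~\eqref{eq:fun} of the derivative with respect to the branch parameter under pullback gives
\[
g^{\ast}\val_\alpha \;=\; g^{\ast}\!\left(\tfrac{d}{d\log(\pi)}\log_{\L}\right) \;=\; \tfrac{d}{d\log(\pi)}\bigl(g^{\ast}\log_{\L}\bigr),
\]
and since $g^{\ast}\log_{\L}$ has curvature $g^{\ast}\alpha$, the right-hand side is by definition $\val_{g^{\ast}\alpha}$.

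The main obstacle is the naturality of the assignment (log function) $\mapsto$ (curvature) under morphisms $g\colon Y\to X$; once that is in hand, the functoriality of the derivative with respect to $\log(\pi)$ does all the remaining work. Note that the well-definedness of $\val_{g^{\ast}\alpha}$, guaranteed via Corollary~\ref{hol-indep}, means that any admissible log function on $g^{\ast}\L$ with curvature $g^{\ast}\alpha$ may be used on the right-hand side, so the pulled-back log function $g^{\ast}\log_{\L}$ is a legitimate choice and no further compatibility needs to be checked.
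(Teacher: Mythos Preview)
Your argument is correct and is precisely the natural unpacking of why the statement holds. Note, however, that the paper offers no proof at all: it simply declares ``The following is obvious'' before stating the proposition. Your write-up is therefore more detailed than the paper's treatment, but the content is exactly what the author has in mind---functoriality of the curvature under pullback together with the functoriality~\eqref{eq:fun} of $d/d\log(\pi)$ yield the result immediately.
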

\begin{corollary}
  In the dynamical situation, suppose \( \alpha \) is a curvature form
  cupping to \( ch_1(\L) \) and satisfying \( f^\ast \alpha = d \alpha \).
  Then, \( \beta \) is an isometry (up to a constant) for the valuation \(
  \val_\alpha\).
\end{corollary}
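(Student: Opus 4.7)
The plan is to realize $\val_\alpha$ via an honest admissible log function, transport the structure across $\beta$ to a comparison on $\L^{\otimes d}$, and then differentiate with respect to $\log(\pi)$.

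First, I would use Theorem~\ref{log-curv} to pick an admissible log function $\log_\L$ on $\L$ whose curvature is $\alpha$. Tensor power and $f$-pullback then produce two natural log functions on $\L^{\otimes d}$: namely $d\cdot \log_\L$ and $\beta^\ast(f^\ast \log_\L)$. Their curvatures are $d\alpha$ and $f^\ast \alpha$ respectively, and these agree by hypothesis. By the uniqueness clause of Theorem~\ref{log-curv}, the two log functions therefore differ by the integral of some holomorphic form $\omega \in \Omega^1(X)$, plus a constant in $K[\log(\pi)]$.

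Second, I would apply $d/d\log(\pi)$ to this identity. The functoriality~\eqref{eq:fun} of $d/d\log(\pi)$ under $f$ and under the $K$-linear isomorphism $\beta$ transforms the derivative of $\beta^\ast(f^\ast \log_\L)$ into $\beta^\ast f^\ast \val_\alpha$, and the derivative of $d \cdot \log_\L$ into $d\cdot \val_\alpha$. Corollary~\ref{hol-indep} forces the derivative of $\int \omega$ to vanish on the proper variety $X$, while the derivative of the residual constant in $K[\log(\pi)]$ is an element of $K$. This yields the identity $d\cdot \val_\alpha = \beta^\ast f^\ast \val_\alpha$ up to an additive constant in $K$, which is precisely the statement that $\beta$ is an isometry for $\val_\alpha$ up to a constant.

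No substantial obstacle presents itself: the argument is a clean assembly of Theorem~\ref{log-curv}, Corollary~\ref{hol-indep}, and the functoriality~\eqref{eq:fun}. The only bookkeeping I expect to want to spell out is the verification that the curvature construction is natural under tensor powers and pullbacks, so that $d\cdot \log_\L$ and $\beta^\ast(f^\ast \log_\L)$ genuinely have curvatures $d\alpha$ and $f^\ast \alpha$; this is implicit in the curvature formalism of~\cite{Bes00}, but it is the one step a reader would benefit from seeing made explicit.
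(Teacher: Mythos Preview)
Your proposal is correct and follows essentially the same route as the paper. The paper's proof is a one-liner—``the two valuations one obtains on $\L^{\otimes d}$ have the same curvature''—which simply invokes the two immediately preceding Propositions (that $\val_\alpha$ depends only on $\alpha$, and that $g^\ast \val_\alpha = \val_{g^\ast \alpha}$); your argument unpacks those Propositions inline by going back to Theorem~\ref{log-curv}, Corollary~\ref{hol-indep}, and~\eqref{eq:fun}, which is exactly how those Propositions are proved.
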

\begin{proof}
The two valuations one obtains on \( \L^{\otimes d}  \) have the same
  curvature.
\end{proof}

Thus, in dynamical situation, there are (at least) two ways to construct a
compatible valuation locally at a place above the prime \( q \). 
One method is via limiting procedures using norms over \( \mathbb{R} \), hoping that the
resulting norm has values in some \( c^{\mathbb{Q}}  \). The other is via \( q \)-adic analysis and
curvature forms. The former method has several advantages
\begin{itemize}
\item It does not need to assume the existence of a well-behaved curvature
form
\item The latter method may not produce \( \mathbb{Q} \)-valued valuations.
The former method may produce norms which are not coming from valuations,
  but these may still be used in the theory of heights while non- \(
  \mathbb{Q} \)-valued valuations are useless there.
\end{itemize}

We note though that for any line bundle \( \L \) on an abelian
variety \( X \) the Vologodsky valuation associated with any curvature form
for \( \L \) is \( \mathbb{Q} \)-valued. In fact, it does not depend on the
curvature but only on the line bundle itself and is the canonical valuation
so may be used for the computation of the canonical height. This is proved
in~\cite[Theorem~9.24]{BMS21}.

The method of Vologodsky valuations has one very important advantage. If a
valuation is constructed in a dynamical situation for the line bundle \( \L
\) over \( X \) using the limiting procedure, then there would be some way to control, compute or
approximate it over \( X \). However, if we pull back via \( g:Y\to X \),
there will be no obvious way to compute the pulled back valuation over \( Y
\). On the other hand, using Vologodsky valuations, the pulled back
valuation is associated with the pulled back curvature, and is therefore,
at least in principle, computable. This is especially true if \( Y \) is a
curve, as Theorem~\ref{logdima} provides an explicit description of the log
function in terms of the curvature, and Theorem~\ref{deriter} provides a way of computing the
associated valuation. In~\cite{BMS23} we use this to explicitly compute the
pullback to a curve of the canonical valuation on a symmetric line bundle
on the Jacobian of the curve, with applications to Quadratic Chabauty.

\section{The unipotent Albanese map}\label{sec:unipalb}

The classical Albanese map between a variety and its Albanese variety has a
non-abelian analogue, the \emph{Unipotent Albanese map} first appearing in
its Hodge 
incarnation in the work of Hain~\cite{Hain87}
and later becoming a cornerstone of Kim's non-abelian Chabauty's
method~\cite{Kim05,Kim09}.

Our Goal is to discuss the case of curves with semi-stable reduction, but
doing so in a rather informal manner. In particular, we do not claim any
relation with the \'etale theory.

Let us discuss things first is some generality. Suppose \( X \) is a
variety over a field \( K \), which for the moment can be any field.
 We assume that we can define for \( x_0 \in
X(K) \) some sort of fundamental group \( G=\pi_1(X,x_0) \), possibly with
some extra structure, and, given
another rational point \( x\in X(K) \), a fundamental path space \(
\pi_1(X,x_0,x) \), again with some extra structure, which is a right torsor
for \( G \) with compatible structures (using the theory of
Tannakian categories one can often streamline the meaning of these
compatible structures).
\begin{definition}
  
Let \( \alb(G) \) be the set of isomorphism classes
  of \( G \)-torsors with compatible structure and define the
  \emph{abstract Albanese map} as the map
  \[
  \alpha_X: X(K) \to \alb(\pi_1(X,x_0))
\] 
sending \( x\in X(K) \) to the isomorphism class of
the torsor \( \pi_1(X,x_0,x) \).
\end{definition}

Perhaps the most obvious example of an abstract Albanese map is the one for
which \( \pi_1 \) is the \'etale fundamental group (respectively \'etale
path space) of \( \overline{X}
:= X \otimes_K \overline{K} \), which is a group (respectively a torsor) with an action of \(
\Gal(\overline{K} / K) \). In this case we clearly have \[
  \alb(\pi_1(X,x_0))=H^1(\Gal(\overline{K} / K),\pi_1(\overline{X},x_0))
\;,\]  
the non-abelian cohomology of \( \Gal(\overline{K} /K) \) with coefficients
in \( \pi_1(\overline{X}, x_0) \).

In the second example \( K \) is a finite extenion of \( \mathbb{Q}_p  \),
\( X \) is a proper variety over \( K \) with good reduction 
and the fundamental group is the unipotent de Rham one, the pro-algebraic
group associated by Tannakian duality to the category of unipotent
integrable connections on \( X \) with the fiber functor of taking the
fiber at \( x_0 \). This has a filtration and an action of a Frobenius
automorphism (we take the linear Frobenius and not the semi-linear one for
convenience), and path spaces have the same structure. In~\cite{Bes10a} we
proved the following result.
\begin{theorem}\label{Heidelberg}
  We have a canonical isomorphism
  \begin{equation}\label{eq:albdesc}
    \alb(\pi_{1}^{dR}(X,x_0) ) \isom  F^0\backslash \pi_1^{dR} (X,x_0)(K) 
  \end{equation} 
  Furthermore, there exists a correspondence between regular functions on
  \(  \pi_1^{dR} (X,x_0) \), 
  left
  invariant under \( F^0 \), and Coleman functions on \( X \) such that if the function \( f \)
  corresponds to the Coleman function \( F \) on \( X \), then \[
    f(\alpha_x(x))=F(x),\quad x\in X(K)\;.
  \] 
\end{theorem}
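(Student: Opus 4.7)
The plan is to unpack both statements using the Tannakian description of the de~Rham fundamental group and of Coleman functions from~\cite{Bes99}. Write $G=\pi_1^{dR}(X,x_0)$, a pro-unipotent group equipped with a (linear) Frobenius $\varphi$ and a decreasing filtration $F^{\bullet}$ on its coordinate ring (equivalently, on the associated Lie algebra). A torsor $P$ representing a class in $\alb(G)$ carries the analogous structures: a Frobenius $\varphi_P$ covering that on $G$ and a filtration compatible with $F^\bullet$.

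First I would build the map $\alb(G)\to F^0\backslash G(K)$. Every such $P$ admits a unique Frobenius-invariant point $p_{\varphi}\in P(K)$ (this is the canonical path result, analogous to~\cite[Corollary~3.2]{Bes99} applied to the torsor $P$ itself; unipotence of $G$ and the weight structure coming from $\varphi$ force uniqueness). On the other hand, $F^0P$ is a torsor under $F^0G$, and standard unipotent descent supplies a point $p_F\in F^0P(K)$. Writing $p_F=g\cdot p_{\varphi}$ defines $g\in G(K)$; a different choice of $p_F$ replaces $g$ by an $F^0G(K)$-translate, so the class $\bar g\in F^0\backslash G(K)$ is well defined. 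Functoriality in $P$ is immediate. For the inverse, given $g\in G(K)$ I would take $P_g$ to be $G$ with its standard Frobenius and with filtration $F^iP_g:=F^iG\cdot g^{-1}$; one checks directly that $P_g\cong P_{g'}$ as torsors with structure iff $\bar g=\bar g'$, and that every $P$ is isomorphic to some $P_g$ by choosing $p_\varphi$ and $p_F$ as above. This gives the canonical isomorphism~\eqref{eq:albdesc}.

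Next, for the functional correspondence I would use the fact that the coordinate ring $\mathcal{O}(G)$ is the direct limit of matrix coefficients of unipotent representations of $G$, which by Tannakian duality are precisely the fibers at $x_0$ of unipotent connections $(M,\nabla)$ on $X$. A matrix coefficient is specified by a pair $(v,s)$ with $v\in M_{x_0}$ and $s\colon M_{x_0}\to K$; such data give the function $g\mapsto s(g\cdot v)$ on $G$. The condition that this function be left-invariant under $F^0G$ translates, via the correspondence between $F^\bullet$ on $G$ and the Hodge filtration on the connection, into the statement that $s$ factors through $M/F^0M$, equivalently extends to a horizontal section $s\colon M\to \mathcal{O}_X$ (using that $F^0$ in the de~Rham setting is the piece generating the trivial quotient of the connection). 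Thus $(M,\nabla,v,s)$ is exactly the data of a Coleman fourtuple in the sense reviewed in Section~\ref{sec:colvol}, and one gets the Coleman function $F$ on $X$ attached to it.

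Finally, to verify $f(\alpha_X(x))=F(x)$ I would trace through the definitions: the image $\alpha_X(x)\in F^0\backslash G(K)$ is represented by the element $g_x\in G(K)$ that sends the Frobenius-invariant section $p_\varphi$ of $\pi_1^{dR}(X,x_0,x)$ to an $F^0$-section. Under the Tannakian dictionary this $g_x$ acts on $M_{x_0}$ precisely as the canonical parallel transport $\gamma_{x_0,x}\colon M_{x_0}\to M_x$ composed with the filtration trivialization, so $f(\alpha_X(x))=s(g_x\cdot v)=s(\gamma_{x_0,x}v)=F(x)$ by the very definition of the Coleman function attached to $(M,\nabla,v,s)$. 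Surjectivity of the correspondence follows by running the Tannakian description of Coleman functions in reverse. The main obstacle I expect is establishing cleanly that $F^0$ on $\mathcal{O}(G)$ corresponds in the Tannakian dictionary to the Hodge filtration on unipotent connections in a way compatible with the fiber functor at $x_0$; once this matching is in place the rest of the argument is formal.
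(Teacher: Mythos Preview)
Your construction of the isomorphism~\eqref{eq:albdesc} is correct and matches the paper's argument: take the unique Frobenius-invariant path $\gamma_\phi\in P(K)$, pick any $\gamma_{dR}\in F^0 P(K)$, write $\gamma_\phi=\gamma_{dR}\cdot g$, and observe that the ambiguity in $\gamma_{dR}$ is exactly left multiplication by $F^0 G$.

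The genuine gap is in the second half, in your identification of what left $F^0$-invariance of a matrix coefficient $g\mapsto s(gv)$ forces on the functional $s$. You assert that it means $s$ factors through $M_{x_0}/F^0 M_{x_0}$ and ``equivalently extends to a horizontal section $s\colon M\to\mathcal{O}_X$''. This is not the right condition: a horizontal $s$ would be a morphism of \emph{connections}, hence a $G$-map, which is far too restrictive and is not what a Coleman fourtuple requires. As recalled in Section~\ref{sec:colvol}, the $s$ in $(M,\nabla,v,s)$ is only a morphism of the \emph{underlying vector bundle}. The paper's key input at this point is Hadian's description~\cite[Remark~3.8]{Had11}: $F^0 G$ is exactly the image in $G$ of the Tannakian fundamental group $H$ of unipotent \emph{vector bundles} (no connection) under the map induced by forgetting the connection. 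Combined with the elementary Lemma~\ref{mylemma} (a matrix coefficient is left-invariant under the image of $\delta\colon H\to G$ iff it can be represented with $s$ an $H$-morphism), one gets that $F^0$-left-invariance is equivalent to $s$ being the fiber at $x_0$ of a vector-bundle map $M\to\mathcal{O}_X$, which is precisely the Coleman datum.

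This correct identification of $s$ is also what makes the final verification go through cleanly: since $\gamma_{dR}\in F^0 P$ is a path for the vector-bundle fiber functor, it commutes with the vector-bundle morphism $s$ and acts as the identity on $\mathcal{O}_X$, yielding $s(\gamma_\phi v)=s(\gamma_{dR}(gv))=\gamma_{dR}(s(gv))=s(gv)=f(\alpha_X(x))$. Your phrase ``composed with the filtration trivialization'' is gesturing toward this commutation, but without the correct Tannakian description of $F^0 G$ the argument does not close. You flagged exactly this matching as the main obstacle; the missing ingredient is Hadian's result, and the answer it gives is not the Hodge-filtration condition you guessed.
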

For the proof we need to recall that function on an affine group scheme \(
G\) over a field \( K \) can be given by ``matrix coefficients''. These are
defined by triple \( (V,v,s) \), where \( V \) is a finite dimensional
representation of \( G \), \( v\in V \) and \( s\in V^\ast \). Such a
triple defines a function on \( G \) by the formula \[
  (V,v,s)(g)= s(gv)\;.
\]
There are obvious relations between such triple that produce the same
function. In particular, if \( T:V\to V' \) is a \( G \)-map, then the
triple \( (V,v,T^\ast s') \) and $(V', Tv, s')$ produce the same
function. If \( G \) is the fundamental group of a neutral Tannakian
category \( \mathbb{T} \), with fiber functor \( \omega \), then these
triple are given in terms of \( \mathbb{T}, \omega \) as \( (V,v,s) \)
where \( V\in \mathbb{T} \), \( v\in \omega(V) \) and \( s\in
\omega(V)^\ast \). This allows a direct construction of the underlying Hopf
algebra. In the standard definition one uses in the construction \(
\omega(V) \otimes \omega(V)^\ast \) but it is easy to see that simple tensors
suffice.
\begin{lemma}\label{mylemma}
  Let \( \delta: H \to G \) be a homomorphism of affine group schemes over
  \( K \). A function \( f:G \to K \) is \( \delta(H) \) left invariant if
  and only if it can be represented by a triple \( (V,v,s) \) where \( s
  \) is an \( H \)-homomorphism \( V\to K \), with \( K \) having the
  trivial \( H \)-action.
\end{lemma}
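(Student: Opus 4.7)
The plan is to prove both directions directly, with most of the content in the nontrivial ``only if'' direction. The forward implication is essentially a definition chase, and the reverse implication is handled by replacing $V$ with the smallest $G$-subrepresentation containing $v$, which is automatically $H$-stable via $\delta$.

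For the ``if'' direction, I would assume $s: V \to K$ is an $H$-homomorphism where $K$ carries the trivial $H$-action and $V$ is viewed as an $H$-representation via $\delta$. This is just the statement $s(\delta(h) w) = s(w)$ for all $h \in H$, $w \in V$. Then for any $g \in G$,
\[
  f(\delta(h) g) = s(\delta(h) g v) = s(\delta(h) \cdot (gv)) = s(gv) = f(g),
\]
so $f$ is $\delta(H)$ left invariant.

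For the ``only if'' direction, suppose $f$ is $\delta(H)$ left invariant and represented by some triple $(V,v,s)$. Let $W \subset V$ be the linear span of the $G$-orbit of $v$, i.e.\ the smallest $G$-subrepresentation containing $v$; it is finite dimensional because $V$ is. By construction $W$ is $G$-stable, hence also $H$-stable via $\delta$. Setting $s' := s|_W$, the triple $(W,v,s')$ represents the same function since $gv \in W$ for all $g \in G$. Finally, because every element of $W$ is a linear combination of vectors of the form $gv$, the hypothesis $s(\delta(h) g v) = f(\delta(h)g) = f(g) = s(gv)$ extends linearly to $s'(\delta(h) w) = s'(w)$ for all $w \in W$, which is precisely the condition that $s'$ be an $H$-homomorphism into $K$ with trivial $H$-action.

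I do not foresee a serious obstacle: the only point one should verify carefully is that the ``orbit-span'' construction produces a genuine finite dimensional $G$-subrepresentation on which the given functional automatically becomes $H$-invariant, which follows immediately from $\delta(H)$-invariance of $f$ evaluated on translates of $v$. No choice of splitting or averaging is required, so reductivity assumptions on $H$ are unnecessary.
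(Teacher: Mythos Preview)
Your proof is correct and follows essentially the same approach as the paper: the paper also replaces $V$ by the $G$-submodule generated by $v$ and then observes that the left $\delta(H)$-invariance equation $s(\delta(h)gv)=s(gv)$ for all $g,h$ forces $s(\delta(h)w)=s(w)$ on all of $V$. Your write-up simply spells out the linearity step in slightly more detail.
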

\begin{proof}
Clearly if such a representation exists, the function is left invariant. Conversely,
  suppose \( f \) is \( H \) invariant from the left and is represented by a
  triple \( (V,v,s) \). By replacing \( V \) with a sub \( G \)-module we
  may assume that \( v \) generates \( V \) as a \( G \)-module. But then
  the equation $s(\delta(h) g v) = s(gv)$ for every \( g\in G \) and \(
  h\in H \) implies that \( s (\delta(h)w) = s(w) \) for every \( w\in V
  \).
\end{proof}
\begin{proof}[Proof of Theorem~\ref{Heidelberg}]
  By~\cite[Corollary~3.2]{Bes99} each torsor \( P \) for \( G=\pi_1^{dR}(X,x_0)  \) has a canonical Frobenius invariant element \(
  \gamma_\phi \) (In Section~\ref{sec:colvol} we called it \( \gamma_{x,y} \), a path connecting
  the two points, but the proof of the theorem, which is an immediate
  consequence of~\cite[Theorem~3.1]{Bes99} clearly applies to any path space for
  \( G \)). Pick any \( \gamma_{dR} \in F^0 P \). Then \(
  \gamma_{\phi}= \gamma_{dR}g \) for a unique \( g\in G \). Any other such
  choices of \( \gamma_{dR} \) will be of the form \( \gamma_{dR} h \) for
  some \( h\in F^0 G\), replacing \( g \) by \( hg \). This establishes the
  bijection~\eqref{eq:albdesc}. Suppose now that
  \( f:  G \to K \) is a
  regular function that is invariant from the left under \( F^0  \).
  By~\cite[Remark~3.8]{Had11} the subgroup \( F^0 G \) is
  the image of \( H \), the fundamental group of the neutral Tannakian
  category of unipotent vector bundles, under the
  map induced by the functor that forgets the connection on bundles with
  connections.
 Let \( ((M,\nabla ),v,s) \) be a
  representation for \( f \) whose existence is guaranteed by
  Lemma~\ref{mylemma}, so \( s \) is an \( H \)-homomorphism. This
  immediately translates to it being the fiber at \( x_{0} \) of a map of
  vector bundles, which we continue to call \( s \). With the new
  interpretation of \( s \), the triple \( ((M,\nabla ),v,s) \) is now a
  Coleman function \( F \). Its value at \( x \) is by definition \[
    F(x) = s(\gamma_\phi v)\;.
  \] 
  The value of \( f \) on \( \alpha(x)= g  \) is \( s(gx) \). The result
  now follows because \[
    s (\gamma_\phi v) = s (\gamma_{dR}(gv)) = \gamma_{dR} (s (g v) = s (g
    v)\;.
  \]
  The last two equalities follow because \( \gamma_{dR} \), being in \( F^0
  \), commutes with arbitrary morphisms of vector bundles, in particular \(
  s\), and on the trivial line bundle it is just the identity.
\end{proof}
Next we consider a variety \( X \) with semi-stable reduction over \( K \). 
We then \emph{should} have a similar picture with a fundamental group
$G=\pi_1(X,x_0)$ in the category of filtered
$(\varphi,N)$-modules, and a corresponding
Albanese map. To be more precise, There is a triple \( G=(G_{cr},G_{dR},I_\pi) \)
consisting of an affine group scheme \( G_{cr} \) over \( K_0
\) with Frobenius and monodromy operators, an affine group scheme
\( G_{dR} \) with a filtration, and a comparison isomorphism
\begin{equation*}
I_{\textup{HK},\pi}: G_{cr}\otimes_{K_0} K \to G_{dR}
\end{equation*}
depending on the choice of a uniformizer \( \pi \). The dependency on \(
\pi \) is such that if we change to \( \pi' \) we have, exactly as
in~\eqref{chofuni},
\begin{equation}\label{gpdep}
  I_{\pi'}= I_\pi \circ \exp (\log (\pi' / \pi )N) \;.
\end{equation}
Note that \( N \) (and any multiple of it), being a topologically nilpotent derivation of a Hopf
algebra, can be
exponentiated to an automorphism of the algebra, hence of the associated
affine group scheme.
\begin{remark}
  We intentionally wrote \emph{should have}. There have been many
  approaches to the unipotent fundamental group in the log scheme or
  semi-stable case (a probably not inclusive list
  includes~\cite{KimHain04,KimHain05,Kat-Lit21,Vol01,Deg-Niz18,Shi00}
). Not all of them state all required results. For
  example, to the best of our knowledge, the dependency on the uniformizer
  of the Hyodo-Kato isomorphism is only hinted to in the introduction
  to~\cite{KimHain05}. Also, while it is often claimed that all of these
  approaches are the same, this seems to never be actually proved,
  excluding some results in~\cite{CDS23}. In
  particular, to actually make all of this into actual theorems, the theory
  should be compared with the theory of Vologodsky.
\end{remark}
The Albanese set \( \alb(G) \) is the set of isomorphism classes of \( G \)
torsors with compatible structures \( P=(P_{cr},P_{dR}, I_\pi) \) in the
obvious sense. 
\begin{proposition}\label{prodstruc}
  There exists a bijection
  \[
    P\mapsto (P_{c,\pi},P_d): \alb(G) \to  (F^0\backslash G_{dR}) \times \nill(G_{cr})
  \]
where \( \nill(G_{cr})
\) classifies compatible \( N \) on the trivial torsor \( G_{cr} \)
  with its Frobenius.
\end{proposition}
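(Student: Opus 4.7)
The strategy extends the proof of Theorem~\ref{Heidelberg} by also keeping track of the monodromy data. A canonical Frobenius-invariant element $\gamma_\phi$ in the crystalline torsor trivialises its Frobenius structure, and, exactly as in Theorem~\ref{Heidelberg}, comparing $\gamma_\phi$ with any lift $\gamma_{dR}\in F^0 P_{dR}$ via $I_\pi^P$ produces the continuous component $P_{c,\pi}$. What is new in the semi-stable setting is that the monodromy $N_P$ does not disappear upon trivialisation: it becomes a residual derivation on the trivial torsor, which is exactly the discrete component $P_d$.

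Concretely, given a torsor $P=(P_{cr},P_{dR},I_\pi^P)$, the algebraic analogue of~\cite[Corollary~3.2]{Bes99} in the filtered $(\varphi,N)$-module setting (that is,~\cite[Theorem~B]{Vol01} together with the extra monodromy characterisation mentioned in Section~\ref{sec:colvol}) produces a canonical Frobenius-invariant element $\gamma_\phi\in P_{cr}$. The resulting map $\iota_\gamma:G_{cr}\to P_{cr}$, $g\mapsto \gamma_\phi\cdot g$, is an isomorphism of Frobenius $G_{cr}$-torsors. Pulling back $N_P$ along $\iota_\gamma$ yields a derivation of $\mathcal{O}(G_{cr})$ compatible with the coaction and with $\varphi_G$, and we declare this to be $P_d\in \nill(G_{cr})$. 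For the continuous component, take any $\gamma_{dR}\in F^0 P_{dR}$ and write $\gamma_{dR}=I_\pi^P(\gamma_\phi)\cdot g_\pi$ for a unique $g_\pi\in G_{dR}(K)$; replacing $\gamma_{dR}$ by another $F^0$ lift multiplies $g_\pi$ on the left by an element of $F^0 G_{dR}$, so $P_{c,\pi}:=[g_\pi]\in F^0\backslash G_{dR}$ is well defined.

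Injectivity and surjectivity follow by constructing an explicit inverse: given $(\bar{g},N^{\textup{triv}})\in (F^0\backslash G_{dR})\times \nill(G_{cr})$, equip $G_{cr}$ with $\varphi_G$ and $N^{\textup{triv}}$ to form a crystalline torsor, equip $G_{dR}$ with its standard filtration, and take as comparison isomorphism the composition of $I_\pi$ with right multiplication by any lift $g\in G_{dR}(K)$ of $\bar{g}$; two lifts differ by an element of $F^0 G_{dR}$, yielding isomorphic torsors. A routine check shows the two constructions are mutually inverse. The main obstacle will be establishing the existence and uniqueness of the canonical path $\gamma_\phi$ for arbitrary torsors in the semi-stable setting, which is precisely the kind of foundational result flagged as underdeveloped in the remark preceding this proposition and requires a proper extension of Vologodsky's canonical-path theorem from path spaces of points to general torsors over the log-algebraic unipotent fundamental group.
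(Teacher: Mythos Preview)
Your proposal is correct and follows essentially the same approach as the paper: use Vologodsky's canonical Frobenius-invariant path $\gamma_\phi$ to trivialise $(P_{cr},\varphi)$, read off the residual monodromy as $P_d\in\nill(G_{cr})$, and compare $I_\pi(\gamma_\phi)$ with an $F^0$-lift in $P_{dR}$ to obtain $P_{c,\pi}\in F^0\backslash G_{dR}$. Your write-up is more detailed than the paper's (explicit inverse, well-definedness checks), and the foundational concern you flag at the end is precisely the issue the paper itself acknowledges in the Remark preceding the proposition.
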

\begin{proof}
Let \( P \) be a \( G \) torsor. By~\cite{Vol01} the torsor \( P_{cr} \)
  has a canonical path \( \gamma_\phi \). This path is fixed by Frobenius. As a torsor with a
  Frobenius action, \( P_{cr} \) is therefore trivial and \( P_{dR} \) is
  trivialized by \( I_\pi(\gamma_\phi) \). The set \( \alb(G) \) is
  thus the product of the possible Filtrations on \( G_{dR} \) as a \(
  G_{dR} \)-torsor, with \( \nill(G_{cr}) \). The former is clearly
  classified by \(F^0 \backslash G_{dR}\). 
\end{proof}
Note that since the Vologodsky path is not killed by monodromy, there is
still some wiggling room for the monodromy, and \( \nill(G_{cr}) \) is not trivial.
Also note that the map to \(F^0 \backslash G_{dR}  \) depends on the choice of the
uniformizer \( \pi \), since the Vologodsky path needs to be imported via
\( I_\pi \) to \( G_{dR} \) in order to define it.

The map in Proposition~\ref{prodstruc} is explicitly given as follows
\begin{equation}\label{prodexpl}
    P_{c,\pi} = \gamma_{dR}^{-1} I_\pi(\gamma_\phi)
  \;,\quad P_d = \gamma_\phi^{-1} N(\gamma_\phi)\;.
\end{equation}
The element \( N(\gamma_\phi) \) belongs to \( \operatorname{Lie}(G_{cr})
\) and characterises \( P_d \). Vologodsky theory puts some restrictions on
this element, which we will not attempt to describe here.
\begin{remark}
  The product decomposition of the Albanese was already observed by Betts.
\end{remark}
\begin{definition}
  The compositions of the Albanese map \( X(K)\to \alb(G) \)
  with the projections on the factors \( F^0 \backslash G_{dR}  \) and \( \nill(G_{cr})  \) will
  be called the continuous and discrete components of the semi-stable
  Albanese map respectively, denoted \( \alpha_{c,\pi} \) and \( \alpha_d \)
  respectively.
\end{definition}
\begin{theorem}
For each choice of uniformizer \( \pi \), the continuous component of the
  semi-stable Albanese map is computed using Vologodsky integrals:   
  There exists a correspondence between regular functions on
  \(  G_{dR}\), 
  left
  invariant under \( F^0 \),  and Vologodsky functions with respect to the branch of
  the logarithm determined by \( \log (\pi)=0 \), such that if the regular function \( f: G_{dR}\to K \)
  corresponds to the Vologodsky function \( F \) on \( X \), then \[
    f(\alpha_{c,\pi}(x))=F(x),\quad x\in X(K)\;.
  \] 
  The discrete component of the semi-stable Albanese is the logarithmic derivative, with
  respect to \( \log (\pi) \) of the continuous component: \[
    P_{c,\pi}^{-1} \frac{d}{d \log (\pi)} P_{c,\pi} = I_\pi(P_d)\;.
  \]  
\end{theorem}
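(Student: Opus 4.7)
The plan is to adapt the proof of Theorem~\ref{Heidelberg} to the semi-stable setting, using the explicit formula $P_{c,\pi} = \gamma_{dR}^{-1} I_\pi(\gamma_\phi)$ from~\eqref{prodexpl} in place of the direct torsor decomposition employed there. The two main ingredients are the canonical Vologodsky path $\gamma_\phi \in P_{cr}$ and a choice of lift $\gamma_{dR} \in F^0 P_{dR}$.

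For the first (continuous) assertion, I would first apply Lemma~\ref{mylemma} to the inclusion $F^0 G_{dR} \hookrightarrow G_{dR}$: an $F^0$-invariant regular function $f$ on $G_{dR}$ is represented by a triple $((M,\nabla),v,s)$ with $s$ an $F^0$-equivariant map. Since $F^0 G_{dR}$ is the image of the fundamental group of the category of unipotent vector bundles without connection, this $F^0$-equivariance amounts to $s$ being the fiber at $x_0$ of a morphism of underlying vector bundles, exactly as in the proof of Theorem~\ref{Heidelberg}. By the Tannakian description of Section~\ref{sec:colvol}, the triple then defines a Vologodsky function $F$, whose value at $x$ is $s(\gamma_{x_0,x}v)$; under the fixed branch $\log(\pi)=0$, the canonical path $\gamma_{x_0,x}$ on the de Rham side corresponds to $I_\pi(\gamma_\phi)$. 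Evaluating $f$ at $\alpha_{c,\pi}(x) = \gamma_{dR}^{-1} I_\pi(\gamma_\phi)$ gives $s(\gamma_{dR}^{-1} I_\pi(\gamma_\phi) v) = s(I_\pi(\gamma_\phi) v) = F(x)$, where the middle equality uses the $F^0$-invariance of $s$ together with the triviality of the $F^0$-action on $K$, just as at the end of the proof of Theorem~\ref{Heidelberg}.

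For the derivative statement, I would observe that both $\gamma_\phi$ (living in the $\pi$-independent $G_{cr}$) and the lift $\gamma_{dR}$ (depending only on the filtration on $G_{dR}$) can be chosen independently of $\pi$, so the entire $\pi$-dependence of $P_{c,\pi}$ is carried by $I_\pi$. Differentiating~\eqref{gpdep} at $\pi' = \pi$ yields $\frac{d}{d\log(\pi)} I_\pi = I_\pi \circ N$, and hence
\[
\frac{d}{d\log(\pi)} P_{c,\pi} = \gamma_{dR}^{-1} I_\pi(N(\gamma_\phi)),
\]
so that
\[
P_{c,\pi}^{-1} \frac{d}{d\log(\pi)} P_{c,\pi} = I_\pi(\gamma_\phi)^{-1} I_\pi(N(\gamma_\phi)) = I_\pi\bigl(\gamma_\phi^{-1} N(\gamma_\phi)\bigr) = I_\pi(P_d),
\]
using that $I_\pi$ is compatible with the group structure and its extension to the Lie algebra.

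The hard part will be justifying the Tannakian dictionary rigorously, namely that Vologodsky functions match exactly the $F^0$-invariant regular functions on $G_{dR}$ with the same underlying vector bundle data, and that the Vologodsky canonical path on the de Rham side indeed equals $I_\pi(\gamma_\phi)$. A secondary technical issue, which the excerpt itself flags in its remark, is that the various constructions of the semi-stable unipotent fundamental group in the literature need to be shown compatible enough for these identifications to be literally valid, in particular that the $\pi$-dependence of the Hyodo--Kato isomorphism really obeys~\eqref{gpdep}.
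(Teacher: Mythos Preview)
Your proposal is correct and follows essentially the same approach as the paper: the paper's proof simply says the continuous statement is ``word for word the proof in the good reduction case'' (i.e., Theorem~\ref{Heidelberg}, which you have faithfully unwound via Lemma~\ref{mylemma} and the Tannakian description of Vologodsky functions) and that the discrete statement follows by differentiating~\eqref{prodexpl} using~\eqref{gpdep}, which is exactly your computation. Your final paragraph also correctly anticipates the caveat the paper itself records in the remark preceding the theorem.
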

\begin{proof}
The statement about the continuous component is word for word the proof in
the good reduction case. The second statement is obtained by
  deriving~\eqref{prodexpl}
  with respect to \( \log (\pi) \) using~\eqref{gpdep}.
  result
\end{proof}
There similarity between the construction in this section and the
discussion of syntomic regulators in Section~\ref{sec:syntomic}, and in particuar
Theorem~\ref{synderi}, should be obvious. We have no attempted though to
spell out what this result means in specific situations.

\section{Toric regulators}\label{sec:toric}
In this section we recall the main results of our paper~\cite{Bes-Ras17} with
Wayne Raskind and relate it to the main theme of this paper.

In~\cite{Bes-Ras17} we defined (conditionally since some reasonable conjectures
are assumed) a new type of regulator maps.
It applied to varieties $X$ over a \( p \)-adic field \( K \) with so
called ``totally degenerate'' reduction. This means that \( X \) is smooth
projective, has strictly semi-stable reduction, and the components of its
reduction have all of their cohomologies generated by cycles
(see~\cite[Section~2]{Ras-Xar07} for a precise definition).
The target of the toric regulator is a ``toric intermediated Jacobian'' which
is a $p$-adic torus $\coker: T^0 \to T^{-1} \otimes
K^\times$, where $T^\ast$ are finitely generated $\Z$-modules whose
definition we recall shortly. Note that \( T^0 \) and \( T^{-1}  \) may not
be of the same rank.

Some simple examples of toric regulators include:
\begin{itemize}
    \item 
The identity map $H_{\mathcal{M}}^1(K,1) = K^\times \to K^\times$
    \item 
For $E/K$ a Tate elliptic curve $\mathbb{G}_m /q^{\Z}$ the identity map
$$H_{\mathcal{M}}^2(E,1)_0 \isom E(K) \to K^\times / q^{\Z}$$
    \item 
More generally, for a Mumford curve $X$ with Jacobian $J$, the Abel map
$X(K) \to J(K)$, where 
$J(K)$ is viewed as a \( p \)-adic torus via its $p$-adic
  uniformization~\cite{ManDri73}.
    \item 
  The rigid analytic regulator of P\'al~\cite{Pal10}.  For $X$ a Mumford curve with dual graph $\Gamma$, it is a map
$$ \reg_P: K_2(X)\to \hh(\Gamma,K^\times) $$
the right-hand side being harmonic cochains with values in \( K^\times  \).
\end{itemize}

We now recall the construction of the toric regulator.
The special fiber $Y$ decomposes as a union $Y= \cup_{i=1}^n Y_i$.
For a subset
$I\subset \{1,\ldots,n\} $ we let 
$$ Y_I = \bigcap_{i\in I} Y_i\;.$$
Fix integers
$k,r \ge 0$. For each finite prime \( \ell \), let 
\[
M_\ell=H_{\et}^{k}(X\otimes_K \bar{K}, \Z_\ell)\;.
\]
\begin{theorem}[{\cite{Ras-Xar07}}]\label{weight-fil}
Ignoring finite torsion and cotorsion
  there are finitely generated $\Z$-moduels $T^i$, $i\in \Z$ (depending on \( k \) and \( r \)) such that for
each $\ell$ the Galois module $M_\ell(r)$ has an increasing filtration $U^\bullet$ with
$\gr_U M_\ell(r) = \oplus_i T^i \otimes \Z_\ell(-i)$.
\end{theorem}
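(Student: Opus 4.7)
The plan is to invoke the Rapoport--Zink (weight) spectral sequence for the strictly semi-stable model $\mathcal{X}/\O_K$ of $X$, which in our notation reads
\begin{equation*}
E_1^{-s,k+s}=\bigoplus_{i\geq \max(0,-s)}\het^{k-2i-s}\!\bigl(\Ymm{2i+s+1},\Ql(-i-s)\bigr)\;\Longrightarrow\;\het^k(X\otimes\Kbar,\Ql)\;,
\end{equation*}
whose abutment is equipped with the monodromy filtration $U^{\bullet}$, with the monodromy operator $N$ acting as a shift. Twisting by $r$ shifts all Tate twists appropriately, so it suffices to construct the filtration on $M_\ell$ and then twist. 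The candidates for $T^{i}$ will be the graded pieces of the $E_2=E_\infty$ page, suitably reindexed.

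The first main step is to use total degeneracy to see that, up to finite torsion and cotorsion, $\het^{2j}(\Ymm{m},\Zl)$ is identified via the cycle class map with $CH^j(\Ymm{m})\otimes \Zl(-j)$, and that odd-degree cohomology of the strata vanishes. Since each $\Ymm{m}$ is smooth projective over the finite residue field and its cohomology is generated by algebraic cycles, the Chow groups (modulo torsion, or modulo numerical equivalence) are finitely generated $\Z$-modules whose formation is independent of $\ell$. Consequently, each $E_1^{-s,k+s}$ is a direct sum of Tate twists of finitely generated $\Z$-modules tensored with $\Zl$, and the differentials $d_1$ (alternating sums of Gysin pushforwards and restriction maps between strata) are defined integrally and independently of $\ell$. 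Hence the $E_2$-page has the same structure: Tate twists $T^{i,s}\otimes\Zl(-\ast)$ of $\ell$-independent finitely generated $\Z$-modules.

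The second main step is degeneration at $E_2$ and the identification of $U^{\bullet}$ with the weight filtration. In the totally degenerate case each $E_2^{-s,k+s}$ is already pure of a single weight (it is a Tate module), so the monodromy-weight conjecture reduces to a matter of weights on Tate modules and is automatic; the spectral sequence therefore degenerates at $E_2$. The graded pieces $\gr_{U}^{w}M_\ell$ are then direct sums, over the diagonals of fixed weight in the $E_2$-page, of the Tate-twisted Chow-type groups identified above. Defining $T^i$ to be the weight-$i$ contribution (after the twist by $r$) yields the required decomposition
\begin{equation*}
\gr_{U}\bigl(M_\ell(r)\bigr)=\bigoplus_i T^i\otimes \Zl(-i)\;,
\end{equation*}
with $T^i$ a finitely generated $\Z$-module depending only on $k$ and $r$.

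The main obstacle is verifying degeneration at $E_2$ and the identification of the abutment filtration with the monodromy filtration in an $\ell$-independent way; this uses in an essential manner the fact that all $E_2$-pieces are of Tate type, a consequence of the totally degenerate hypothesis. A secondary technical issue, reflected in the caveat ``ignoring finite torsion and cotorsion,'' is that the cycle class map is only an isomorphism rationally (or modulo torsion), so one must work carefully with $\Ql$-coefficients and then pass to an integral structure using a fixed integral lattice of cycles, accepting the loss of control on bounded torsion that is then absorbed in the statement.
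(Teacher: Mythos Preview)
Your outline via the Rapoport--Zink weight spectral sequence is exactly the argument the paper invokes for $\ell\neq p$: the $E_1$-page is built from the \'etale cohomology of the strata $\Ymm{m}$, total degeneracy makes these into Tate twists of $\ell$-independent Chow-type lattices, and degeneration at $E_2$ follows from weight considerations (Deligne, Weil II). So for $\ell\neq p$ your proposal and the paper's proof coincide.

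The genuine gap is the case $\ell=p$. The Rapoport--Zink spectral sequence in the form you wrote is for $\ell$-adic \'etale cohomology with $\ell\neq p$; it does not directly compute $\het^k(X\otimes\Kbar,\Qp)$ from $p$-adic \'etale cohomology of the special fiber strata. To obtain the same filtration and the same graded pieces $T^i\otimes\mathbb{Z}_p(-i)$ when $\ell=p$, one must instead use the $p$-adic weight spectral sequence of Mokrane in Hyodo--Kato (log-crystalline) cohomology, together with Tsuji's proof of the semi-stable comparison theorem to pass back to $p$-adic \'etale cohomology. This is precisely what the paper's proof cites. Without this separate argument your statement ``for each $\ell$'' is not established, and in particular the uniformity of the $T^i$ at the prime $p$ --- which is essential later when one glues the $\ell$-completed toric regulators into a single map --- is left unjustified.

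A minor point: degeneration at $E_2$ is a weight argument (differentials between pure pieces of distinct Frobenius weights vanish), not the monodromy--weight conjecture; the latter concerns identifying the abutment filtration with the monodromy filtration. You conflate these slightly, though in the totally degenerate setting both are indeed straightforward.
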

\begin{proof}
  See~\cite[Corollaries 1 and 6]{Ras-Xar07}. The proof uses the
  Rapoport-Zink weight spectral sequence~\cite{RapZin82} for $\ell\ne p$
and result by
  Mokrane, Hyodo and Tsuji~\cite{Mok93,Hyo88,Tsu99} when $\ell=p$.
\end{proof}

Recall that the residue field of \( K \) is denoted \( \rf \). Put
\begin{itemize}
\item  
$\bar{Y}_I:= Y_I\otimes \bar{\rf}$
\item
$\Ym = \bigcup_{|I| =m} \bar{Y}_I$
\item
$\cijk= \Cijk$, $k\ge \max(0,i)$
\item
$  \cij = \bigoplus_k \cijk $
\end{itemize}
For every index \( s \)
let
$I_s = I-\{i_s\}$
and consider the obvious inclusion
$\rho_s: Y_I \to Y_{I_s} $. Let
\begin{align*}
  \theta_{i,m} &= \sum_{s=1}^{m+1} (-1)^{s-1} \rho_s^\ast: CH^i(\Ym) \to CH^i(\Ymm{m+1})\;,\\
  \delta_{i,m} &= \sum_{s=1}^{m+1} (-1)^{s} \rho_{s\ast}:  CH^i(\Ymm{m+1}) \to CH^{i+1}(\Ym)\;,\\
  d' &= \bigoplus_{k\ge \max(0,i)} \theta_{i+j-k,2k-i+1}\;, \\
  d'' &= \bigoplus_{k\ge \max(0,i)} \theta_{i+j-k,2k-i}\;,
\end{align*}
Finally, let
\[
 d_j^i = d'+d'': \cij \to C_j^{i+1} 
\;,\] 
set
\[
T_j^i := H^i(C_j^\bullet)
\;,\] 
and
renumber for ease of notation:
\[ 
T^i = T_{i+r}^{k-2r-2i}\;.
\] 
The monodromy map:
\begin{equation}\label{monmap}
N :T^i \to T^{i-1}
\end{equation}
given by ``identity on identical
components''.

There is, in general, a non-trivial action of \( \Gal(\overline{K} / K) \)
on the \( T \)'s. However, this will factor through a finite quotient and
so, for simplicity, we will assume that this action is in fact trivial,
which may be achieved after a finite field extension.

The quotient $M'_\ell = U^0 M_\ell(r) / U^{-2} M_\ell(r)$
sits in a short exact sequence
\[
 0\to T^{-1} \otimes \Zl(1) \to M_\ell^\prime \to  T^0 \otimes \Zl \to 0 \;.
\] 
Recall the
Bloch-Kato group of geometric cohomology classes $H_g^1$~\cite[Section~3]{Blo-Kat90} which in
particular satisfies \[
  H_g^1(K,\Zl)\isom 
 K^{\times(\ell)}
\]
where the right-hand side is the \( \ell \)-completion of the
multiplicative group of \( K \). It has a completed valuation map
\[
  \val: 
 K^{\times(\ell)} \to \Zl\;.
\] We have a boundary map
\[
\ntl: T^0 \otimes \Zl \to H_g^1(K,T^{-1}\otimes \Zl(1) )\isom T^{-1}
\otimes K^{\times(\ell)}\;.
\] 

\begin{lemma}[{\cite[Corollary~1.7]{Bes-Ras17}}]
The composition $T^0 \otimes \Zl \xrightarrow{\ntl} T^{-1} \otimes K^{\times(\ell)}
  \xrightarrow{\val} T^{-1} \otimes \Zl$ equals $N\otimes \Zl$, where \( N
  \) is the monodromy map~\eqref{monmap}.
\end{lemma}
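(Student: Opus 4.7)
My plan is to make the cocycle representing $\ntl(x)$ explicit and then compute its image under the valuation. Let $x \in T^0 \otimes \Zl$, which we may take to be $G_K$-invariant after a finite extension. Lift $x$ to $\tilde x \in M_\ell'$; the connecting homomorphism $\ntl(x) \in H^1_g(K, T^{-1} \otimes \Zl(1))$ is then represented by the cocycle $\sigma \mapsto c_x(\sigma) := \sigma \tilde x - \tilde x$. The strategy is to show, for both cases $\ell \ne p$ and $\ell = p$, that $c_x$ reduces to $Nx$ tensored with the Kummer class of a uniformizer, at which point the valuation recovers $Nx$.

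First consider $\ell \ne p$. By Grothendieck's quasi-unipotence (together with the finite base change that trivialises Galois on the $T^i$), inertia acts on $M_\ell$ as $\sigma \mapsto \exp(t_\ell(\sigma) N)$, where $t_\ell \colon I \to \Zl(1)$ is the tame character and $N$ is the monodromy operator. Since $N^2 \tilde x$ lies in $U^{-2}M_\ell(r)$ and hence vanishes in $M_\ell'$, we have $c_x(\sigma) \equiv t_\ell(\sigma) \cdot N\tilde x$, which under the identification $T^{-1} \otimes \Zl(1)$ equals $Nx \otimes t_\ell(\sigma)$; the Frobenius contribution vanishes since Frobenius acts trivially on the $T^i$ after our base change. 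Under the Kummer isomorphism $H^1_g(K, \Zl(1)) \isom K^{\times(\ell)}$, the class of the cocycle $\sigma \mapsto t_\ell(\sigma)$ is that of any uniformizer $\pi$ of $K$. Therefore $\ntl(x)$ corresponds to $Nx \otimes \pi \in T^{-1} \otimes K^{\times(\ell)}$, and applying $\val$ yields $Nx \otimes \val(\pi) = Nx$.

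For $\ell = p$ the direct inertia argument fails; instead one replaces it with the parallel description in $p$-adic Hodge theory. Via Tsuji's semistable comparison, $M_p$ is identified with the $G_K$-representation attached to a filtered $(\varphi, N)$-module whose $N$ agrees, on the graded pieces of the Mokrane-Hyodo-Tsuji filtration, with the $N \colon T^0 \to T^{-1}$ of~\eqref{monmap}. The Bloch-Kato group $H^1_g(K, \Qp(1))$ can be computed via the complex $\csp$ from Section~\ref{sec:syntomic}, under which $\val$ corresponds to projection onto the $\Dst$-component of a Kummer class, and the connecting homomorphism from the sequence defining $M_p'$ is computed by the same linear algebra as before: lift to $\tilde x$, apply $N$, and read off the resulting element of $\gr^{-1}$.

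The main obstacle is precisely this $\ell = p$ reduction: one has to match normalizations between the $\ell$-adic Rapoport-Zink weight filtration and its $p$-adic Mokrane-Hyodo-Tsuji counterpart (including the Tate twist conventions built into the definition of the $T^i$), and verify that the Bloch-Kato description of $\val$ carries the boundary cocycle to the same element of $T^{-1}$ that the $\ell \ne p$ argument produces. Granting these compatibilities, which are the content of the Hyodo-Mokrane-Tsuji-Tsuji package, the identification $\val \circ \ntl = N \otimes \Zl$ becomes the same formal statement in both cases, independent of $\ell$, as one would expect from the uniformity of Theorem~\ref{weight-fil}.
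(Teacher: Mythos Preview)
The paper does not actually prove this lemma; it is stated with a bare citation to~\cite[Corollary~1.7]{Bes-Ras17} and no argument is given. So there is no in-paper proof to compare against, and your write-up is being measured against the cited source rather than anything in the present text.

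On the substance: your treatment of the case $\ell\ne p$ is the standard one and is essentially correct. The key points---that inertia acts on $M_\ell'$ through $1+t_\ell(\sigma)N$ because $N^2$ vanishes on the two-step quotient, that the resulting cocycle is $Nx\otimes t_\ell$, and that under Kummer the valuation is exactly the pairing with the tame character---are all right, and the paper's standing assumption that Galois acts trivially on the $T^i$ (stated just before the definition of $M_\ell'$) justifies ignoring the Frobenius contribution.

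The genuine gap is the case $\ell=p$. You correctly identify that one must pass through $\Dst$ and the filtered $(\varphi,N)$-module, and that the relevant computation is the boundary map in the complex $\csp$; but you do not carry it out, and the phrase ``granting these compatibilities'' is doing all the work. Concretely, what is needed is the explicit identification of the Bloch--Kato exponential for the extension $M_p'$ in terms of the monodromy $N$ on $\Dst(M_p')$, together with the fact that the valuation component of the Kummer isomorphism $H^1_g(K,\Qp(1))\cong K^{\times}\otimes\Qp$ is read off from the $N$-action on $\Dst(\Qp(1))$. This is exactly what the argument in~\cite{Bes-Ras17} supplies, so your outline points in the right direction, but as written it is a reference to the literature rather than a proof.
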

\begin{corollary}{{\cite[Corollary~1.9]{Bes-Ras17}}}
There exists a homomorphism $T^0  \xrightarrow{\nt} T^{-1}\otimes K^{\times } $
  such that $\ntl = \nt
\otimes \Zl$ for each $\ell$.
\end{corollary}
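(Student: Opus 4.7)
The plan is to decompose $K^\times \cong \pi^\Z \oplus \O_K^\times$ and construct $\nt$ component by component. On the valuation side, the preceding lemma identifies the composition of each $\ntl$ with the completed valuation as $N \otimes \Zl$; so the assignment $t \mapsto N(t) \otimes \pi$ defines a map $T^0 \to T^{-1} \otimes \pi^\Z$ whose $\ell$-adic completion reproduces the valuation component of $\ntl$ for every $\ell$. This unambiguously fixes the valuation part of $\nt$ and reduces the problem to constructing the unit part.

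For the unit part, write $\ntl^u \colon T^0 \otimes \Zl \to T^{-1} \otimes \O_K^{\times,(\ell)}$ for the projection of $\ntl$ onto the $\ell$-completion of the units. For $\ell \ne p$ the principal units of $\O_K$ are uniquely $\ell$-divisible, so $\O_K^{\times,(\ell)}$ reduces to the $\ell$-primary part of $\mu_K$, which is finite; in particular $\ntl^u$ has finite image and vanishes for all but finitely many $\ell$. For $\ell = p$ the natural map $\O_K^\times \to \O_K^{\times,(p)}$ is an isomorphism modulo prime-to-$p$ torsion. My plan is then to define $\nt^u(t) \in T^{-1} \otimes \O_K^\times$ pointwise, using $\nt_p^u(t)$ to pin down the principal-unit part and the finitely many $\ntl^u(t)$ with $\ell \ne p$ to specify the torsion part, via the canonical decomposition $\mu_K = \bigoplus_\ell \mu_{K,\ell^\infty}$.

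The main obstacle is the consistency of this pointwise recipe, i.e.\ showing that the collection $(\ntl(t))_\ell$ actually lies in the image of the diagonal embedding
\[
  T^{-1} \otimes \O_K^\times \hookrightarrow \bigl( T^{-1} \otimes \O_K^{\times,(p)} \bigr) \oplus \bigoplus_{\ell \ne p} T^{-1} \otimes \mu_{K,\ell^\infty}.
\]
I expect this to follow from the common geometric origin of the $\ntl$'s, all of which come from boundary maps in $H^1_g$ built out of the weight filtration of Theorem~\ref{weight-fil}. The compatibility between the prime-to-$p$ \'etale and the $p$-adic realizations of that filtration, secured by the comparison theorems of Fontaine and Hyodo-Tsuji, is what should force the various $\ntl(t)$'s to glue into a single element of $T^{-1} \otimes K^\times$. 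Once this coherence is established, the $\Z$-linearity of $\nt$ and the identity $\nt \otimes \Zl = \ntl$ for every $\ell$ are formal consequences of the $\Zl$-linearity of each $\ntl$ and the injectivity of the diagonal map above.
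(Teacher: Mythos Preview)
Your decomposition strategy is sound, but the crucial gluing step in your final paragraph is not proved: you only \emph{expect} the tuple $(\ntl(t))_\ell$ to lie in the image of the diagonal, appealing vaguely to ``comparison theorems of Fontaine and Hyodo--Tsuji''. Those comparison results identify the $p$-adic \'etale realization with a filtered $(\varphi,N)$-module; they do not, on their own, produce a compatibility between the $p$-adic boundary map and the $\ell$-adic ones at the level of roots of unity in $K^\times$. As written, the torsion part of your construction is simply unaccounted for.

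The paper avoids this difficulty entirely by working, as it announces explicitly around Theorem~\ref{weight-fil} and repeats later, \emph{up to finite torsion and cotorsion}. Under that convention the problem dissolves: for $\ell\ne p$ the group $\O_K^{\times,(\ell)}$ is finite, hence discarded, so $K^{\times,(\ell)}$ collapses to its valuation component $\Zl$ and the preceding lemma gives $\ntl=N\otimes\Zl$ outright. The map $\nt$ is then forced: its valuation component is $N$, and its unit component is the restriction to $T^0$ of the unit part of $\nt_p$ (this makes sense because $1+\mathfrak{m}_K$ is already a $\Zp$-module, so $T^{-1}\otimes_{\Z}(1+\mathfrak{m}_K)\cong (T^{-1}\otimes\Zp)\otimes_{\Zp}(1+\mathfrak{m}_K)$). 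Checking $\nt\otimes\Zl=\ntl$ is then immediate: for $\ell=p$ it holds by construction, and for $\ell\ne p$ the unit contribution vanishes after tensoring with $\Zl$ since $1+\mathfrak{m}_K$ is uniquely $\ell$-divisible. No gluing or comparison argument is needed.
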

\begin{definition}{{\cite[Definition~1.10]{Bes-Ras17}}}
We define the toric intermediate Jacobian
$$\hT(X,k+1,r) := \coker \nt . $$
\end{definition}
Here, and it what follows, observe a shift of indexing \( k\to k+1 \) which
matches~\cite{Bes-Ras17} but defers from Section~\ref{sec:syntomic}.

We next define the toric regulator completed at $\ell$.
The \'etale regulator map,
\[
 \reg_\ell: \hmot(X,k+1,r)_0 \to
H_g^1(K,M_\ell(r)) 
\] 
(see Remark~\ref{toric-remark}),
factors, up to possibly multiplying by some integer to avoid finite torsion
and cotorsion issues, via $H_g^1(K,U^0 M_\ell(r)) $ (recall that \( U^{\bullet}\) is the
filtration defined in Theorem~\ref{weight-fil}) because
$H^0(K,\Ql(j))= H_g^1(K,\Ql(j))=0$ for $j<0$~\cite[Example~3.9]{Blo-Kat90}.
Applying the projection $U^0 \to M'_\ell$ we get, again after possibly
multiplying by some integer, 
$$ \reg_\ell^{\prime\prime}: \hmot(X,k+1,r)_0 \to H_g^1(K,M_\ell^\prime) \isom
\coker( T^0 \otimes \Zl \xrightarrow{\ntl} T^{-1} \otimes K^{\times(\ell)})
.
$$

The idea is now to construct the toric regulator in such a way that the
above regulator at the prime \( \ell \) is its \( \ell \)-completion. For
this, however, we need to recall the theory of the
Sreekantan regulator~\cite{Sre10}. Sreekantan's motivation was to obtain
analogues of Beilinson's conjectures in the function field case. He defines a 
``Deligne style cohomology'',
$ H_{\mathcal{D}}^{k+1}(X,\Q(r))$.
We do not recall the general definition
which involves similar ingredients to our definition of the lattices \( T^i
\). In fact, assuming completely degenerate reduction we have, for most
ranges of indices, an isomorphism~\cite[(2.2)]{Bes-Ras17},
\[
 H_{\mathcal{D}}^{k+1}(X,\Q(r))\isom \coker(T_{\Q}^0  \to T_{\Q}^{-1}) \;.
\] 
Sreekantan defines his Deligne cohomology in the relevant range as
higher Chow groups of the special fiber 
\begin{equation*}
  H_{\mathcal{D}}^{k+1}(X,\Q(r)) \isom CH^{r-1}(Y,2r-k-2)\otimes \Q\;.
\end{equation*}  
Assuming standard conjectures this is equivalent to a definition using the
Consani complex~\cite[Proposition~2.3]{Bes-Ras17}.
Sreekantan defines the ``regulator'',
  \begin{equation*}
    r_{\mathcal{D}}: \hmot(X,k+1,r) \to H_{\mathcal{D}}^{k+1}(X,\Q(r))\;,
  \end{equation*}
as the boundary map in higher Chow
groups.
  \begin{equation*}
    \hmot(X,k+1,r) \isom CH^r(X,2r-k-1)\xrightarrow{\partial}  CH^{r-1}(Y,2r-k-2)\otimes \Q\;.
  \end{equation*}
In~\cite{Bes-Ras17} we made the following conjectures.
\begin{conjecture}{{\cite[Conjecture~1]{Bes-Ras17}}}\label{theconj}
For each prime $\ell$ the valuation of the toric regulator at $\ell$ is the Sreekantan regulator tensored with $\Ql$.
\end{conjecture}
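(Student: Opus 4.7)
The plan is to prove the conjecture by matching two maps that both factor through a boundary or degeneration construction, and identifying these boundaries via the interplay between the weight filtration on $\ell$-adic étale cohomology and the localization sequence in higher Chow groups.

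First, I would unwind the valuation of the toric regulator at $\ell$. By construction, $\reg_\ell$ factors through $H_g^1(K, U^0 M_\ell(r))$, and projection to $M_\ell'$ yields the class $\reg_\ell''$ in $\coker(T^0 \otimes \Zl \xrightarrow{\ntl} T^{-1} \otimes K^{\times(\ell)})$. Applying the valuation on the $K^{\times(\ell)}$ factor, and invoking the identity $\val \circ \ntl = N \otimes \Zl$ recalled from \cite{Bes-Ras17}, one obtains a well-defined map $\val \circ \reg_\ell'' : \hmot(X,k+1,r)_0 \to \coker(T^0 \xrightarrow{N} T^{-1}) \otimes \Zl$, whose target after tensoring with $\Ql$ is precisely Sreekantan's $H_{\mathcal{D}}^{k+1}(X,\Q(r)) \otimes \Ql$.

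Second, I would identify this composition with the $\ell$-adic realization of the motivic boundary map. Sreekantan's regulator is the boundary $\partial: CH^r(X,2r-k-1) \to CH^{r-1}(Y,2r-k-2) \otimes \Q$ in the localization sequence for higher Chow groups. The goal is to match this with the $\ell$-adic boundary extracted above: a cycle class in $\het^{k+1}(X,\Zl(r))$, viewed in $M_\ell(r)$, projects onto the graded piece $\operatorname{gr}_{-1}^U M_\ell(r) = T^{-1} \otimes \Zl$, and I claim this projection coincides (modulo the image of $N$) with the étale cycle class on $Y$ of $\partial$ applied to the motivic class. This compatibility should follow from a diagram chase using the compatibility of $\ell$-adic cycle classes with Gysin and boundary maps in the localization triangle for the embedding $Y \hookrightarrow \mathcal{X} \hookleftarrow X$ of a regular model $\mathcal{X}$, together with the explicit description of $T^i$ as cohomology of a complex built from $CH^\bullet$ of the strata $\Ym$.

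The hard part will be two-fold. First, one must make the identification of $\operatorname{gr}^U_\bullet M_\ell(r)$ with the $T^i \otimes \Zl(-i)$ explicit enough to track cycle classes under the boundary map: for $\ell \ne p$ this uses the Rapoport-Zink weight spectral sequence, and together with the totally degenerate assumption it reduces everything to Chow groups of smooth projective $\bar{\rf}$-varieties, where the comparison should be more or less formal. For $\ell = p$ one needs the Mokrane-Hyodo-Tsuji analogue in log crystalline cohomology, and compatibility of the $p$-adic (syntomic) cycle class map with boundary maps to the special fiber, via the Nekov\'a\v{r}-Nizio{\l} framework; this is the main obstacle, since tracking weight-graded pieces of syntomic regulators through semi-stable comparison is delicate. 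A secondary subtlety is that Sreekantan's definition via higher Chow groups of $Y$ is equivalent to a Consani-complex presentation only under standard conjectures; the $\ell$-adic comparison is most naturally carried out on the Consani side, so one may need to first reformulate $r_{\mathcal{D}}$ in those terms.
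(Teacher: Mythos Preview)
The statement you are attempting to prove is labeled in the paper as a \emph{Conjecture}, not a theorem; it is cited from~\cite{Bes-Ras17} as Conjecture~1 and is explicitly left open. The paper does not contain a proof of it. Rather, immediately after stating the conjecture the paper says ``We show that this conjecture suffices to prove the existence of the toric regulator'' and then states Theorem~1.14 of~\cite{Bes-Ras17}, which \emph{assumes} the conjecture. So there is no proof in the paper against which to compare your proposal.

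As for the proposal itself: your outline is a reasonable sketch of what a proof would have to do, and you have correctly located the genuine obstruction. The reduction to comparing the boundary map in the localization sequence for higher Chow groups with the projection onto $\gr_{-1}^U M_\ell(r)$ is the natural strategy, and for $\ell\ne p$ the Rapoport--Zink spectral sequence together with the totally degenerate hypothesis does make this plausible, essentially because everything reduces to cycle classes on smooth projective varieties over $\bar\rf$. But the step you flag as ``the main obstacle'' --- compatibility of the $p$-adic (syntomic) cycle class with the boundary to the special fiber through the Mokrane--Hyodo--Tsuji weight filtration --- is not a routine diagram chase; it is precisely the content that makes this a conjecture rather than a theorem. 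Your proposal does not supply any new idea for that step beyond invoking the Nekov\'a\v{r}--Nizio\l\ framework, so as written it is a plan of attack rather than a proof.
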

We show that this conjecture suffices to prove the existence of the toric
regulator.
\begin{theorem}{{\cite[Theorem~1.14]{Bes-Ras17}}}
  Assuming Conjecture~\ref{theconj}, There exists a map, called the
  \emph{toric regulator},
  \begin{equation}\label{eq:treg}
  \hmot(X,k+1,r) \xrightarrow{\regt}  \hT(X,k+1,r)\;,
  \end{equation}
giving the toric regulator at $\ell$ for each $\ell$ by completion.
\end{theorem}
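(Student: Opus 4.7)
The plan is to construct $\regt(\xi)\in\coker(\nt)$ for $\xi\in\hmot(X,k+1,r)_0$ so that its $\ell$-adic completion recovers $\reg_\ell^{\prime\prime}(\xi)$ for every prime $\ell$ simultaneously, using Conjecture~\ref{theconj} as the essential gluing data. The starting point is the valuation short exact sequence
$$
  0\to \O_K^\times \to K^\times \xrightarrow{\val} \Z \to 0,
$$
which splits by the choice of a uniformizer $\pi$. Tensoring with $T^{-1}$ and quotienting by $\nt(T^0)$, and comparing with the analogous $\ell$-completed picture, one obtains a valuation quotient $\coker(\nt)\to\coker(N)$ whose kernel is the quotient of $T^{-1}\otimes\O_K^\times$ by the ``unit part'' of $\nt(T^0)$. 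The task thus reduces to pinning down a valuation component and a unit component that are mutually compatible with every $\reg_\ell^{\prime\prime}$.

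First, I would define the valuation component using Conjecture~\ref{theconj}: the composition $\val\circ\reg_\ell^{\prime\prime}(\xi)\in \coker(N)\otimes\Z_\ell$ equals $r_{\mathcal{D}}(\xi)\otimes\Z_\ell$ for every $\ell$, where $r_{\mathcal{D}}(\xi)\in\coker(N)\otimes\Q$ is the Sreekantan regulator, a single $\Q$-valued class independent of $\ell$. After absorbing the finite torsion and cotorsion issues (as in the rest of~\cite{Bes-Ras17}), this class represents the valuation of $\regt(\xi)$.

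Second, I would construct the unit component. For $\ell\ne p$, the group $\O_K^{\times(\ell)}$ is finite, so the unit part of $\reg_\ell^{\prime\prime}(\xi)$ is torsion and places no non-trivial constraint on a rational lift. At $\ell=p$ the group $\O_K^\times$ injects into $\O_K^{\times(p)}$ with cokernel also captured by a finite piece, and the unit part of $\reg_p^{\prime\prime}(\xi)$ specifies the component of $\regt(\xi)$ in $T^{-1}\otimes\O_K^\times$ uniquely modulo the image of $\nt(T^0)$. Combining these through the splitting $K^\times\isom \pi^\Z\oplus \O_K^\times$ produces an element of $\coker(\nt)$, and its $\ell$-adic completion agrees with $\reg_\ell^{\prime\prime}(\xi)$ by construction on units and by Conjecture~\ref{theconj} on valuations.

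The main obstacle I foresee is ensuring that $r_{\mathcal{D}}(\xi)$ is integral in $\coker(N)$ rather than merely rational; this holds only after the usual torsion and cotorsion corrections, which is why the paper is careful to phrase everything up to multiplication by a suitable integer. A secondary technical point is checking independence of the chosen uniformizer $\pi$: replacing $\pi$ by $\pi'=u\pi$ alters the splitting by $u\in\O_K^\times$, and the resulting discrepancy must be shown to lie in $\nt(T^0)$, a compatibility that ultimately rests on the relation between $\nt$ and its valuation $N$ recorded in the preceding lemma.
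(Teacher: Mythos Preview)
The present paper does not supply a proof of this theorem; it is quoted verbatim from~\cite[Theorem~1.14]{Bes-Ras17} and used as a black box in the exposition of Section~\ref{sec:toric}. There is therefore no ``paper's own proof'' to compare against here.

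That said, your strategy is the natural one and is, in outline, what one expects the argument in~\cite{Bes-Ras17} to be: decompose $K^\times$ into its valuation and unit parts, observe that for $\ell\ne p$ the $\ell$-completed unit group $\O_K^{\times(\ell)}\cong\mu(K)_{(\ell)}$ is finite (hence imposes no constraint modulo torsion), and use Conjecture~\ref{theconj} to pin the family of $\ell$-adic valuations to a single rational class via the Sreekantan regulator. The unit information is then supplied entirely by the case $\ell=p$.

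Two small points. First, the map $\O_K^\times\to\O_K^{\times(p)}$ is \emph{surjective} with finite kernel (the prime-to-$p$ roots of unity), not injective with finite cokernel as you wrote; this does not affect the argument, since either way the $p$-adic unit part determines an element of $\O_K^\times$ up to finite ambiguity, which is absorbed into the standing ``up to torsion and cotorsion'' convention. Second, the passage from a class in $\coker(N)$ together with a unit to a well-defined class in $\coker(\nt)$ is not automatic: a change of lift $v\mapsto v+N(t)$ in $T^{-1}$ must be compensated by a change $u\mapsto u\cdot\nt_u(t)$ in the unit part, and one has to check that the $p$-adic regulator already encodes this compatibility. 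You flag this at the end under the guise of uniformizer-independence, and it is indeed the crux; the resolution is that $\reg_p^{\prime\prime}(\xi)$ is itself a class in $\coker(\nt\otimes\Z_p)$, so any lift of it to $T^{-1}\otimes K^{\times(p)}$ already carries both the valuation and unit data in a compatible way, and Conjecture~\ref{theconj} is precisely what lets one replace the $\Z_p$-valued valuation by an integral one without disturbing that compatibility.
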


The toric regulator, when it exists, is related with the syntomic regulator
of Section~\ref{sec:syntomic}. The relation is roughly that
the syntomic regulator is the logarithm of the toric regulator. One reason
to expect such a relation, is the case of zero cycles. In this case, the
toric regulator is just the Albanese map into an appropriate \( p \)-adic
uniformization of the Albanese, while the syntomic regulator is the
Albanese map composed with the logarithm of the Albanese variety, which is
compatible with the logarithm on the torus uniformizing the Albanese.
In general, we have the following.
\begin{theorem}[{\cite[Theorem~4.6]{Bes-Ras17}}]\label{torislogsyn}
  Let \( X \) be a variety with totally degenerate reduction over \( K \),
  \( k,r \) positive integers,
  and let \( T^{-1}  \) and \( T^0 \) be as in Theorem~\ref{weight-fil}. Let
$V=H_{\et}^{k}(X\otimes_K \bar{K}, \Qp(r))$. Then the toric regulator at \(
  p\) exists, there is a canonical map
  \[
      \hst^1(K,V) \to 
      \DR(V)/(F^0+ T^0\otimes \Qp)
  \]  and we have a
commuting diagram
  \begin{equation*}
    \xymatrix{
      {\hmot(X,k+1,r)_0} \ar[r]^{{\rsyn}} \ar[d] & \hst^1(K,V) \ar[r] &
      \DR(V)/(F^0+ T^0\otimes \Qp) \ar[d] \\
      {\hT(X,k+1,r)} \ar[rr]^{\log} && T^{-1}\otimes K / T^0\otimes \Qp .
    }
  \end{equation*}
\end{theorem}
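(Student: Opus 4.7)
The plan is to build the canonical map $\hst^1(K,V) \to \DR(V)/(F^0+T^0\otimes\Qp)$ by a refinement of the splitting technique of Proposition~\ref{twocansplit}, then use it both to construct the toric regulator at $p$ and to establish commutativity of the diagram. First I would invoke the results of Mokrane-Hyodo-Tsuji to identify, under the totally degenerate reduction hypothesis, the weight $0$ and weight $-2$ pieces of $D=\Dst(V)$ with $T^0\otimes_{\Z}\Qp$ and $T^{-1}\otimes_{\Z}\Qp(1)$ respectively, in a way compatible with the monodromy operator $N$ of Section~\ref{sec:syntomic} and the map of~\eqref{monmap}. This places copies of $T^0\otimes\Qp$ inside $\DR(V)$ via the Hyodo-Kato isomorphism $I_\pi$, up to the change-of-uniformizer formula~\eqref{chofuni}.

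Next, given a class in $H^1(\cst(V))$ represented by a triple $(x,y,z)$, I would imitate the reduction in the proof of Proposition~\ref{twocansplit}: since $\varphi-1$ is invertible on every $D^k$ with $k\neq 0$, subtracting a suitable coboundary achieves $x\in D^0$. The element $z$ is then determined modulo $F^0$ and modulo the residual freedom of adding further coboundaries coming from $w_0\in D^0$, which shift $z$ by $-I_\pi(w_0)\in T^0\otimes\Qp \subset \DR(V)$. This yields a well-defined map to $\DR(V)/(F^0+T^0\otimes\Qp)$. To see that this map is canonical, I would use~\eqref{chofuni}: replacing $\pi$ by $\pi'$ changes the embedding of $T^0\otimes\Qp$ by terms in $\log(\pi'/\pi)\cdot I_\pi(N(D^0))\subset I_\pi(D^{-2})$, and these terms, together with the induced adjustment in the reduction of $(x,y,z)$, must match so that the image in the quotient is unaffected.

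With this map in hand, the toric regulator at $p$ is defined as the composition of $\rsyn$ with the new map, followed by the identification of $\DR(V)/(F^0+T^0\otimes\Qp)$ with $T^{-1}\otimes K/T^0\otimes\Qp$. This identification is built from the observation that, after quotienting by $F^0$, the relevant content of $\DR(V)$ is captured by the weight $-2$ piece, canonically isomorphic to $T^{-1}\otimes_{\Qp} K$ through $I_\pi$, with the subspace $T^0\otimes\Qp$ sitting inside via $I_\pi\circ N$. Commutativity of the diagram then boils down to checking at $\ell=p$ that the two constructions of the boundary map $\nt$ — one from the toric side via the connecting homomorphism in Galois cohomology, the other from the syntomic side via the weight reduction above — agree, both being expressions of the same Mokrane-Hyodo-Tsuji weight structure on $D$.

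The main obstacle will be verifying canonicity, i.e., that the constructed map is genuinely independent of the auxiliary uniformizer $\pi$. By Theorem~\ref{synderi}, varying $\pi$ deforms the analogous splitting $\mysplit_\pi$ by $-I_\pi\circ \myp$, and in the totally degenerate case $\myp$ lands naturally in $D^{-2}$; one must show that after passing to the quotient by $F^0+T^0\otimes\Qp$ this derivative is trivial, which amounts to showing that $I_\pi N(T^0\otimes\Qp)$ is absorbed into the subspace we quotient by — a statement that ties the monodromy structure on $D$ back to the integral lattices $T^\bullet$ in a precise way. A secondary delicate point is the identification of the target with $T^{-1}\otimes K/T^0\otimes\Qp$, which requires carefully tracking the $\log$ on $K^\times$ through the Fontaine-theoretic identifications and confirming compatibility with the toric intermediate Jacobian structure.
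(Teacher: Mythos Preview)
The present paper does not prove this theorem; it is quoted verbatim from \cite[Theorem~4.6]{Bes-Ras17} and stated without argument, so there is no proof here to compare your proposal against.

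On the substance of your outline, the construction of the map $\hst^1(K,V)\to \DR(V)/(F^0+T^0\otimes\Qp)$ by the cocycle-reduction technique of Proposition~\ref{twocansplit} is the natural approach and is in keeping with the methods of Section~\ref{sec:syntomic}. Two points, however, are off. First, you propose to \emph{define} the toric regulator at $p$ as the composite along the top and right of the diagram; but the toric regulator at $p$ already has an independent definition earlier in Section~\ref{sec:toric}, namely as $\reg_p''$ built from the $p$-adic \'etale regulator and the projection $U^0 M_p(r)\to M_p'$. The content of the theorem is that this independently constructed object makes the square commute. Taking your route would render commutativity a tautology and bypass the actual statement. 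Second, you treat the right vertical arrow as an ``identification'', but $\DR(V)/(F^0+T^0\otimes\Qp)$ is not in general isomorphic to $T^{-1}\otimes K/T^0\otimes\Qp$; the arrow is a further projection, and one has to check that the image of $\rsyn$ under it matches $\log$ of the toric regulator. That comparison, which hinges on the compatibility of the Bloch--Kato exponential with the weight filtration of Theorem~\ref{weight-fil} and with the boundary map $\tilde N_p$, is the genuine work, and your outline does not engage with it.

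A smaller issue: your canonicity argument asserts that the $\log(\pi)$-variation is absorbed by $F^0+T^0\otimes\Qp$, but by Theorem~\ref{synderi} that variation lies in $I_\pi(D^{-2})\cong T^{-1}\otimes K$, which is not contained in $F^0+T^0\otimes\Qp$. What actually happens is that the subspace $T^0\otimes\Qp\subset \DR(V)$ itself moves with $\pi$ via~\eqref{chofuni}, and the two variations must be shown to cancel; this is more delicate than what you wrote.
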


An obvious expectation from the result above is that the discrete part of
the syntomic regulator, which is the derivative with respect to \( \log
(\pi) \) of the continuous part, should be related to the Sreekantan
regulator. Let us briefly indicate how this can be seen in the case of \(
K_2 \) of curves. As mentioned after Theorem~\ref{k2sstwo},  the expression for the cup product of
the  (continuous part of the) regulator with a holomorphic form \( \omega
\) given there is a special case of a
more general expression valid for any form of the second kind.
The expression we need is found at Proposition~3.7 in~\cite{Bes18} which is the same as the expression for
  the regulator by the main
  theorem~\cite[Theorem~1.2]{Bes18}. This expression is
  \begin{equation*}
    \sum_v \pair{\log(f),F_\omega;\log(g)}_{{(U_v-Z)^\dagger}} - \sum_e c_\omega(e)\cdot \pair{\log(f),\log(g)}_e\;.
  \end{equation*}
  Here, $Z$ is a subset containing all the singularities of $f$ and
  $g$. The first term in this expression involves the triple index, which
  we do not explain here. In~\cite{Bes-Ras17} we pointed out that in the
  totally degenerate reduction case,  when all the components of the reduction are
  projective lines, this term vanishes because all those triple
  indices vanish
  by~\cite[Proposition~8.4]{Bes-deJ02}. This led to a formula for the
  regulator, which we proved is the logarithm of the P\'al regulator,
  consistent with Theorem~\ref{torislogsyn}. Without assuming completely degenerate reduction, it
  is still true that this first term is independent of the choice of
  branch. Recall that the formula applies to cup products with forms which are in the kernel of \( N \). By~\eqref{cdeplog} we see that for such forms \( c_\omega(e) \) is independent of the branch. By Theorem~\ref{synderi} we see that \[
    \rsyn^d(\{f,g\}) \cup \omega = 
   \sum_e
    c_\omega(e)\cdot \frac{d}{d \log (\pi)}\pair{\log(f),\log(g)}_e\;.
  \] 
 By Definition~\ref{tworegdef}, \( \rsyn^d \) lands in \( D^{-2}  \), which, in our case, is the
 weight \( 2 \) subspace of \( \hdr^1(X / K) \) (more precisely, it is a \( K_0
 \) subspace). This subspace is mapped isomorphically by \( N \) on the
 weight \( 0 \) space, which is isomorphic to $\hh(\Gamma,K)$. In addition, it is
 also dual to the weight \( 0 \) subspace via the cup product pairing.
 By~\cite[Corollary~5.3]{Bes17} the pairing \[
   \hh(\Gamma,K)\times \hh(\Gamma,K) \to K\;,\quad (x,y)\mapsto x\cup N^{-1}y\;, 
 \] 
 is none other than the pointwise scalar product \( \sum_e x(e)\cdot y(e)
 \). Thus, assuming that the main result of~\cite{Bes18} holds without
 restrictions on \( \omega \), or that the cochains \( c_\omega \) suffice
 to cover all of \( \hh(\Gamma,K) \), as is the case for example for
 Mumford curves, we get the conjectural formula \[
  N \rsyn^d(\{f,g\}) = \left(e\mapsto 
    \frac{d}{d \log (\pi)}\pair{\log(f),\log(g)}_e
  \right)\;.
 \] 
Let us see how this matches with the conjecture that \( \rsyn^d \) factors
via 
the Sreekantan regulator. This map
\begin{equation*}
  K_2(X) \to \hh(\Gamma,\Z)
\end{equation*} 
is derived from the boundary map in
K-theory of the integral model \( \mathcal{X} \), which in this case is
given by tame symbols at the irreducible components of the reduction,
\begin{equation*}
 \{f,g\}  \mapsto (v\mapsto h_v = t_{T_v}(f,g))
\end{equation*} 
followed by the map that send a collection
\[
(v\mapsto h_v\in\kappa(T_v)^{\times
})\text{ such that } \sum_v \operatorname{div}(h_v)=0 \text{ on } \mathcal{X}
\]
to
\begin{equation*}
 e \mapsto \ord_e
 \left( h_{e^+} \right)\;,
\end{equation*}
where here \( e \) is viewed as the intersection point of the components \(
T_{e^+}\) and \( T_{e^-} \) and the condition implies that \( \ord_{e}
h_{e^+} = - \ord_e h_{e^-} \).
To see how the Sreekantan regulator is related with \( \rsyn^d \) it
suffices to prove the following.
\begin{lemma}
  For any two rational functions \( f,g \) on \( X \) and any annulus \( e \) we have
 \begin{equation*}
    \frac{d}{d \log (\pi)}\pair{\log(f),\log(g)}_e
   = \ord_e
 \left( h_{e^+} \right)\;.
 \end{equation*} 
\end{lemma}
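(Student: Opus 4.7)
The plan is to compute both sides in local coordinates at the node of $\mathcal{X}$ that corresponds to the annulus $e$, and to compare them term by term.

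First I would pick local coordinates $(x,y)$ at this node so that $xy = \pi$ cuts out the special fiber, $T_{e^+} = \{x = 0\}$ and $T_{e^-} = \{y = 0\}$. Write $f = u\,x^a y^c$ and $g = v\,x^b y^d$ with $u, v$ units in the local ring, so that $a = \ord_{T_{e^+}}(f)$, $c = \ord_{T_{e^-}}(f)$, $b = \ord_{T_{e^+}}(g)$, $d = \ord_{T_{e^-}}(g)$. A direct evaluation of the tame symbol gives
\[
h_{e^+} = (-1)^{ab}\,\tfrac{f^b}{g^a}\Big|_{x=0} = (\text{unit at the node}) \cdot y^{\,bc - ad},
\]
so $\ord_e(h_{e^+}) = bc - ad$; this identifies the right-hand side of the claim.

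For the left-hand side I would parametrize $e$ by $z = x$ (so $y = \pi/z$), giving the local expansions
\[
\log f = (a - c)\log z + c\log\pi + L_u(z), \qquad \log g = (b - d)\log z + d\log\pi + L_v(z),
\]
where $L_u, L_v$ are Coleman logarithms of units on the annulus whose Laurent coefficients are polynomials in $\pi$; hence $dL_u/d\log\pi = dL_v/d\log\pi = 0$ (treating $\log\pi$ as a formal branch parameter independent of $\pi$). Substituting into the definition of the local pairing $\pair{\cdot,\cdot}_e$ from~\cite{Bes98b} and using bilinearity, the only summands that survive after $d/d\log\pi$ are those containing the explicit $c\log\pi$ and $d\log\pi$ factors. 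These reduce to a linear combination of the two elementary pairings $\pair{\log z,1}_e$ and $\pair{1,\log z}_e$, whose values can be read off from the defining formula (they equal $\pm 1$, with the sign fixed by the orientation of $e$).

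Putting everything together should give
\[
\frac{d}{d\log\pi}\pair{\log f,\log g}_e = c(b - d) - d(a - c) = bc - ad = \ord_e(h_{e^+}).
\]
The hard part will be making this last paragraph rigorous: one has to extract the exact formula for $\pair{F,G}_e$ on an annulus from~\cite{Bes98b} (essentially an antisymmetric combination of the residues of $dF, dG$ against the constant terms of the power-series parts of $F, G$) and track the orientation-dependent sign carefully, so that the computation produces $bc - ad$ with the right sign rather than $ad - bc$. Once that bookkeeping is in place, the identity is an elementary algebraic verification, and independence from the extra freedom in choosing constants of integration for $\log f$ and $\log g$ follows because shifting by a constant only contributes a $\log\pi$-independent term to $\pair{\log f,\log g}_e$.
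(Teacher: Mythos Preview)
Your proposal is correct and is essentially the same argument as the paper's, carried out in explicit local coordinates rather than after a preliminary reduction. The paper first uses bilinearity to reduce to the case $\ord_{T_{e^+}}(f)=1$, $\ord_{T_{e^+}}(g)=0$, writes $f=\pi f_0$, and observes that $\pair{\log\pi,\log g}_e=\log(\pi)\,\res_e d\log g$, so that the derivative is $\res_e d\log g=\ord_e(g|_{T_{e^+}})$ by~\cite[Lemma~2.1]{Bes-Zer13}; the term $\pair{\log f_0,\log g}_e$ is checked to be $\log(\pi)$-independent. Your direct computation in the coordinates $z=x$, $y=\pi/z$ does exactly the same bookkeeping for general $a,b,c,d$ at once: the antisymmetry of the local index that you invoke is precisely what makes the cross-terms cancel in the paper's argument as well, and your formula $c(b-d)-d(a-c)=bc-ad$ is the general-case version of the paper's $\res_e d\log g$. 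The only point to watch is the one you already flag: your tame-symbol convention $h_{e^+}=(-1)^{ab}f^b/g^a$ is inverse to the one the paper uses (where $h_{e^+}=g|_{T_v}$ in the special case), so you must match that sign to the orientation convention in the definition of $\pair{\cdot,\cdot}_e$ from~\cite{Bes98b}.
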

\begin{proof}
It is enough to consider the case when
$ \ord_{T_v}(f) = 1$ while 
  $ \ord_{T_v}(g) = 0$, with \( v=e^+ \).
  Then locally on \( U_v \) near \( e \)
  we can write \( f=\pi f_0 \) and
  \begin{align*}
    \pair{\log (f),\log (g)}_e &=
    \pair{\log (f_0),\log (g)}_e + \pair{\log (\pi),\log (g)} \\
    &\phantom{=} \pair{\log (f_0),\log (g)}_e + \log (\pi)\res_e d \log (g)\;,
  \end{align*} so that
\begin{equation*}
  \frac{d}{d\log(p)}\pair{ \log(f) ,\log(g) }_e= \res_e d\log(g)=
  \ord_e(g|_{T_v})
\end{equation*} 
where, for this last equation it suffices to consider \( g \) which is a
  local equation for \( T_{e^-} \) and use~\cite[Lemma~2.1]{Bes-Zer13}. 
\end{proof}

\providecommand{\bysame}{\leavevmode\hbox to3em{\hrulefill}\thinspace}
\providecommand{\MR}{\relax\ifhmode\unskip\space\fi MR }
% \MRhref is called by the amsart/book/proc definition of \MR.
\providecommand{\MRhref}[2]{%
  \href{http://www.ams.org/mathscinet-getitem?mr=#1}{#2}
}
\providecommand{\href}[2]{#2}


\begin{thebibliography}{10}

\bibitem{Bal-Dog16}
J.~S. {Balakrishnan} and N.~{Dogra}, \emph{Quadratic {C}habauty and rational
  points, {I}: {$p$}-adic heights}, Duke Math. J. \textbf{167} (2018), no.~11,
  1981--2038, With an appendix by J. Steffen M\"{u}ller. \MR{3843370}

\bibitem{Bal-Dog17}
\bysame, \emph{Quadratic {C}habauty and rational points {II}: {G}eneralised
  height functions on {S}elmer varieties}, Int. Math. Res. Not. IMRN (2021),
  no.~15, 11923--12008. \MR{4294137}

\bibitem{BaBeMu12}
J.\thinspace{}S. Balakrishnan, A.~Besser, and J.~S. M\"{u}ller, \emph{Quadratic
  chabauty: $p$-adic heights and integral points on hyperelliptic curves}, J.
  Reine Angew. Math. \textbf{720} (2016), 51--79.

\bibitem{Ber07}
V.~G. Berkovich, \emph{Integration of one-forms on {$p$}-adic analytic spaces},
  Annals of Mathematics Studies, vol. 162, Princeton University Press,
  Princeton, NJ, 2007. \MR{2263704}

\bibitem{Bes98b}
A.~Besser, \emph{Syntomic regulators and {$p$}-adic integration. {II}. {$K_2$}
  of curves}, Proceedings of the {C}onference on {$p$}-adic {A}spects of the
  {T}heory of {A}utomorphic {R}epresentations ({J}erusalem, 1998), vol. 120,
  2000, pp.~335--359. \MR{1809627}

\bibitem{Bes99}
\bysame, \emph{Coleman integration using the {T}annakian formalism}, Math. Ann.
  \textbf{322} (2002), no.~1, 19--48.

\bibitem{Bes00}
\bysame, \emph{{$p$}-adic {A}rakelov theory}, J. Number Theory \textbf{111}
  (2005), no.~2, 318--371. \MR{2130113}

\bibitem{Bes10a}
\bysame, \emph{Heidelberg lectures on {C}oleman integration}, The Arithmetic of
  Fundamental Groups (Jakob Stix, ed.), Contributions in Mathematical and
  Computational Sciences, vol.~2, Springer Berlin Heidelberg, 2012, pp.~3--52.

\bibitem{Bes18}
\bysame, \emph{The syntomic regulator for {$K_2$} of curves with arbitrary
  reduction}, Arithmetic L-Functions and Differential Geometric Methods (Cham)
  (Pierre Charollois, Gerard Freixas~i Montplet, and Vincent Maillot, eds.),
  Springer International Publishing, 2021, pp.~75--89.

\bibitem{Bes17}
\bysame, \emph{p-adic heights and vologodsky integration}, Journal of Number
  Theory \textbf{239} (2022), 273--297.

\bibitem{Bes-deJ02}
A.~Besser and R.~de~Jeu, \emph{The syntomic regulator for ${K}_4$ of curves},
  Pacific Journal of Mathematics \textbf{260} (2012), no.~2, 305--380.

\bibitem{BMS21}
A.~Besser, J.~S. M\"{u}ller, and P.~Srinivasan, \emph{{$p$}-adic adelic metrics
  and {Q}uadratic {C}habauty {I}}, ArXiv preprint \textbf{arXiv:2112.03873}
  (2023).

\bibitem{BMS23}
\bysame, \emph{{$p$}-adic adelic metrics and {Q}uadratic {C}habauty {II}}, Work
  in progress, 2023.

\bibitem{Bes-Ras17}
A.~Besser and W.~Raskind, \emph{Toric regulators}, Arithmetic L-Functions and
  Differential Geometric Methods (Cham) (Pierre Charollois, Gerard Freixas~i
  Montplet, and Vincent Maillot, eds.), Springer International Publishing,
  2021, pp.~91--120.

\bibitem{Bes-Zer13}
A.~Besser and S.~Zerbes, \emph{Vologodsky integration on curves with
  semi-stable reduction}, Israel Journal of Mathematics \textbf{253} (2023),
  no.~2, 761--770.

\bibitem{Blo-Kat90}
S.~Bloch and K.~Kato, \emph{${L}$-functions and {T}amagawa numbers of motives},
  The {G}rothendieck Festschrift I (Boston), Prog. in Math., vol.~86,
  Birkh\"auser, 1990, pp.~333--400.

\bibitem{Cal-Sil93}
G.~S. Call and J~H. Silverman, \emph{Canonical heights on varieties with
  morphisms}, Compositio Math. \textbf{89} (1993), no.~2, 163--205.
  \MR{1255693}

\bibitem{CDS23}
B.~Chiarellotto, V.~Di~Proietto, and A.~Shiho, \emph{Comparison of relatively
  unipotent log de {R}ham fundamental groups}, Mem. Amer. Math. Soc.
  \textbf{288} (2023), no.~1430, v+111. \MR{4627087}

\bibitem{Col85}
R.~Coleman, \emph{Torsion points on curves and $p$-adic abelian integrals},
  Annals of Math. \textbf{121} (1985), 111--168.

\bibitem{Col-de88}
R.~Coleman and E.~de~Shalit, \emph{{$p$}-adic regulators on curves and special
  values of {$p$}-adic {$L$}-functions}, Invent. Math. \textbf{93} (1988),
  no.~2, 239--266. \MR{948100}

\bibitem{Col-Gro89}
R.~Coleman and B.~Gross, \emph{$p$-adic heights on curves}, Algebraic number
  theory (J.~Coates, R.~Greenberg, B.~Mazur, and I.~Satake, eds.), Advanced
  Studies in Pure Mathematics, vol.~17, Academic Press, Boston, MA, 1989,
  pp.~73--81. \MR{92d:11057}

\bibitem{ColIov99}
R.~Coleman and A.~Iovita, \emph{The {F}robenius and monodromy operators for
  curves and abelian varieties}, Duke Math. J. \textbf{97} (1999), no.~1,
  171--215. \MR{1682268 (2000e:14023)}

\bibitem{Col82}
R.~F. Coleman, \emph{Dilogarithms, regulators and {$p$}-adic {$L$}-functions},
  Invent. Math. \textbf{69} (1982), no.~2, 171--208. \MR{674400}

\bibitem{Col89}
\bysame, \emph{Reciprocity laws on curves}, Compositio Math. \textbf{72}
  (1989), no.~2, 205--235. \MR{1030142}

\bibitem{Colm96}
P.~Colmez, \emph{Int\'{e}gration sur les vari\'{e}t\'{e}s {$p$}-adiques},
  Ast\'{e}risque (1998), no.~248, viii+155. \MR{1645429}

\bibitem{Deg-Niz18}
F.~D\'eglise and W.~Nizio\l, \emph{On {$p$}-adic absolute {H}odge cohomology
  and syntomic coefficients. {I}}, Comment. Math. Helv. \textbf{93} (2018),
  no.~1, 71--131. \MR{3777126}

\bibitem{Grosse00}
E.~Grosse-Kl{\"o}nne, \emph{Rigid analytic spaces with overconvergent structure
  sheaf}, J. Reine Angew. Math. \textbf{519} (2000), 73--95.

\bibitem{Had11}
M.~Hadian, \emph{Motivic fundamental groups and integral points}, Duke Math. J.
  \textbf{160} (2011), no.~3, 503--565. \MR{2852368}

\bibitem{Hain87}
R.~M. Hain, \emph{Higher {A}lbanese manifolds}, Hodge theory ({S}ant {C}ugat,
  1985), Lecture Notes in Math., vol. 1246, Springer, Berlin, 1987, pp.~84--91.
  \MR{894044}

\bibitem{Hyo88}
O.~Hyodo, \emph{A note on {$p$}-adic \'etale cohomology in the semistable
  reduction case}, Invent. Math. \textbf{91} (1988), no.~3, 543--557.
  \MR{928497}

\bibitem{Jan88}
Uwe Jannsen, \emph{Continuous \'etale cohomology}, Math. Ann. \textbf{280}
  (1988), no.~2, 207--245.

\bibitem{Kat-Lit21}
E.~Katz and D.~Litt, \emph{$p$-adic iterated integration on semi-stable
  curves}, ArXiv preprint \textbf{arXiv:2202.05340} (2022).

\bibitem{Kim05}
M.~Kim, \emph{The motivic fundamental group of {$\mathbb{P}^1-\{0,1,\infty\}$}
  and the theorem of {S}iegel}, Invent. Math. \textbf{161} (2005), no.~3,
  629--656. \MR{2181717 (2006k:11119)}

\bibitem{Kim09}
\bysame, \emph{The unipotent {A}lbanese map and {S}elmer varieties for curves},
  Publ. Res. Inst. Math. Sci. \textbf{45} (2009), no.~1, 89--133. \MR{2512779
  (2010k:14029)}

\bibitem{KimHain04}
M.~Kim and R.~M. Hain, \emph{A de {R}ham-{W}itt approach to crystalline
  rational homotopy theory}, Compos. Math. \textbf{140} (2004), no.~5,
  1245--1276. \MR{2081156}

\bibitem{KimHain05}
\bysame, \emph{The {H}yodo-{K}ato theorem for rational homotopy types}, Math.
  Res. Lett. \textbf{12} (2005), no.~2-3, 155--169. \MR{2150873}

\bibitem{ManDri73}
Yu. Manin and V.~Drinfeld, \emph{Periods of {$p$}-adic {S}chottky groups}, J.
  Reine Angew. Math. \textbf{262/263} (1973), 239--247, Collection of articles
  dedicated to Helmut Hasse on his seventy-fifth birthday. \MR{0396582}

\bibitem{Mok93}
A.~Mokrane, \emph{La suite spectrale des poids en cohomologie de
  {H}yodo-{K}ato}, Duke Math. J. \textbf{72} (1993), no.~2, 301--337.
  \MR{1248675 (95a:14022)}

\bibitem{Mon-Was68}
P.~Monsky and G.~Washnitzer, \emph{Formal cohomology. {I}}, Ann. of Math. (2)
  \textbf{88} (1968), 181--217.

\bibitem{Nek93}
J.~Nekov{\'a}{\v{r}}, \emph{On $p$-adic height pairings}, S\'eminaire de
  Th\'eorie des Nombres, Paris, 1990--91, Birkh\"auser Boston, Boston, MA,
  1993, pp.~127--202. \MR{95j:11050}

\bibitem{Nek-Niz14}
J.~Nekov\'a{\v r} and W.~Nizio\l, \emph{Syntomic cohomology and {$p$}-adic
  regulators for varieties over {$p$}-adic fields}, Algebra Number Theory
  \textbf{10} (2016), no.~8, 1695--1790, With appendices by Laurent Berger and
  Fr\'ed\'eric D\'eglise. \MR{3556797}

\bibitem{Pal10}
A.~P{\'a}l, \emph{A rigid analytical regulator for the {$K_2$} of {M}umford
  curves}, Publ. Res. Inst. Math. Sci. \textbf{46} (2010), no.~2, 219--253.
  \MR{2722778 (2011m:19002)}

\bibitem{RapZin82}
M.~Rapoport and Th. Zink, \emph{\"{U}ber die lokale {Z}etafunktion von
  {S}himuravariet\"{a}ten. {M}onodromiefiltration und verschwindende {Z}yklen
  in ungleicher {C}harakteristik}, Invent. Math. \textbf{68} (1982), no.~1,
  21--101. \MR{666636}

\bibitem{Ras95}
W.~Raskind, \emph{Higher $l$-adic {A}bel-{J}acobi mappings and filtrations on
  {C}how groups}, Duke Math. J. \textbf{78} (1995), 33--57.

\bibitem{Ras-Xar07}
W.~Raskind and X.~Xarles, \emph{On the \'etale cohomology of algebraic
  varieties with totally degenerate reduction over {$p$}-adic fields}, J. Math.
  Sci. Univ. Tokyo \textbf{14} (2007), no.~2, 261--291. \MR{2351367
  (2009h:14034)}

\bibitem{Shi00}
A.~Shiho, \emph{Crystalline fundamental groups and {$p$}-adic {H}odge theory},
  The arithmetic and geometry of algebraic cycles ({B}anff, {AB}, 1998), CRM
  Proc. Lecture Notes, vol.~24, Amer. Math. Soc., Providence, RI, 2000,
  pp.~381--398. \MR{1738868}

\bibitem{Sre10}
R.~Sreekantan, \emph{Non-{A}rchimedean regulator maps and special values of
  {$L$}-functions}, Cycles, motives and {S}himura varieties, Tata Inst. Fund.
  Res. Stud. Math., Tata Inst. Fund. Res., Mumbai, 2010, pp.~469--492.
  \MR{2906033}

\bibitem{Tsu99}
T.~Tsuji, \emph{{$p$}-adic \'etale cohomology and crystalline cohomology in the
  semi-stable reduction case}, Invent. Math. \textbf{137} (1999), no.~2,
  233--411. \MR{1705837}

\bibitem{Vol01}
V.~Vologodsky, \emph{Hodge structure on the fundamental group and its
  application to {$p$}-adic integration}, Mosc. Math. J. \textbf{3} (2003),
  no.~1, 205--247, 260. \MR{1996809}

\bibitem{Zar96}
Yu.~G. Zarhin, \emph{{$p$}-adic abelian integrals and commutative {L}ie
  groups}, J. Math. Sci. \textbf{81} (1996), no.~3, 2744--2750, Algebraic
  geometry, 4. \MR{1420227}

\bibitem{Zhan95}
S.~Zhang, \emph{Small points and adelic metrics}, J. Algebraic Geom. \textbf{4}
  (1995), no.~2, 281--300. \MR{1311351}

\end{thebibliography}
\end{document}